\theoremstyle{definition}
\newtheorem{definition}{Definition}[section]
\newtheorem{remark}{Remark}[section]
\newtheorem{example}{Example}[section]
\newtheorem{lemma}{Lemma}[section]
\newtheorem{theorem}{Theorem}[section]
\newtheorem{proposition}[theorem]{Proposition}
\newtheorem{corollary}{Corollary}[section]
\newtheorem{claim}{Claim}[section]
\newtheorem{problem}{Problem}[section]
\begin{document}

\title{A relation on trees and the topological indices based on subgraph\footnote{This work is supported
by NSFC Grant No. 11971274, 12061074, 11671344.}}
\author{
{\small Rui Song$^{a}$,\ \ Qiongxiang Huang$^{a,}$\footnote{Corresponding author.
\newline{\it \hspace*{5mm}Email addresses:} huangqx@xju.edu.cn(Q.X. Huang).}\ \  }\\[2mm]
\footnotesize $^a$College of Mathematics and Systems Science, Xinjiang University, Urumqi, Xinjiang 830046, China\\
}
\date{}
\maketitle

{\flushleft\large\bf Abstract:}
A topological index reflects the physical, chemical and structural properties of a molecule, and its study has an important role in molecular topology, chemical graph theory and mathematical chemistry. It is a natural problem to characterize non-isomorphic graphs with the same  topological index value. By introducing  a relation on trees with respect to edge division vectors, denoted by $\langle\mathcal{T}_n, \preceq \rangle$, in this paper we give some results for  the relation order in $\langle\mathcal{T}_n, \preceq \rangle$, it allows us to compare the size of the topological index value without relying on the specific forms of them, and naturally we can determine which trees have the same topological index value. Based on these results we  characterize some classes of trees that are uniquely determined by their edge division vectors and  construct infinite classes of non-isomorphic trees with the same  topological index value, particularly such trees of order no more than $10$ are completely determined.

\begin{flushleft}
\textbf{Keywords:} Topological index; Edge division vector; Isomorphic tree

\end{flushleft}
\textbf{AMS Classification:} 05C50
\section{Introduction}
A topological index is usually defined by some graph invariants,  such as the number of vertices, the number of edges, vertex degree, degree sequence, matching number, etc \cite{Todeschini}. In the fields of bioinformatics, molecular topology, chemical graph theory and mathematical chemistry, a topological index is a type of molecular descriptors that are calculated based on the molecular graph of a chemical compound.

In the field of chemistry, it is common to calculate topological indices of the molecular graph in order to figure out the physical or chemical properties of a molecule statistically.
The inverse problem of topological indices is proposed by X. Li et al. in \cite{Li} as follows: given an index value, one wants to design chemical compounds (given as graphs or trees) having that index value, it is not necessary  to obtain all the isomorphic graphs. For more results on this topic, readers may  refer to \cite{Lang}. As the research developed, some researchers  tried to use some kind of topological index or a few topological indices for classification based on isomorphism \cite{Dehmer1,Dehmer2,Dehmer3}.
The main problem of classification based on isomorphism is that the topological indices may be identical even for two or several non-isomorphic graphs and  the situation becomes worse with the increment of vertices of graph \cite{Dehmer4}. There are some similar inverse problem such as finding and constructing of cospectral graphs \cite{Schwenk} and equienergetic graphs \cite{Brankov,Bonif,Indulal,Indulal-0,Tura}.

In \cite{Guo}, X. Guo and M. Randi\'{c}  characterized some classes of  trees with the same $JJ$ index. Recently, D. Vuki\v{c}evi\'{c} and J. Sedlar in \cite{Vuki} introduced a relation order on trees with respect to edge division vector. They also gave the relationship between  edge division vector and the topological index on the class of trees, which enables us to simply calculate the topological indices of  trees by their edge division vectors. In \cite{Song}, Song and Huang et al. gave a new criterion to determine the order of trees with respect to the edge division vectors. Based on these results a large of extremal trees are determined including old and new, one can refer to Table 3 in \cite{Song}.

In this paper, we focus on considering the problem of characterizing the non-isomorphic trees with the same topological index value without depending on the individual form of topological index.
In Section 2, we introduce some notions and symbols and give two lemmas to determine a kind of relation order on trees by using edge division vector. Further, we obtain tree conditions for determining the increase and decrease of  topological indices.
In Section 3, we give a graph transformation to construct the trees that have the same edge division vector. Moreover  we find some sufficient conditions to determine wether such trees are isomorphic or not. Based on these conditions, we can simply produce infinite families of non-isomorphic  trees that have the same edge division vector.
In Section 4, we give several classes of trees which are uniquely determined by edge division vectors.
In Section 5, we characterize some classes of  non-isomorphic trees with the same edge division vector. In particular,  all the pairs of non-isomorphic  trees of order no more than $10$ with the same edge division vector are classified.
Based on above results, in Section 6, without relying on the specific form of individual topological index we give some classes of non-isomorphic trees with the same topological index value.

\section{Preliminaries}
Let $G=(V(G),E(G))$ be a simple connected graph with vertex set $V(G)$ and edge set $E(G)$. For a vertex $v\in V(G)$, let $N_G(v)$ denote the set of neighbors of $v$ and the degree $d_G(v)=|N_G(v)|$. For a pair  of vertices $u,v\in V(G)$, denote by $d_G(u,v)$ the length of the shortest path connecting vertices $u$ and $v$. Usually we will write only $d(v), N(v)$ and $d(u,v)$ when it does not lead to confusion.

A connected graph is a tree, denoted by $T$, if it has no cycle. Let $S_n$ and $P_n$ denote the star and the path with order $n$, respectively. A vertex $v$ is a pendent vertex or leaf if $d_T(v)=1$ and  a branching vertex if $d_T(v)\geq 3$. The tree $T-v$ is defined by removing the vertex $v$ and deleting all edges incident to $v$ from $T$, and the tree $T-e$ is defined by removing the edge $e$ of $T$.
For a tree $T$ of order $n$, let $e=uv\in E(T)$. By  $T_u(e)$ and $T_v(e)$ we will denote the two components of $T-e$ containing $u$ and $v$, respectively. We denote $n_u(e)=|T_u(e)|$ and $n_v(e)=|T_v(e)|$. Furthermore,  we define an edge function $\mu_T(e)=\min\{n_u(e),n_v(e)\}$ and simply write as $\mu(e)$ if without confusion. By definition  we have $n_u(e)+n_v(e)=n$ and $\mu(e)\leq \lfloor\frac{n}{2}\rfloor$.

Let $\mathcal{T}_n$ denote the set of  trees on $n$ vertices. For a tree $T\in \mathcal{T}_n$, let $r_i(T)$ denote the number of edges in $T$ for which $\mu(e)=i$, i.e., $r_i(T)=|\{e\in E(T)\mid \mu(e)=i\}|$. It is clear that $r_1(T)$ is just the number of pendent edges and  $r_i(T)=0$ for every $i>\lfloor\frac{n}{2}\rfloor$ due to  $\mu(e)\leq \lfloor\frac{n}{2}\rfloor$. The edge division vector $\mathbf{r}(T)$ is defined as a vector
$\mathbf{r}(T)=(r_1(T),r_2(T),\ldots,r_{\lfloor\frac{n}{2}\rfloor}(T))$. We will write  $\mathbf{r}$ and $r_i$ when it does not lead to confusion.
Recently, D. Vuki\v{c}evi\'{c} and J. Sedlar in \cite{Vuki}  defined the order of edge division vectors:  two edge division vectors $\mathbf{r}$ and $\mathbf{r}'$ of trees $T$ and $T'$ in $\mathcal{T}_n$, respectively, have a relation, denoted by
$\mathbf{r}\preceq \mathbf{r}'$, if the inequality $\sum\limits_{i=k}^{\lfloor\frac{n}{2}\rfloor}r_i\leq \sum\limits_{i=k}^{\lfloor\frac{n}{2}\rfloor}r'_i$
holds for every $k=1,2,\ldots,\lfloor\frac{n}{2}\rfloor$. If the inequality is strict for at least one $k$, then we say that $\mathbf{r}\prec \mathbf{r}'$. Naturally, we define  $T\preceq T'$ if $\mathbf{r}\preceq \mathbf{r}'$ ($T\prec T'$ if $\mathbf{r}\prec \mathbf{r}'$). Specially, we denote by $T\approx T'$ if $\mathbf{r}=\mathbf{r'}$. Thus the trees of $\mathcal{T}_n$ is a set defined with the relation ``$\preceq $", which is denoted by $\langle\mathcal{T}_n, \preceq \rangle$. Two trees $T, T'$ are called EDV-equivalent trees if $T\approx T'$. However, $T\approx T'$ does not imply $T\cong T'$ (to see Figure \ref{fig-3} for example). A tree $T$ is said to be determined by its edge division vector $\mathbf{r}(T)$ (DEDV for short) if,
for any $T'\in \mathcal{T}_n$, we have $T'\cong T$ whenever $T\approx T'$.

First we give two lemmas to determine  the order on $\langle\mathcal{T}_n, \preceq \rangle$, from which we will characterize
the EDV-equivalent trees and DEDV-trees.

For $T,T'\in \mathcal{T}_n$, let $\varphi:E(T)\longrightarrow E(T')$ be a bijection. $T$ and $T'$ are said to be $(\varphi,\mu)$-similar with respect to $e_1\in E(T)$ if   $\mu_{T}(e)=\mu_{T'}(\varphi(e))$ for any $e\not=e_1$. We start with a lemma which is given in \cite{Song}.

\begin{lemma}[\cite{Song}]\label{lem-5}
Suppose that  $T, T'\in \mathcal{T}_n$  are $(\varphi,\mu)$-similar with respect to $e_1$, and $\varphi(e_1)=e_1'$. We have\\
(1) If $\mu_{T}(e_1)<\mu_{T'}(e_1')$, then $T\prec T'$; \\
(2) If $\mu_{T}(e_1)>\mu_{T'}(e_1')$, then $T\succ T'$;\\
(3) If $\mu_{T}(e_1)=\mu_{T'}(e_1')$, then $T\approx T'$.
\end{lemma}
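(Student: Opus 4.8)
The plan is to compare the two edge division vectors coordinate by coordinate: I will show that $\mathbf{r}(T)$ and $\mathbf{r}(T')$ agree in every entry except possibly at the indices $a:=\mu_T(e_1)$ and $b:=\mu_{T'}(e_1')$, and then read off the relation ``$\preceq$'' directly from the tail sums $\sum_{i=k}^{\lfloor\frac{n}{2}\rfloor} r_i$ that define it. The starting point is the $(\varphi,\mu)$-similarity: since $\varphi$ is a bijection of edge sets with $\varphi(e_1)=e_1'$ and $\mu_T(e)=\mu_{T'}(\varphi(e))$ for all $e\neq e_1$, its restriction to $E(T)\setminus\{e_1\}$ is a $\mu$-preserving bijection onto $E(T')\setminus\{e_1'\}$. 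Writing $c_i$ for the common number of such edges having $\mu$-value $i$, I obtain $r_i(T)=c_i+[i=a]$ and $r_i(T')=c_i+[i=b]$ for every $i$, where $[\cdot]$ denotes the Iverson bracket. Part (3) is then immediate: if $a=b$ the two formulas coincide, so $\mathbf{r}(T)=\mathbf{r}(T')$ and $T\approx T'$.

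For part (1) I would assume $a<b$ and form the tail sums $R_k:=\sum_{i=k}^{\lfloor\frac{n}{2}\rfloor} r_i(T)$ and $R_k':=\sum_{i=k}^{\lfloor\frac{n}{2}\rfloor} r_i(T')$. The coordinate formula gives $R_k-R_k'=\sum_{i=k}^{\lfloor\frac{n}{2}\rfloor}\big([i=a]-[i=b]\big)$, which equals $0$ when $k\le a$ or $k>b$, and equals $-1$ when $a<k\le b$. Hence $R_k\le R_k'$ for all $k$, with strict inequality on the nonempty range $a<k\le b$; by the definition of the order this is precisely $T\prec T'$. Part (2) then follows symmetrically by interchanging the roles of $T$ and $T'$ (equivalently, by applying part (1) to the bijection $\varphi^{-1}$, for which the swapped values $a$ and $b$ exchange places).

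I expect no serious obstacle, since the argument is essentially a bookkeeping computation once the coordinate formula is in place. The one point deserving care is confirming that the single ``swapped'' edge contributes to exactly one coordinate of each vector, so that at most two tail sums can differ and they differ by exactly one; this is guaranteed by each edge having a well-defined value $\mu(e)\le\lfloor\frac{n}{2}\rfloor$ together with the bijectivity of $\varphi$, which ensures the edges other than $e_1$ are matched off without affecting any $r_i$.
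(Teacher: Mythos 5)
Your proof is correct, and it is worth noting that the paper itself does not prove this lemma at all: it is imported verbatim from \cite{Song}, so there is no in-paper argument to deviate from. Your tail-sum bookkeeping --- writing $r_i(T)=c_i+[i=a]$, $r_i(T')=c_i+[i=b]$ via the $\mu$-preserving restriction of $\varphi$ to $E(T)\setminus\{e_1\}$, and then observing that $R_k-R_k'$ vanishes for $k\le a$ or $k>b$ and equals $-1$ exactly on the nonempty range $a<k\le b$ --- is precisely the argument the definition of $\preceq$ calls for, and the symmetry reduction of (2) to (1) via $\varphi^{-1}$ is sound.
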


For $T\in \mathcal{T}_n$, let $uv$ and $xu$ be edges of $T$. Denote by $T_x(ux)$  the component in $T-ux$ containing vertex $x$. Note that $T^*=T_u(ux)$. Let $T'=T-ux+xv$ be the tree obtained from $T$ by moving the component $T_x(ux)$ from $u$ to $v$ (see Figure \ref{fig-0}), such tree $T'$ we call a branch-moving of $T_x(ux)$ from $T$.

\begin{figure}[htbp]
  \centering
\unitlength 1mm 
\linethickness{0.4pt}
\ifx\plotpoint\undefined\newsavebox{\plotpoint}\fi 
\begin{picture}(66.242,19.269)(0,0)
\put(12.365,15.404){\circle*{1}}
\put(53.49,15.279){\circle*{1}}
\put(2,9.779){\scriptsize$T^*$}
\put(9,14.904){\scriptsize$u$}
\put(43,10.279){\scriptsize$T^*$}
\put(51,14.904){\scriptsize$u$}
\put(28.365,10.279){\vector(1,0){8.875}}
\put(12.365,15.154){\line(1,0){5.875}}
\put(21.741,13.404){\oval(6.75,9.25)[]}
\put(18.24,15.279){\circle*{1}}
\put(19.241,14.904){\scriptsize$x$}
\put(19.241,11.404){\tiny $T_x(ux)$}
\put(62.867,13.654){\oval(6.75,9.25)[]}
\put(59.367,15.529){\circle*{1}}
\put(60.367,15.154){\scriptsize$x$}
\put(60.367,11.654){\tiny $T_x(ux)$}
\multiput(53.421,15.209)(.9585,0){7}{{\rule{.4pt}{.4pt}}}
\put(12.521,10.576){\circle*{1}}
\put(9,9.899){\scriptsize$v$}
\put(7.955,11.844){\oval(15.91,14.849)[]}
\put(12.198,15.38){\line(0,-1){5.127}}
\put(53.887,10.576){\circle*{1}}
\put(51,9.899){\scriptsize$v$}
\put(49.321,11.844){\oval(15.91,14.849)[]}
\put(53.564,15.379){\line(0,-1){5.127}}
\multiput(53.917,10.253)(.037216146,.033727133){152}{\line(1,0){.037216146}}
\put(12.021,0){\scriptsize$T$}
\put(47,0){\scriptsize$T'=T-ux+vx$}
\end{picture}
  \caption{The branch-moving transformation}\label{fig-0}
\end{figure}
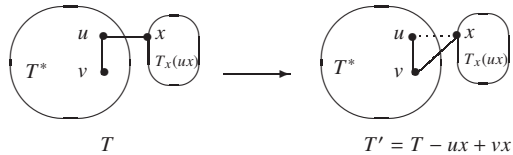

\begin{lemma}\label{lem-2-2}
For  $T\in \mathcal{T}_n$, let $uv$ and $xu$ be edges of $T$. Let $T'=T-ux+xv$. We have\\
(1) If $n_u(uv)\leq n_v(uv)$  then $T\succ T'$;\\
(2) If $n_u(uv)> n_v(uv)$  then
\[\left\{\begin{array}{ll}
T\prec T' & \mbox{ whenever $n_u(uv)-n_v(uv)>|T_x(ux)|$},\\
T\succ T' & \mbox{ whenever $n_u(uv)-n_v(uv)<|T_x(ux)|$},\\
T\approx T' & \mbox{ whenever $n_u(uv)-n_v(uv)=|T_x(ux)|$}.\\
\end{array}\right.
\]
\end{lemma}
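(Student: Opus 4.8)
The plan is to realize $T'$ as a small perturbation of $T$ that alters the $\mu$-value of exactly one edge, and then to invoke Lemma \ref{lem-5}. Concretely, I would define the bijection $\varphi:E(T)\longrightarrow E(T')$ by $\varphi(ux)=xv$ and $\varphi(e)=e$ for every other edge $e$. This is well defined because $T'=T-ux+xv$ differs from $T$ only in that the edge $ux$ is replaced by $xv$, while the remaining $n-2$ edges are common to both trees.

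First I would verify that $T$ and $T'$ are $(\varphi,\mu)$-similar with respect to the edge $uv$, i.e. that $\mu_T(e)=\mu_{T'}(\varphi(e))$ for every $e\neq uv$. Write $a=|T_x(ux)|$ and recall $T^*=T_u(ux)$, so $|T^*|=n-a$ and $v\in V(T^*)$ (since $v$ is joined to $u$ by the edge $uv\neq ux$, so $v$ lies in the component of $u$). For the replaced edge itself, deleting $ux$ from $T$ and deleting $xv$ from $T'$ both split the vertex set into the branch of size $a$ and its complement of size $n-a$, whence $\mu_T(ux)=\min\{a,n-a\}=\mu_{T'}(xv)$. For any edge $e$ lying inside the branch $T_x(ux)$, deleting $e$ cuts off the same sub-branch in both trees, and as the total order is $n$ in each the two part-sizes coincide, so $\mu$ is unchanged. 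The decisive point is the behaviour of the edges of $T^*$, and here I would use that $u$ and $v$ are adjacent: the $u$--$v$ path is the single edge $uv$, so for every $e\in E(T^*)$ with $e\neq uv$ the vertices $u,v$ lie in the same component, say $A$, of $T^*-e$. Since the branch hangs at $u\in A$ in $T$ and at $v\in A$ in $T'$, deleting $e$ yields in both cases the two parts $A\cup T_x(ux)$ and its complement, of identical sizes; hence $\mu_T(e)=\mu_{T'}(e)$. This exhausts all edges other than $uv$, so the similarity holds.

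It then remains to compare $\mu_T(uv)$ with $\mu_{T'}(\varphi(uv))=\mu_{T'}(uv)$ and to apply Lemma \ref{lem-5}. Because the branch of size $a$ sits on the $u$-side of $uv$ in $T$ and on the $v$-side in $T'$, we have $\mu_T(uv)=\min\{n_u(uv),n_v(uv)\}$ and $\mu_{T'}(uv)=\min\{n_u(uv)-a,\,n_v(uv)+a\}$. In part (1), $n_u(uv)\le n_v(uv)$ gives $\mu_T(uv)=n_u(uv)$ while $\mu_{T'}(uv)=n_u(uv)-a<n_u(uv)$ (as $a\ge 1$), so $\mu_T(uv)>\mu_{T'}(uv)$ and Lemma \ref{lem-5}(2) yields $T\succ T'$. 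In part (2) one has $\mu_T(uv)=n_v(uv)$, and a direct inspection of $\min\{n_u(uv)-a,\,n_v(uv)+a\}$ shows it is larger than, smaller than, or equal to $n_v(uv)$ exactly according to whether $n_u(uv)-n_v(uv)$ exceeds, falls below, or equals $a=|T_x(ux)|$; Lemma \ref{lem-5} then delivers $T\prec T'$, $T\succ T'$, and $T\approx T'$ respectively.

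The only genuine obstacle is the similarity verification of the second paragraph, and specifically the claim that no edge of $T^*$ other than $uv$ changes its $\mu$-value; this is precisely where the hypothesis that $ux$ and $uv$ share the vertex $u$ (so that $u$ and $v$ are adjacent) is essential. Everything after the appeal to Lemma \ref{lem-5} is elementary arithmetic with the minimum function.
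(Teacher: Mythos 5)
Your proof is correct and takes essentially the same route as the paper: the identical bijection $\varphi(ux)=xv$, $\varphi(e)=e$ otherwise, the same appeal to Lemma \ref{lem-5} via $(\varphi,\mu)$-similarity with respect to $uv$, and the same comparison of $\mu_T(uv)=\min\{n_u(uv),n_v(uv)\}$ with $\mu_{T'}(uv)=\min\{n_u(uv)-|T_x(ux)|,\,n_v(uv)+|T_x(ux)|\}$. The only differences are cosmetic and in your favor: you spell out the similarity verification that the paper dismisses as ``clear'' (the key observation that no edge of $T^*$ other than $uv$ separates $u$ from $v$), and your direct three-way inspection of the minimum replaces the paper's redundant four-subcase split in part (2).
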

\begin{proof}
First  we define bijection $\varphi: E(T)\longrightarrow E(T')$ such that
\begin{equation}\label{simi-eq-2}
\varphi(e)=\left\{\begin{array}{ll}
e & \mbox{ if $e\neq ux$ is an edge of $T$,}\\ \nonumber
vx & \mbox{ if $e= ux$.  }
\end{array}\right.
\end{equation}
It is clear that $\mu_{T}(e)=\mu_{T'}(\varphi(e))$ if $e\not=uv$. Therefore, $T$ and $T'$ are $(\varphi,\mu)$-similar with respect to $e_1=uv$.

If $n_u(uv)\leq n_v(uv)$, then $\mu_{T}(e_1)=\min\{n_u(uv),n_v(uv)\}=n_u(uv)$. We have
\[\mu_{T}(e_1)=n_u(uv)>n_u(uv)-|T_x(ux)|=n_u'(uv)=\mu_{T'}(uv)=\mu_{T'}(e_1).\]
Therefore, we have $T\succ T'$ by Lemma \ref{lem-5} (2), and (1) holds.

If $n_u(uv)> n_v(uv)$, then $\mu_{T}(e_1)=\min\{n_u(uv),n_v(uv)\}=n_v(uv)$. Notice that $\mu_{T'}(e_1)=\min\{n_u'(uv),n_v'(uv)\}$ and
\begin{equation}\label{nn-eq-1}\left\{\begin{array}{ll}
n_u'(uv)=n_u(uv)-|T_x(ux)|,\\ \nonumber
n_v'(uv)=n_v(uv)+|T_x(ux)|.
\end{array}\right.
\end{equation} It is easy to verify that
$$\left\{\begin{array}{ll}
\mu_{T}(e_1)=n_v(uv)<n_v(uv)+|T_x(ux)|=\mu_{T'}(e_1) & \mbox{ if $n_u(uv)-n_v(uv)\ge 2|T_x(ux)|$,}\\
\mu_{T}(e_1)=n_v(uv)> n_u(uv)-|T_x(ux)|=\mu_{T'}(e_1) & \mbox{ if $n_u(uv)-n_v(uv)<|T_x(ux)|$,}\\
\mu_{T}(e_1)=n_v(uv)= n_u(uv)-|T_x(ux)|=\mu_{T'}(e_1) & \mbox{ if $n_u(uv)-n_v(uv)=|T_x(ux)|$,}\\
\mu_{T}(e_1)=n_v(uv)<  n_u(uv)-|T_x(ux)|=\mu_{T'}(e_1) & \mbox{ if  $|T_x(ux)|<n_u(uv)-n_v(uv)< 2|T_x(ux)|$,}\\
\end{array}\right.
$$
\normalsize that is,
\[\left\{\begin{array}{ll}
\mu_{T}(e_1)<\mu_{T'}(e_1) & \mbox{ if $n_u(uv)-n_v(uv)>|T_x(ux)|$,}\\
\mu_{T}(e_1)> \mu_{T'}(e_1) & \mbox{ if $n_u(uv)-n_v(uv)<|T_x(ux)|$,}\\
\mu_{T}(e_1)= \mu_{T'}(e_1) & \mbox{ if $n_u(uv)-n_v(uv)=|T_x(ux)|$.}\\
\end{array}\right.\]
It follows (2) by Lemma \ref{lem-5}.

We complete this proof.
\end{proof}
Lemma \ref{lem-2-2} indicates the changes for the order $\preceq$ in $\langle\mathcal{T}_n, \preceq \rangle$ when we apply branch-moving along an edge, which can be used to find the maximum or minimum tree under the meaning of order $\preceq$.

\section{Branch-exchange for trees}

In this section we will introduce a graph transformation, called the branch-exchange, which will be used to construct the pairs of EDV-equivalent trees or DEDV-trees.

For a tree  $T\in \mathcal{T}_n$, let $u$ and $v$ be two  vertices of $T$ and $P_{uv}=uu_2\cdots u_{k-1}v$ be the path connecting $u$ and $v$.  Now let
$$T_{u}(P_{uv})=\{T_x(ux)\mid x\in N_T(u)\setminus u_2\},$$
which is called the $u$-branch of $T$ (with respect to $P_{uv}$). Each $T_x(ux)\in T_{u}(P_{uv})$ is a subtree  in $T-u$ that has $x\in N_T(u)\setminus u_2$ as its root vertex, and $v$-branch  $T_{v}(P_{uv})=\{T_y(vy)\mid y\in N_T(v)\setminus u_{k-1}\}$ is similarly defined.
We say that two subsets $S(u)=\{x_1,\ldots,x_s\}\subseteq N_T(u)\setminus u_2$ and  $S(v)=\{y_1,\ldots,y_t\}\subseteq N_T(v)\setminus u_{k-1}$ are \emph{balanced} if $$\sum_{x_i\in S(u)}|T_{x_i}(ux_i)|=\sum_{y_j\in S(v)}|T_{y_j}(vy_j)|.$$
Further, we call $T_{S(u)}=\{T_{x_i}(ux_i)\mid x_i\in S(u)\}$ and $T_{S(v)}=\{T_{y_j}(vy_j)\mid y_j\in S(v)\}$ the \emph{balanced components} with respect to $u$ and $v$. By deleting the balanced components from $T$, we get
$$T^*=T-(\sum_{x_i\in S(u)}ux_i+T_{S(u)})-(\sum_{y_j\in S(v)}vy_j+T_{S(v)}).$$
Hence, we have  $T^*_{u}(P_{uv})=T_{u}(P_{uv})-T_{S(u)}$ and $T^*_{v}(P_{uv})=T_{v}(P_{uv})-T_{S(v)}$ (see Figure \ref{fig-2}).

\begin{definition}\label{def-3-2}
Let $T'$  be the tree obtained from $T$ by exchanging the  balanced components $T_{S(u)}$ and $T_{S(v)}$ (see Figure \ref{fig-2}), that is
\begin{equation}\label{T1-T2-eq-1} \left\{\begin{array}{ll}
T=T^*+(\sum_{x_i\in S(u)}ux_i+T_{S(u)})+(\sum_{y_j\in S(v)}vy_j+T_{S(v)})\\
T'=T^*+(\sum_{y_j\in S(v)}uy_j+T_{S(v)})+(\sum_{x_i\in S(v)}vx_i+T_{S(u)}).
\end{array}\right.
\end{equation}
We call $T'$ the \emph{branch-exchange} of $T$  with $T_{S(u)}$ and $T_{S(v)}$.
\end{definition}

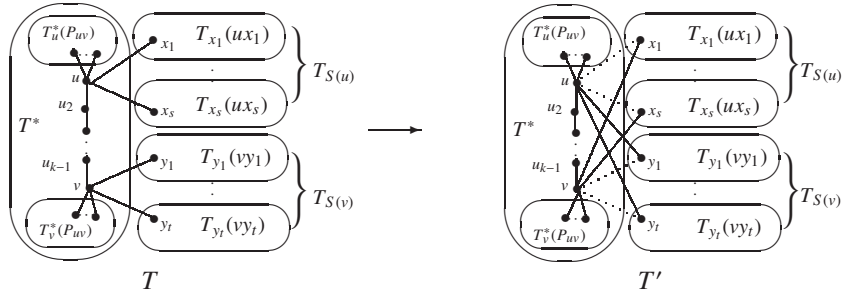
\begin{figure}[htb]
\centering
\unitlength 1.2mm 
\linethickness{0.4pt}
\ifx\plotpoint\undefined\newsavebox{\plotpoint}\fi 
\begin{picture}(87.5,31.642)(0,0)
\multiput(8.809,22.75)(.049144828,.03362069){145}{\line(1,0){.049144828}}
\multiput(8.809,22.875)(.074742268,-.033505155){97}{\line(1,0){.074742268}}
\multiput(8.809,11.375)(.076623656,.033602151){93}{\line(1,0){.076623656}}
\multiput(8.809,11)(.076623656,-.033602151){93}{\line(1,0){.076623656}}
\put(15.809,27.625){\circle*{.8}}
\put(15.935,19.625){\circle*{.8}}
\put(8.809,11.25){\circle*{.8}}
\put(15.935,14.5){\circle*{.8}}
\put(15.935,7.75){\circle*{.8}}
\put(.849,17.375){\scriptsize$T^*$}
\put(6.8,23){\tiny$u$}
\put(7,11){\tiny$v$}
\put(16.649,26.75){\tiny$x_1$}
\put(22.149,23){\tiny$\vdots$}
\put(16.649,19.325){\tiny$x_s$}
\put(16.649,13.625){\tiny$y_1$}
\put(22.149,9.3){\tiny$\vdots$}
\put(16.649,6.75){\tiny$y_t$}
\put(14.5,0){\footnotesize$T$}
\put(20.5,27.572){\scriptsize$T_{x_1}(ux_1)$}
\put(20.5,19.8){\scriptsize$T_{x_s}(ux_s)$}
\put(31,24){\Bigg\}}
\put(33.5,23.5){\scriptsize$T_{S(u)}$}
\put(21,13.96){\scriptsize$T_{y_1}(vy_1)$}
\put(21,6.4){\scriptsize$T_{y_t}(vy_t)$}
\put(31,10){\Bigg\}}
\put(33.5,9.5){\scriptsize$T_{S(v)}$}
\put(22.188,20.589){\oval(16.794,5.127)[]}
\put(22.011,28.015){\oval(16.794,5.127)[]}
\put(22.365,14.58){\oval(16.794,5.127)[]}
\put(22.542,6.801){\oval(16.794,5.127)[]}
\put(62.782,22.875){\circle*{.8}}
\put(62.782,11.125){\circle*{.8}}
\put(55.649,17){\scriptsize$T^*$}
\put(61,22.625){\tiny$u$}
\put(61,11){\tiny$v$}
\put(69.5,0){\footnotesize$T'$}
\put(69.783,27.625){\circle*{.8}}
\put(69.908,19.625){\circle*{.8}}
\put(69.908,14.5){\circle*{.8}}
\put(69.908,7.75){\circle*{.8}}
\put(70.649,26.75){\tiny$x_1$}
\put(75.649,23){\tiny$\vdots$}
\put(70.649,19.325){\tiny$x_s$}
\put(70.649,13.625){\tiny$y_1$}
\put(75.649,9.3){\tiny$\vdots$}
\put(70.649,6.75){\tiny$y_t$}
\multiput(69.783,27.625)(-.03372093,-.077906977){215}{\line(0,-1){.077906977}}
\multiput(69.908,19.5)(-.03372093,-.040697674){215}{\line(0,-1){.040697674}}
\multiput(69.783,14.625)(-.03372093,.038372093){215}{\line(0,1){.038372093}}
\multiput(62.533,22.875)(.033675799,-.069634703){219}{\line(0,-1){.069634703}}
\multiput(69.588,27.805)(-.7125,-.5){11}{{\rule{.4pt}{.4pt}}}
\multiput(62.463,22.805)(.80556,-.375){10}{{\rule{.4pt}{.4pt}}}
\multiput(69.713,14.43)(-.77778,-.38889){10}{{\rule{.4pt}{.4pt}}}
\multiput(62.713,10.93)(.79167,-.36111){10}{{\rule{.4pt}{.4pt}}}
\put(75,27.572){\scriptsize$T_{x_1}(ux_1)$}
\put(75,19.63){\scriptsize$T_{x_s}(ux_s)$}
\put(85.5,24){\Bigg\}}
\put(87.5,23.5){\scriptsize$T_{S(u)}$}
\put(76,14){\scriptsize$T_{y_1}(vy_1)$}
\put(76,6.2){\scriptsize$T_{y_t}(vy_t)$}
\put(85.5,10){\Bigg\}}
\put(87.5,9.7){\scriptsize$T_{S(v)}$}
\put(76.636,20.587){\oval(16.794,5.127)[]}
\put(76.459,28.013){\oval(16.794,5.127)[]}
\put(76.813,14.578){\oval(16.794,5.127)[]}
\put(76.989,6.801){\oval(16.794,5.127)[]}
\put(8.419,22.919){\line(0,-1){3.122}}
\put(8.419,19.798){\line(0,-1){1.932}}
\put(8.419,14.743){\line(0,-1){3.568}}
\multiput(8.568,22.919)(-.03336735,.07583673){49}{\line(0,1){.07583673}}
\multiput(8.419,23.068)(.03302778,.09083333){36}{\line(0,1){.09083333}}
\multiput(8.716,11.176)(-.03302222,-.07268889){45}{\line(0,-1){.07268889}}
\multiput(8.716,11.324)(.033037,-.1266296){27}{\line(0,-1){.1266296}}
\put(7.081,26.19){\circle*{.8}}
\put(9.608,26.041){\circle*{.8}}
\put(7.23,8.203){\circle*{.8}}
\put(9.46,8.054){\circle*{.8}}
\put(7.5,25.5){\tiny$\cdots$}
\put(7.5,7.8){\tiny$\cdots$}
\put(3.5,28){\tiny$T^*_u(P_{uv})$}
\put(3.2,6){\tiny$T^*_v(P_{uv})$}
\put(8.27,19.946){\circle*{.8}}
\put(8.419,17.568){\circle*{.8}}
\put(8.419,14.297){\circle*{.8}}
\put(5.3,20.095){\tiny$u_2$}
\put(8.2,15){\tiny$\vdots$}
\put(3.5,13.5){\tiny$u_{k-1}$}
\put(62.677,22.622){\line(0,-1){3.122}}
\put(62.677,14.446){\line(0,-1){3.568}}
\multiput(62.826,22.622)(-.03336735,.07583673){49}{\line(0,1){.07583673}}
\multiput(62.677,22.771)(.03302778,.09083333){36}{\line(0,1){.09083333}}
\multiput(62.974,10.878)(-.03302222,-.07266667){45}{\line(0,-1){.07266667}}
\multiput(62.974,11.027)(.033037,-.1266296){27}{\line(0,-1){.1266296}}
\put(61.339,25.892){\circle*{.8}}
\put(63.866,25.744){\circle*{.8}}
\put(61.488,7.905){\circle*{.8}}
\put(63.718,7.757){\circle*{.8}}
\put(61.7,25.5){\tiny$\cdots$}
\put(61.7,7.3){\tiny$\cdots$}
\put(57.9,28){\tiny$T^*_u(P_{uv})$}
\put(57.9,5.5){\tiny$T^*_v(P_{uv})$}
\put(62.528,19.649){\circle*{.8}}
\put(62.677,17.27){\circle*{.8}}
\put(62.677,14){\circle*{.8}}
\put(59.5,19.798){\tiny$u_2$}
\put(62.4,14.8){\tiny$\vdots$}
\put(57.8,13.3){\tiny$u_{k-1}$}
\put(62.528,19.5){\line(0,-1){1.932}}
\put(6.875,27.563){\oval(9.75,5.375)[]}
\put(6.615,17.446){\oval(13.23,28.392)[]}
\put(6.625,7.313){\oval(9.75,5.375)[]}
\put(8.4,23.15){\circle*{.8}}
\put(61.625,27.438){\oval(9.75,5.375)[]}
\put(61.365,17.321){\oval(13.23,28.392)[]}
\put(61.375,7.188){\oval(9.75,5.375)[]}
\put(39.625,17.625){\vector(1,0){5.875}}
\end{picture}
  \caption{The branch-exchange transformation}\label{fig-2}
\end{figure}

It is worth mentioning that $u$ and $v$ do not need to be adjacent vertices in $T$ for the branch-exchange. Additionally, each component $T_{x_i}(ux_i)\in T_{S(u)}$  is a subtree of $T$ with root vertex $x_i\in S(u)\subset N(u)$ (see Figure \ref{fig-2}).

\begin{lemma}\label{lem-2-3}
Suppose that $T\in \mathcal{T}_n$ has two balanced  components $T_{S(u)}$ and $T_{S(v)}$  and let $T'$ be the branch-exchange of $T$ with $T_{S(u)}$ and $T_{S(v)}$  shown in  Figure \ref{fig-2}. We have $T\approx T'$.
\end{lemma}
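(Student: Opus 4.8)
The plan is to show $T \approx T'$ by establishing that $T$ and $T'$ are $(\varphi,\mu)$-similar with respect to \emph{every} edge simultaneously, so that in fact $\mu_T(e) = \mu_{T'}(\varphi(e))$ for all $e$, and then invoke part (3) of Lemma \ref{lem-5}. The key structural observation is that the branch-exchange transformation in Definition \ref{def-3-2} does not alter the unexchanged part $T^*$ at all, nor does it alter the internal structure of the exchanged balanced components $T_{S(u)}$ and $T_{S(v)}$; it only reattaches the roots $x_i$ to $v$ instead of $u$ (and symmetrically $y_j$ to $u$ instead of $v$). Therefore the natural candidate bijection $\varphi: E(T) \to E(T')$ is the identity on every edge of $T^*$ and on every edge internal to the components, and it maps each attaching edge $ux_i$ to the new attaching edge $vx_i$ and each $vy_j$ to $uy_j$.

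First I would partition $E(T)$ into four classes and verify $\mu$-invariance class by class. For an edge $e$ lying strictly inside a component $T_{x_i}(ux_i)$ of $T_{S(u)}$ (or inside any component of $T^*$), deleting $e$ splits off the same subtree on both sides regardless of whether the component hangs from $u$ or $v$, because the component is transplanted rigidly; hence $n_u(e)$ and $n_v(e)$ are unchanged and $\mu_T(e)=\mu_{T'}(\varphi(e))$. The same holds for edges internal to the path $P_{uv}$ and to the components in $T^*_u(P_{uv})$ and $T^*_v(P_{uv})$, since none of the vertex counts on either side of such an edge is affected by swapping the two balanced blocks — here the \emph{balance} condition $\sum_{x_i\in S(u)}|T_{x_i}(ux_i)| = \sum_{y_j\in S(v)}|T_{y_j}(vy_j)|$ is exactly what keeps the counts on either side of a path edge constant.

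The crux is the attaching edges themselves. For the edge $ux_i$ in $T$, one side is the component $T_{x_i}(ux_i)$ with $|T_{x_i}(ux_i)|$ vertices and the other side has $n - |T_{x_i}(ux_i)|$; for its image $vx_i$ in $T'$, the component side is identical and the complementary side again has $n - |T_{x_i}(ux_i)|$ vertices, so the two multisets $\{|T_{x_i}(ux_i)|, n-|T_{x_i}(ux_i)|\}$ coincide and the minima agree. The analogous computation handles each $vy_j \mapsto uy_j$. I expect this attaching-edge case to be the only place where the balance hypothesis is genuinely needed for the path edges, and the main obstacle is bookkeeping: making the partition of $E(T)$ exhaustive and checking that $\varphi$ is a well-defined bijection onto $E(T')$ (both graphs have $n-1$ edges, and $\varphi$ is injective with the stated image, so surjectivity is automatic). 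Once every edge satisfies $\mu_T(e)=\mu_{T'}(\varphi(e))$, the trees are $(\varphi,\mu)$-similar with respect to any chosen $e_1$ and that edge also has equal $\mu$-values, so Lemma \ref{lem-5}(3) gives $T\approx T'$, completing the proof.
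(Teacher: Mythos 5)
Your proposal is correct and follows essentially the same route as the paper: you define the identical bijection $\varphi$ (identity off the attaching edges, $ux_i\mapsto vx_i$, $vy_j\mapsto uy_j$), check that $\mu_T(e)=\mu_{T'}(\varphi(e))$ for every edge, and conclude via Lemma \ref{lem-5}(3); your case analysis simply fills in what the paper dismisses as ``routine to verify.'' One small slip in your wording: the balance hypothesis is needed precisely for the edges of $P_{uv}$ (as your Case~2 correctly argues), not for the attaching edges $ux_i$, $vy_j$, whose $\mu$-values are preserved unconditionally.
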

\begin{proof}
Let $S(u)=\{x_1,\ldots,x_s\}\subseteq N(u)\setminus \{v\}$ and  $S(v)=\{y_1,\ldots,y_t\}\subseteq N(v)\setminus \{u\}$ be two balanced  subsets of $V(T)$. First  we define bijection $\varphi: E(T)\longrightarrow E(T')$ such that
\begin{equation}\label{simi-eq-2}\varphi(e)=\left\{\begin{array}{ll}
e & \mbox{ if $e\neq ux_i, vy_j$ for $i=1,\ldots,s$, $j=1,\ldots,t$,}\\ \nonumber
vx_i & \mbox{ if $e= ux_i$ for $i=1,\ldots,s$,}\\ \nonumber
uy_j & \mbox{ if $e= vy_j$ for $j=1,\ldots,t$.}
\end{array}\right.
\end{equation}
It is routine to verify   that $\mu_{T}(e)=\mu_{T'}(\varphi(e))$ for any $e\in E(T)$ since $S(u)$ and $S(v)$ are balanced.
It follows $T\approx T'$ by Lemma \ref{lem-5}(3).
\end{proof}

For a tree $F$ with vertex $x$, let $F_x$ denote the  tree with specified root vertex $x$. For two trees $F$ and $H$,  $F_x$ and $H_y$ are said to be \emph{strongly isomorphic}, denoted by $F_x\simeq H_y$,  if there exists an isomorphism $\varphi$ from $F$ to $H$ such that $\varphi(x)=y$. It is clear that $F_x\simeq H_y$ implies $F_x\cong H_y$ but not vice versa. Let $F^f=\cup_{i=1}^sF_{x_i}$ and $H^f=\cup_{i=1}^tH_{y_i}$, where $F_{x_1},\ldots,F_{x_s}$ (resp., $H_{y_1},\ldots,H_{y_t}$) are vertex disjoint trees. We say that $F^f\simeq H^f$ if $s=t$ and
there exists a permutation $\phi$ on $\{1,2,\ldots,s\}$ such that $F_{x_i}\simeq H_{y_{\phi(i)}}$ for $i=1,\ldots,s$.

Let $\alpha$ be an automorphism of $T$, i.e., $\alpha\in Aut(T)$.
The image of $P_{uv}$ under $\alpha$ is also a path, say $\alpha(P_{uv})=u'u_2'\cdots u_{k-1}'v'=P_{u'v'}$ where $\alpha(u_i)=u_i'$ for $i=2,\ldots,k-1$ and $\alpha(u)=u'$, $\alpha(v)=v'$. By $T_{u}(P_{uv})\simeq T_{\alpha(u)}(\alpha(P_{uv}))= T_{u'}(P_{u'v'})$ we mean that, for each  $T_x(xu)\in T_{u}(P_{uv})$, there is some $T_z(zu')\in T_{u'}(P_{u'v'})$ such that $T_x(xu)\simeq T_z(zu')$ (i.e., $T_x(xu)$ and $ T_z(zu')$ are strongly isomorphic). We call $u$ and $v$  \emph{similar} if there exists  $\alpha\in Aut(T)$ such that $\alpha$ contains the transposition $(u\ v)$ (i.e., $\alpha(u)=v$ and $\alpha(v)=u$). Obviously, if  $u$ and $v$  are similar   then $T_{u}(P_{uv})\simeq T_{v}(P_{uv})$.

By using the above  notions and symbols we can prove the following result.

\begin{lemma}\label{thm-3-2-0}
Let $T$ and $T'$ be the trees described in (\ref{T1-T2-eq-1}) and shown in Figure \ref{fig-2}, where $T_{S(u)}$ and $T_{S(v)}$ are  balanced components. Then $T\approx T'$. Moreover, if $T\cong T'$, then either $T_{S(u)}\simeq T_{S(v)}$ or $u$ and $v$ are similar in $T^*$.
\end{lemma}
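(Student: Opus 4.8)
The first assertion $T\approx T'$ is immediate from Lemma \ref{lem-2-3}, since $T'$ is by construction the branch-exchange of $T$ with the balanced components $T_{S(u)}$ and $T_{S(v)}$. So the entire content lies in the second assertion, and the plan is to argue contrapositively from a fixed isomorphism. Assume $T\cong T'$ and let $\psi\colon T\to T'$ be an isomorphism. The key observation is that the two trees share the common skeleton $T^*$ together with the path $P_{uv}$, and differ only in how the balanced families $T_{S(u)}, T_{S(v)}$ are attached: in $T$ the family $T_{S(u)}$ hangs at $u$ and $T_{S(v)}$ hangs at $v$, whereas in $T'$ they are swapped. I would first record the structural invariants that $\psi$ must respect. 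Since $\mu_T(e)=\mu_{T'}(\psi$-image of $e)$ is forced on the shared edges, and since $u,v$ are distinguished by their position on $P_{uv}$ relative to $T^*$ (the branch vertices where the exchanged families attach), $\psi$ must map the unordered pair $\{u,v\}$ to itself. This gives the central case split: either $\psi(u)=u$ and $\psi(v)=v$, or $\psi(u)=v$ and $\psi(v)=u$.

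In the first case, $\psi$ fixes $u$ and $v$, hence fixes the path $P_{uv}$ pointwise (more precisely setwise with endpoints fixed, and since internal vertices are ordered by distance to the endpoints, pointwise), and therefore must map the collection of branches attached at $u$ in $T$ to the collection attached at $u$ in $T'$. But in $T$ the branches at $u$ off the path are exactly $T^*_u(P_{uv})\cup T_{S(u)}$, while in $T'$ they are $T^*_u(P_{uv})\cup T_{S(v)}$. The shared part $T^*_u(P_{uv})$ is common, so comparing multisets of rooted branches (via strong isomorphism $\simeq$) forces the exchanged families to match: $T_{S(u)}\simeq T_{S(v)}$, which is the first alternative of the conclusion. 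In the second case, $\psi$ swaps $u$ and $v$; I would restrict attention to the subtree $T^*$ (equivalently, to $T$ with the exchanged families deleted), and argue that $\psi$ induces an automorphism of $T^*$ carrying $u$ to $v$ and $v$ to $u$—that is, the transposition $(u\ v)$ extends to an automorphism of $T^*$, so $u$ and $v$ are similar in $T^*$, the second alternative.

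The main obstacle, and the step I would spend the most care on, is the second case: extracting a genuine automorphism of $T^*$ realizing the transposition $(u\ v)$ from the global isomorphism $\psi$. One must check that $\psi$ restricted to the vertices of $T^*$ actually lands in $T^*$ (i.e.\ that $\psi$ does not mix skeleton vertices with the exchanged-branch vertices), which follows from the fact that $\psi$ preserves the $\mu$-values, hence the sizes of the two sides of each edge, hence the attachment structure along $P_{uv}$; and one must verify that the induced permutation is genuinely a graph automorphism of $T^*$ and not merely a bijection. A subsidiary subtlety is that these two alternatives are not mutually exclusive and the argument need only produce \emph{at least one} of them; in particular, when $\psi(u)=v,\psi(v)=u$ one should be careful that the exchanged families, being swapped from their original positions, are still correctly accounted for when passing to $T^*$, so that deleting them before restricting $\psi$ is legitimate. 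Once the restriction is shown to be well defined and bijective on $T^*$, verifying it preserves adjacency is routine, and the conclusion follows.
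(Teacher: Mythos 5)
Your proof of the first assertion ($T\approx T'$ via Lemma \ref{lem-2-3}) agrees with the paper, but your proof of the second assertion has a genuine gap: the claim that any isomorphism $\psi\colon T\to T'$ ``must map the unordered pair $\{u,v\}$ to itself'' is false, and your justification (that $u,v$ are ``distinguished by their position on $P_{uv}$ relative to $T^*$'') does not hold, because $u$ and $v$ are not singled out by any isomorphism invariant: $\psi$ may carry the whole configuration at $\{u,v\}$ onto a strongly isomorphic configuration sitting elsewhere in the tree. Concretely, let $T^*$ be the star with center $w$ and leaves $u,v,u',v'$, where in addition $u'$ carries two pendant leaves $x_1',x_2'$ and $v'$ carries a pendant path $v'y_1'y_2'$; take $T_{S(u)}=\{x_1,x_2\}$ (two single vertices attached at $u$) and $T_{S(v)}=\{y_1y_2\}$ (one $P_2$ attached at $v$), which are balanced but not strongly isomorphic. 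Then the map fixing $w,v',y_1,y_2,y_1',y_2'$ and sending $u\mapsto u'$, $x_i\mapsto x_i'$, $v\mapsto u$, $u'\mapsto v$, $x_i'\mapsto x_i$ is an isomorphism from $T$ to $T'$ with $\psi(\{u,v\})=\{u,u'\}\neq\{u,v\}$. (The lemma's conclusion does hold here, since $u$ and $v$ are similar in $T^*$; but your argument, run on this $\psi$, falls into neither of your two cases and produces nothing.) So your case split is not exhaustive, and the omitted case is exactly the hard one.

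This is precisely the difficulty the paper's proof is built to handle, and it does so by a different mechanism: instead of fixing an isomorphism and tracking $\{u,v\}$, it counts strongly isomorphic copies of the $u$-branch and $v$-branch. It forms the orbits $B_u,B_v$ of these branches under $Aut(T)$, notes that all copies other than those rooted at $u$ and $v$ lie inside $T^*$, and uses $T\cong T'$ to force the two branches of $T'$ not contained in $T^*$, namely $T_u^*(P_{uv})\cup T_{S(v)}$ and $T_v^*(P_{uv})\cup T_{S(u)}$, to be (up to strong isomorphism) one $u$-branch and one $v$-branch; the dichotomy in the conclusion then comes from whether $B_u$ and $B_v$ are distinct orbits or the same orbit, not from where an isomorphism sends $u$ and $v$. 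Your Case 1 cancellation of multisets of rooted branches is sound, and your worry in Case 2 is legitimate (when branches of $T_{S(u)}$ are strongly isomorphic to branches of $T_u^*(P_{uv})$ or $T_v^*(P_{uv})$, $\psi$ need not restrict to $T^*$, so one must cancel multisets and reassemble an automorphism rather than restrict $\psi$; preservation of $\mu$-values does not fix this). But neither repair touches the missing third case, and establishing even the weaker statement that \emph{some} isomorphism preserves $\{u,v\}$ is essentially equivalent to the lemma itself, i.e.\ it would require a counting argument of the paper's kind anyway.
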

\begin{proof}
Since $T'$  is a branch-exchange of $T$ with  balanced components $T_{S(u)}$ and $T_{S(v)}$, we have $T'\approx T$ by  Lemma \ref{lem-2-3}.

Let  $B_u=\{T_{\alpha(u)}(\alpha(P_{uv}))\mid \alpha\in Aut (T)\}$ be the set of strongly isomorphic copies of $u$-branch that consists of an orbit of $Aut(T)$, and similarly we define $B_v=\{T_{\alpha(v)}(\alpha(P_{uv}))\mid \alpha\in Aut (T)\}$. Note that the $u$-branch $T_{u}(P_{uv})\in B_u$, we have $b_u=|B_u|\ge 1$, and all the copies of the $u$-branch in $B_u$ are vertex-disjoint due to $T$ is a tree, similarly we have $b_v=|B_v|\ge 1$.

Since  $ T\cong T'$,  $T'$ also contains $b_u$ copies of  $u$-branch and $b_v$ copies of $v$-branch.   From Figure \ref{fig-2} and  the representation of $T$ in (\ref{T1-T2-eq-1}), we see that besides of $T_{u}(P_{uv})$ the other  $b_u-1$ number  of $u$-branches are included in $T^*$, similarly besides of $T_{v}(P_{uv})$ the other  $b_v-1$ number  of $v$-branches are also included in $T^*$. Therefore, $T^*$ contains exactly $(b_u-1)$'s $u$-branches in $B_u$ and  $(b_v-1)$'s $v$-branches in $B_v$. On the other hand, from Figure \ref{fig-2} we see that $T_{u}'(P_{uv})=T_u^*(P_{uv})\cup T_{S(v)}$ and  $T_{v}'(P_{uv})=T_v^*(P_{uv})\cup T_{S(u)}$ are only two branches of $T'$ not included in $T^*$, it implies that they must be one $u$-branch and one $v$-branch since  otherwise $T'$ will contain at most $b_u-1$ numbers of $u$-branches or $b_v-1$ numbers of $v$-branches. Hence $\{T_{u}(P_{uv}),T_{v}(P_{uv})\}=\{T_{u}'(P_{uv}),T_{v}'(P_{uv})\}$. If $B_u$ and $B_v$ are distinct orbits then $T_u^*(P_{uv})\cup T_{S(u)}=T_{u}(P_{uv})\simeq T_{u}'(P_{uv})=T_u^*(P_{uv})\cup T_{S(v)}$, and so $T_{S(u)}\simeq  T_{S(v)}$. If $B_u$ and $B_v$ are identified then $T_u^*(P_{uv})\cup T_{S(u)}=T_{u}(P_{uv})\simeq T_{v}'(P_{uv})=T_v^*(P_{uv})\cup T_{S(u)}$, and so  $T_u^*(P_{uv})\simeq T_v^*(P_{uv})$. Thus  there exists $\alpha\in Aut(T)$ such that $\alpha(u)=v$.  It implies that $T^*$ has  an automorphism $\alpha^*$ containing transposition $(u\ v)$, i.e., $u$ and $v$ are similar in $T^*$.

We complete the proof.
\end{proof}

 It immediately follows the following three results from Lemma \ref{thm-3-2-0}.

\begin{theorem}\label{thm-3-2-1}
Under the assumption of Lemma \ref{thm-3-2-0}, if   $T_{S(u)}\not\simeq T_{S(v)}$ and $u,v$ are not similar in $T^*$, then $T\approx T'$ but  $T\not\cong T'$.
\end{theorem}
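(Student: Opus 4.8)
The plan is to obtain this statement directly as the contrapositive of the second assertion in Lemma \ref{thm-3-2-0}, combined with its first assertion. Since $T'$ is by construction the branch-exchange of $T$ with the balanced components $T_{S(u)}$ and $T_{S(v)}$, the relation $T\approx T'$ is already guaranteed unconditionally by the first part of Lemma \ref{thm-3-2-0} (equivalently, by Lemma \ref{lem-2-3}, since the bijection $\varphi$ constructed there preserves the edge function $\mu$ on every edge). Thus the only thing left to establish is the non-isomorphism $T\not\cong T'$.

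For this I would argue by contradiction. Suppose $T\cong T'$. Then the second assertion of Lemma \ref{thm-3-2-0} applies and yields a dichotomy: either $T_{S(u)}\simeq T_{S(v)}$, or $u$ and $v$ are similar in $T^*$. Both alternatives are explicitly excluded by the hypotheses of the present theorem, namely $T_{S(u)}\not\simeq T_{S(v)}$ and that $u,v$ are not similar in $T^*$. This contradiction forces $T\not\cong T'$, which together with $T\approx T'$ completes the argument.

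Since the entire structural content has already been carried out inside the proof of Lemma \ref{thm-3-2-0} --- tracking the orbits $B_u$ and $B_v$ of the $u$- and $v$-branches under $Aut(T)$, and deducing that an isomorphism $T\cong T'$ must either match $T_{S(u)}$ with $T_{S(v)}$ or induce the transposition $(u\ v)$ on $T^*$ --- there is no genuine obstacle here; the theorem is a clean corollary. The only point deserving care is the logical form: Lemma \ref{thm-3-2-0} supplies a \emph{necessary} condition for $T\cong T'$, so I must invoke it in contrapositive form, negating both disjuncts simultaneously, rather than attempting to construct a distinguishing invariant from scratch, which would merely reprove the lemma.
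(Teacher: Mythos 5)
Your proposal is correct and matches the paper exactly: the paper offers no separate proof of this theorem, stating only that it ``immediately follows'' from Lemma \ref{thm-3-2-0}, which is precisely your contrapositive reading (the $\approx$-relation from the lemma's first part, and non-isomorphism because an isomorphism would force one of the two excluded alternatives). Nothing further is needed.
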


If $T_u^*(P_{uv})\not\simeq T_v^*(P_{uv})$ then $u$ and $v$ are not similar in $T^*$. From Theorem \ref{thm-3-2-1} we have

\begin{corollary}\label{cor-3-2-0}
Under the assumption of Lemma \ref{thm-3-2-0}. If $T_{S(u)}\not\simeq T_{S(v)}$ and $T_u^*(P_{uv})\not\simeq T_v^*(P_{uv})$, then $T\approx T'$ but  $T\not\cong T'$.
\end{corollary}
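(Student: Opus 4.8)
The plan is to deduce this corollary directly from Theorem \ref{thm-3-2-1}, whose conclusion is exactly $T\approx T'$ and $T\not\cong T'$. That theorem needs two hypotheses: $T_{S(u)}\not\simeq T_{S(v)}$, which is already granted here, and that $u,v$ are \emph{not} similar in $T^*$. So the entire task reduces to checking that the condition $T_u^*(P_{uv})\not\simeq T_v^*(P_{uv})$ forces non-similarity of $u$ and $v$ in $T^*$; once that is established, the conclusion is immediate. I would prove this implication by contraposition, i.e.\ I would show that if $u$ and $v$ are similar in $T^*$, then $T_u^*(P_{uv})\simeq T_v^*(P_{uv})$.

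To set this up cleanly I would first observe that the balanced components $T_{S(u)}$ and $T_{S(v)}$ are attached to $u$ and $v$ through the edges $ux_i$ and $vy_j$, and that by the definitions $x_i\in N_T(u)\setminus u_2$ and $y_j\in N_T(v)\setminus u_{k-1}$, so none of these edges lies on the path $P_{uv}=uu_2\cdots u_{k-1}v$. Consequently, deleting the balanced components to form $T^*$ leaves $P_{uv}$ intact, so $P_{uv}$ is exactly the path joining $u$ and $v$ in $T^*$, and $T_u^*(P_{uv})$, $T_v^*(P_{uv})$ are genuinely the $u$-branch and $v$-branch of $T^*$ with respect to this path. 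Now suppose $u$ and $v$ are similar in $T^*$; by definition there is $\alpha^*\in Aut(T^*)$ containing the transposition $(u\ v)$. Since $\alpha^*(u)=v$ and $\alpha^*(v)=u$, the image $\alpha^*(P_{uv})$ is again the unique path of $T^*$ between $v$ and $u$, namely $P_{uv}$ reversed, and $\alpha^*$ sends the neighbours of $u$ lying off $P_{uv}$ to the neighbours of $v$ lying off $P_{uv}$. Hence $\alpha^*$ carries each branch in $T_u^*(P_{uv})$ strongly isomorphically onto a branch in $T_v^*(P_{uv})$, giving $T_u^*(P_{uv})\simeq T_v^*(P_{uv})$; this is precisely the remark recorded for similar vertices, applied inside $T^*$. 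Taking the contrapositive yields the required implication, and Theorem \ref{thm-3-2-1} then delivers $T\approx T'$ and $T\not\cong T'$.

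I do not anticipate any genuine obstacle, since this is a corollary whose substance is carried by Theorem \ref{thm-3-2-1} and by Lemma \ref{thm-3-2-0}. The only point demanding care is the bookkeeping in the middle paragraph: one must confirm that $P_{uv}$ survives the passage from $T$ to $T^*$, so that ``similarity in $T^*$'' is measured with respect to the same path, and that the branches $T_u^*(P_{uv})$ and $T_v^*(P_{uv})$ coincide with the $u$- and $v$-branches of $T^*$ — exactly the data Theorem \ref{thm-3-2-1} consumes.
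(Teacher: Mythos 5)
Your proposal is correct and follows essentially the same route as the paper: the paper also deduces the corollary from Theorem \ref{thm-3-2-1} via the one-line observation that $T_u^*(P_{uv})\not\simeq T_v^*(P_{uv})$ forces $u$ and $v$ to be non-similar in $T^*$. You merely spell out (correctly) the contrapositive argument and the bookkeeping that $P_{uv}$ survives in $T^*$, details the paper leaves implicit.
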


\begin{corollary}\label{cor-3-2-2}
Under the assumption of Lemma \ref{thm-3-2-0}. If $Aut(T^*)$ is ordinary  and  $T_{S(u)}\not\simeq T_{S(v)}$, then $T\approx T'$ but  $T\not\cong T'$.
\end{corollary}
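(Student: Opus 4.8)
The plan is to deduce this corollary directly from Theorem~\ref{thm-3-2-1}, whose two hypotheses are ``$T_{S(u)}\not\simeq T_{S(v)}$'' and ``$u$ and $v$ are not similar in $T^*$''. The first is assumed outright, so the only thing to verify is that ``$Aut(T^*)$ is ordinary'' forces the second. Thus I would phrase the corollary as a specialization: replace the abstract non-similarity condition of Theorem~\ref{thm-3-2-1} by the more checkable structural condition that $T^*$ has no nontrivial symmetry.

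The key step is the translation of ``ordinary'' into non-similarity. Recall that $u$ and $v$ are \emph{similar} in $T^*$ precisely when some $\alpha^*\in Aut(T^*)$ contains the transposition $(u\ v)$, i.e. $\alpha^*(u)=v$ and $\alpha^*(v)=u$. Reading ``$Aut(T^*)$ is ordinary'' as ``$Aut(T^*)$ is trivial'' (the identity automorphism group, so that $T^*$ is an asymmetric/identity tree), the only automorphism available is the identity, which fixes every vertex. Since $u$ and $v$ are two \emph{distinct} vertices of $T^*$ — they survive the deletion of the balanced components and remain joined by the path $P_{uv}$ — no element of $Aut(T^*)$ can send $u$ to $v$. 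Hence $u$ and $v$ are not similar in $T^*$.

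With both hypotheses of Theorem~\ref{thm-3-2-1} now in place, I would simply invoke that theorem to conclude $T\approx T'$ but $T\not\cong T'$. For completeness I would note that the relation $T\approx T'$ is in any case immediate from Lemma~\ref{thm-3-2-0} (equivalently Lemma~\ref{lem-2-3}), since $T'$ is by construction a branch-exchange of $T$ along the balanced components $T_{S(u)}$ and $T_{S(v)}$; only the non-isomorphism $T\not\cong T'$ requires the extra hypotheses.

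The argument is short and presents no genuine obstacle; the single point demanding care is the interpretation of ``ordinary''. Under the trivial-group reading, ruling out the transposition $(u\ v)$ is automatic, which is exactly what is needed. If ``ordinary'' were instead intended in some weaker sense, the burden would shift to showing that whichever nonidentity automorphisms of $T^*$ persist still cannot interchange $u$ and $v$ — but this is precisely the subtlety the trivial-group hypothesis is designed to eliminate, so no such additional analysis arises here.
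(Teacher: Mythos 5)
Your proposal is correct and matches the paper's intended derivation: the paper states that Corollary \ref{cor-3-2-2} follows immediately from Lemma \ref{thm-3-2-0} (equivalently, via Theorem \ref{thm-3-2-1}), and the only content is exactly the translation you supply — reading ``ordinary'' as trivial automorphism group, so that no $\alpha^*\in Aut(T^*)$ can contain the transposition $(u\ v)$, whence $u$ and $v$ are not similar in $T^*$ and Theorem \ref{thm-3-2-1} applies. Nothing further is needed.
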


\begin{example}\label{TT-exa-1}
Let $T^*$ be the tree with $u,v\in V(T^*)$ as shown in Figure \ref{fig-exa}. Let $T_x=P_3$, $x$ be the centre vertex of $P_3$, and $T_y=P_3$, $y$ be the one endpoint of $P_3$. Note that $T_x\not\simeq T_y$. Construct the trees $T$  and $T'$ as shown in Figure \ref{fig-exa}. Recall that $T^*_u(P_{uv})=P_3\not\simeq P_1=T^*_v(P_{uv})$. By Corollary \ref{cor-3-2-0}, we have $T\approx T'$ but  $T\not\cong T'$, where $\mathbf{r}(T)=\mathbf{r}(T')=(7,1,3,0,1,0,1)$.
\end{example}

\begin{figure}[h]
  \centering
\unitlength 0.8mm 
\linethickness{0.4pt}
\ifx\plotpoint\undefined\newsavebox{\plotpoint}\fi 
\begin{picture}(102.169,39.682)(0,0)
\put(8.619,17.818){\line(1,0){25.858}}
\multiput(8.619,17.187)(-.03851145,-.033694656){131}{\line(-1,0){.03851145}}
\put(21.863,17.818){\line(0,-1){8.199}}
\put(3.574,12.983){\circle*{1.2}}
\put(8.829,17.187){\circle*{1.2}}
\put(15.346,17.818){\line(0,1){7.778}}
\put(15.346,26.017){\circle*{1.2}}
\put(15.557,17.608){\circle*{1.2}}
\put(22.074,17.608){\circle*{1.2}}
\put(22.074,9.619){\circle*{1.2}}
\put(28.17,17.818){\circle*{1.2}}
\put(34.477,17.608){\circle*{1.2}}
\multiput(8.619,17.187)(.03370229,-.035305344){131}{\line(0,-1){.035305344}}
\put(12.824,12.352){\circle*{1.2}}
\put(15,14.5){\tiny$u$}
\put(28,14.5){\tiny$v$}
\put(12,25.176){\tiny$x$}
\put(25,24){\tiny$y$}
\put(20.602,12.352){\oval(41.204,15.977)[]}
\multiput(11.562,32.534)(.03364,-.053824){125}{\line(0,-1){.053824}}
\put(15.767,25.806){\line(0,1){7.148}}
\put(11.352,31.903){\circle*{1.2}}
\put(15.977,32.323){\circle*{1.2}}
\put(13.454,30.642){\oval(9.67,16.818)[]}
\put(28.38,24.755){\circle*{1.2}}
\put(28.17,28.96){\circle*{1.2}}
\put(27.75,33.585){\circle*{1.2}}
\put(28.17,30.221){\oval(9.67,16.818)[]}
\put(28.17,20.341){\line(0,1){13.875}}
\put(12,35){\scriptsize$T_x$}
\put(27,35){\scriptsize$T_y$}
\put(27.96,8.358){\scriptsize$T^*$}
\put(20.602,0){\scriptsize$T$}
\put(69.584,18.449){\line(1,0){25.858}}
\multiput(69.584,17.818)(-.03851145,-.033694656){131}{\line(-1,0){.03851145}}
\put(82.828,18.449){\line(0,-1){8.199}}
\put(64.539,13.614){\circle*{1.2}}
\put(69.794,17.818){\circle*{1.2}}
\put(76.311,26.648){\circle*{1.2}}
\put(76.522,18.239){\circle*{1.2}}
\put(83.039,18.239){\circle*{1.25}}
\put(83.039,10.25){\circle*{1.2}}
\put(89.135,18.449){\circle*{1.2}}
\put(95.442,18.239){\circle*{1.2}}
\multiput(69.584,17.818)(.03370229,-.035305344){131}{\line(0,-1){.035305344}}
\put(73.789,12.983){\circle*{1.2}}
\put(76,15){\tiny$u$}
\put(89,15){\tiny$v$}
\put(73,25.807){\tiny$x$}
\put(86,25){\tiny$y$}
\put(81.567,12.983){\oval(41.204,15.977)[]}
\multiput(72.527,33.165)(.03364,-.053824){125}{\line(0,-1){.053824}}
\put(76.732,26.437){\line(0,1){7.148}}
\put(72.317,32.534){\circle*{1.2}}
\put(76.942,32.954){\circle*{1.2}}
\put(74.419,31.273){\oval(9.67,16.818)[]}
\put(89.345,25.386){\circle*{1.2}}
\put(89.135,29.591){\circle*{1.2}}
\put(88.715,34.216){\circle*{1.2}}
\put(89.135,30.852){\oval(9.67,16.818)[]}
\put(72,36){\scriptsize$T_x$}
\put(88,36){\scriptsize$T_y$}
\put(88.925,8.989){\scriptsize$T^*$}
\put(81.567,.3){\scriptsize$T'$}
\multiput(76.732,26.647)(.050452,-.033636){250}{\line(1,0){.050452}}
\multiput(88.925,34.005)(.0323077,-.7438462){13}{\line(0,-1){.7438462}}
\multiput(89.345,24.335)(-.073171271,-.033685083){181}{\line(-1,0){.073171271}}
\put(28.244,21.068){\line(0,-1){3.27}}
\end{picture}
  \caption{$T$ and $T'$}\label{fig-exa}
\end{figure}

\begin{remark}
As Example \ref{TT-exa-1}, by using Corollary \ref{cor-3-2-0} or Corollary \ref{cor-3-2-2}, one can simply construct infinite pairs of EDV-equivalent trees.
\end{remark}

\section{The DEDV trees}
In this section we completely determine the DEDV-starlike trees, and  some other DEDV-trees are also characterized.

Recall that a tree $T\in \mathcal{T}_n$ is a  DEDV-tree if $T'\cong  T$ whenever $T'\approx T$. Denote by $\mathcal{B}(T)$  the set of the trees that are constructed from $T$ by branch-exchange transformation. To exactly, $T'\in \mathcal{B}(T)$ if and only if there exist $T=T_1$, $T_2$,\ldots,$T_t=T'$ such that $T_{i+1}$ is a branch-exchange of $T_i$ for $i=1,\ldots,t-1$. By Lemma \ref{lem-2-3}, we have   $T'\approx T$ for $T'\in \mathcal{B}(T)$.  According to definition we have the following result.

\begin{lemma}\label{thm-3-3}
If  $T\in \mathcal{T}_n$ is a  DEDV-tree, then  $T'\cong T$ whenever $T'\in \mathcal{B}(T)$, i.e., $\mathcal{B}(T)=\{T\}$.
\end{lemma}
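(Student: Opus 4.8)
The plan is to reduce the statement directly to the definition of a DEDV-tree together with the invariance of the edge division vector under branch-exchange established in Lemma~\ref{lem-2-3}. The only substantive observation is that membership in $\mathcal{B}(T)$ is defined through a finite chain of branch-exchanges, so the first task is to propagate the relation $\approx$ along such a chain.

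First I would fix an arbitrary $T'\in\mathcal{B}(T)$. By the definition of $\mathcal{B}(T)$ there is a sequence $T=T_1,T_2,\ldots,T_t=T'$ in which each $T_{i+1}$ is a branch-exchange of $T_i$. Applying Lemma~\ref{lem-2-3} to each consecutive pair gives $T_i\approx T_{i+1}$, that is $\mathbf{r}(T_i)=\mathbf{r}(T_{i+1})$, for every $i=1,\ldots,t-1$. Since equality of edge division vectors is transitive, these identities combine to yield $\mathbf{r}(T)=\mathbf{r}(T')$, i.e.\ $T\approx T'$.

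Next I would invoke the hypothesis that $T$ is a DEDV-tree. By the defining property of such a tree, every $T'\in\mathcal{T}_n$ with $T'\approx T$ must satisfy $T'\cong T$. As $T'\in\mathcal{B}(T)\subseteq\mathcal{T}_n$ and we have just shown $T'\approx T$, it follows that $T'\cong T$. Because $T'$ was arbitrary, every member of $\mathcal{B}(T)$ is isomorphic to $T$, which is precisely the assertion $\mathcal{B}(T)=\{T\}$ (read up to isomorphism).

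There is no genuine obstacle here: the argument is a bookkeeping application of the definitions. The one point requiring care is the transitivity step that lifts the pairwise relation $\approx$ furnished by Lemma~\ref{lem-2-3} to the endpoints of the chain; one should also note explicitly that $\mathcal{B}(T)\subseteq\mathcal{T}_n$, so that the DEDV-property is legitimately applicable to each $T'$.
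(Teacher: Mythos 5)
Your proposal is correct and follows essentially the same route as the paper, which treats the lemma as immediate from the definitions: the paper notes just before the statement that Lemma~\ref{lem-2-3}, applied along the defining chain of branch-exchanges, yields $T'\approx T$ for every $T'\in\mathcal{B}(T)$, and then the DEDV property gives $T'\cong T$. Your only additions are making the transitivity step and the inclusion $\mathcal{B}(T)\subseteq\mathcal{T}_n$ explicit, which the paper leaves tacit.
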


In the next of this section, we will characterize some classes of DEDV-tree.
For a tree $T\in \mathcal{T}_n$, let $e=uv\in E(T)$, recall that $T_u(e)$ and $T_v(e)$ be respectively the two components of $T-e$ containing root vertices $u$ and $v$, and $\mu_T(e)=\min\{|T_u(e)|,|T_v(e)|\}$.  For any $u\in V(T)$, there exists at most one edge $e$ incident to $u$ such that $|T_u(e)|\leq\lfloor\frac{n}{2}\rfloor$. We call $T_u(e)$ a pendent subtree of $T$ (with respect to root vertex $u$) if  $\mu_T(e)=|T_u(e)|$. We will write only $T_u$ when it does not lead to confusion. In particular, if $T_u$ is really a path and $d_T(u)=2$, it is called a pendent path; if it is a star with $u$ as its center vertex, it is called a pendent star.
A pendent subtree $T_u$ is maximal if $u$ is suspended from a branching vertex (or there is no any pendent subtree of order $|T_u|+1$ including $T_u$). Let $\mathbf{r}(T)=(r_1,r_2,\ldots,r_{\lfloor\frac{n}{2}\rfloor})$. Note that $r_i=|\{e\in E(T)\mid \mu_T(e)=i\}|$, thus $r_i$ is just the number of pendent subtrees of $T$ with order $|T_u|=i$. Particularly,  if $T_u$ is a pendent subtree with $|T_u|=1$ then  $u$ is a leaf of $T$, in this case $T_u$ contributes one to $r_1$; if $|T_u|=2$ then subtree $T_u$ is pendent path $P_2$, which contributes one to $r_2$. Naturally we have

\begin{claim}\label{r-claim-1}
For   $T,T'\in \mathcal{T}_n$ let $\mathbf{r}(T)=(r_1,r_2,\ldots,r_{\lfloor\frac{n}{2}\rfloor})=\mathbf{r}(T')$. We have \\
(a) $T$ and $T'$ have exactly $r_1$ pendent vertices;\\
(b) $T$  and $T'$ have exactly $r_2$ pendent $P_2$;\\
(c) $T$ and $T'$ have the same number of maximal pendent paths, the length of which is equal for the path with the smallest length.\\
(d) $r_1+r_2+\cdots+r_{\lfloor\frac{n}{2}\rfloor}=n-1=|E(T)|=|E(T')|$.
\end{claim}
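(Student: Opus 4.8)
The plan is to prove the four statements of Claim~\ref{r-claim-1} by directly unpacking the definition of the edge division vector and the characterization of $r_i$ as a count of pendent subtrees, which the excerpt has already established just before the claim. Since $\mathbf{r}(T)=\mathbf{r}(T')$ by hypothesis, every statement reduces to showing that the quantity in question is a \emph{function of the vector} $\mathbf{r}$ alone; once this is done, equality of the vectors forces equality of the quantity for $T$ and $T'$. So for each part I would argue that the relevant graph-theoretic count can be read off from the coordinates $r_1,r_2,\ldots,r_{\lfloor n/2\rfloor}$.

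For part (a), I would recall the observation made in the preliminaries that $r_1(T)$ is exactly the number of pendent edges, equivalently the number of leaves: an edge $e=uv$ with $\mu_T(e)=1$ is precisely one where one of the two components of $T-e$ is a single vertex, i.e.\ $e$ is incident to a leaf, and distinct leaves give distinct such edges. Hence both $T$ and $T'$ have exactly $r_1$ leaves. For part (b), I would use the interpretation of $r_i$ as the number of pendent subtrees $T_u$ with $|T_u|=i$; a pendent subtree of order $2$ is a pendent $P_2$, so $r_2$ counts exactly the pendent $P_2$'s in each tree, giving the claim. Part (d) is the cleanest: summing $r_i$ over all $i$ counts each edge of $T$ exactly once, since every edge $e$ contributes to precisely one coordinate, namely $r_{\mu_T(e)}$; therefore $\sum_i r_i=|E(T)|=n-1$ because $T$ is a tree on $n$ vertices, and the same holds for $T'$.

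The main obstacle is part (c), the statement about maximal pendent paths, which is more subtle than a direct coordinate read-off. The plan here is to analyze the structure of the coordinates contributed by a maximal pendent path. A maximal pendent path of length $\ell$ (hanging from a branching vertex) contributes a consecutive block of values $\mu(e)=1,2,\ldots,\ell$ to the edge division vector, one for each edge along the path, since cutting the $j$-th edge from the pendent end separates off a subtree of order $j$. The length of the shortest maximal pendent path is then governed by the smallest index where the ``layering'' pattern of pendent subtrees breaks, which can be extracted from the vector $\mathbf{r}$: I would track the number of pendent subtrees of each order and argue that the minimum maximal-pendent-path length, and the total count of maximal pendent paths, are both determined by $\mathbf{r}$ via the values $r_1,r_2,\ldots$. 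The delicate point is that a pendent subtree of order $i$ need not be a path, so $r_i$ mixes contributions from paths and non-path pendent subtrees; I would therefore isolate the path contributions by an inductive argument on $i$, noting that the smallest $i$ for which the count of pendent subtrees of order $i$ fails to match the count of order $i-1$ (appropriately interpreted) pins down the shortest maximal pendent path, and that this comparison depends only on the entries of $\mathbf{r}$. Since $\mathbf{r}(T)=\mathbf{r}(T')$, the conclusion follows for both trees simultaneously. I expect this bookkeeping to be the one place requiring genuine care, whereas (a), (b), (d) are essentially immediate from the definitions recalled in the text.
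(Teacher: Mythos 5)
Your proposal is correct and follows essentially the same route as the paper: the paper states Claim \ref{r-claim-1} with no separate proof, presenting it as an immediate consequence of the preceding observation that $r_i$ counts the pendent subtrees of order $i$, which is precisely the interpretation you unpack for parts (a), (b) and (d). For part (c) you actually supply more detail than the paper does, but your key idea --- that same-order pendent subtrees are pairwise disjoint, each contains a leaf, and the first index at which the constancy $r_1=r_2=\cdots=r_s$ breaks pins down the number of maximal pendent paths and the shortest one --- is exactly the counting argument the paper itself deploys immediately afterwards in proving Lemma \ref{r-lem-1}.
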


Since the  pendent subtree of order three is either $P_3$ or $S_3$,  $r_3$ is the total number of the  pendent subtrees  $P_3$ and $S_3$. However, if $r_1=r_2=r_3=k$ then $T$ must contain exactly $k$ pendent paths $P_3$, since otherwise if $T$ has a pendent subtree  $S_3$ then $T$ has at least $k+1$ pendent vertices which contradicts $r_1=k$. In general, let $T_{n;k}$  be the starlike tree with branching vertex $u$ such that $T_{n;k}-u=k*P_s$(i.e., $k$ copies of $P_s$), where $ks+1=n$. We have the following result.

\begin{lemma}\label{r-lem-1}
Let $\mathbf{r}(T)=(r_1,r_2,\ldots,r_{\lfloor\frac{n}{2}\rfloor})$ be the edge division vector of $T\in \mathcal{T}_n$. Suppose that $r_1=\cdots=r_s=k$. Then $T$ has exactly $k$ pendent paths $P_s$ with respect to their endpoints. Moreover \\
(a)  If $r_i=0$ for $i>s$ then $T=T_{n;k}$.\\
(b)  If $r_{s+1}=t<k$ and $r_i=0$ for $i>s+1$ then $T$ is a starlike tree with a center vertex $u$ such that $T-u=(k-t)*P_s\cup t*P_{s+1}$, where $n-1=s k+t$.
\end{lemma}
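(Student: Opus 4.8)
The plan is to translate everything into the language of pendent subtrees, as set up in the discussion preceding the lemma: $r_i$ equals the number of pendent subtrees of $T$ of order $i$, and by Claim \ref{r-claim-1}(d) we have $\sum_i r_i=n-1$. Two elementary facts will be used throughout. First, any two pendent subtrees of a tree are either disjoint or nested, so two \emph{distinct} pendent subtrees of the \emph{same} order are necessarily vertex-disjoint. Second, if a pendent subtree $T_w$ of order $i$ contains a leaf $\ell$ whose inward path $\ell=w_1w_2\cdots$ has $w_2,\ldots,w_i$ all of degree $2$, then a count of vertices along this path (the path from $\ell$ to the root of $T_w$ lies inside $T_w$) forces $T_w$ to equal the path $w_1\cdots w_i$, i.e. $T_w\cong P_i$ with its non-leaf endpoint as root. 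This second fact is the quantitative form of the ``no pendent $S_3$'' observation made just before the lemma.

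First I would prove the opening assertion by induction on $s$, strengthening it to: \emph{the pendent subtrees of order at most $s$ are exactly $k$ pairwise-disjoint clean paths, one rooted at each leaf, whose first $s$ vertices have degrees $1,2,\ldots,2$.} The base $s=1$ is just $r_1=k$. For the inductive step, assuming the statement at level $s-1$, I would classify the $k$ leaves: call $\ell$ of type A if the vertex $v_s$ following its clean $P_{s-1}$ has degree $2$ (so $\ell$ extends to a pendent $P_s$), and of type B otherwise (that vertex branches, and then no pendent subtree of order exactly $s$ contains $\ell$). Using the second fact together with the inductive structure at level $s-1$, every pendent subtree of order $s$ is forced to be one of the type-A paths $P_s$; this yields a bijection between pendent subtrees of order $s$ and type-A leaves, so $r_s$ equals the number of type-A leaves. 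Since $r_s=k$ and $T$ has only $k$ leaves, all leaves are of type A, which is precisely the assertion at level $s$. I expect this forcing step — pinning down that an order-$s$ pendent subtree cannot hide a branching vertex, and making the bijection airtight — to be the main obstacle; the rest is bookkeeping.

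For part (a), combining $r_i=0$ for $i>s$ with $\sum_i r_i=n-1$ gives $sk=n-1$, so $n=sk+1$. By the opening assertion the $k$ pendent paths $P_s$ are pairwise disjoint and account for $sk=n-1$ vertices, leaving exactly one vertex $u$. Each path attaches, through the outward edge of its non-leaf endpoint, to a vertex outside that path; since interior path vertices have degree $2$ and path-leaves have degree $1$, this neighbour cannot lie on any path and must therefore be $u$. Hence all $k$ paths are suspended at the common vertex $u$, giving $T-u=k*P_s$, i.e. $T=T_{n;k}$.

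Part (b) runs in parallel at one level higher. Applying the second fact with the clean $P_s$ structure already established shows that every pendent subtree of order $s+1$ is again a clean path $P_{s+1}$ hanging off a leaf, so $r_{s+1}=t$ means exactly $t$ of the $k$ leaves extend to $P_{s+1}$ while the other $k-t$ remain capped at $P_s$. From $r_i=0$ for $i>s+1$ and $\sum_i r_i=n-1$ we obtain $sk+t=n-1$; the $t$ paths $P_{s+1}$ and the $k-t$ paths $P_s$ are pairwise disjoint (the same-order ones by disjointness, and a $P_s$ cannot sit inside a $P_{s+1}$ since the latter contains only one leaf) and use $sk+t$ vertices, leaving a single leftover vertex $u$. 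The same degree argument as in (a) forces every path to attach to $u$, whence $T-u=(k-t)*P_s\cup t*P_{s+1}$, as claimed.
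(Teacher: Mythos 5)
Your proposal is correct in substance and follows the same overall skeleton as the paper's proof: force every pendent subtree of order at most $s$ (respectively $s+1$) to be a pendent path rooted at a leaf, then use Claim~\ref{r-claim-1}(d) to see that these pairwise disjoint paths occupy all but one vertex, at which they must all be suspended. What differs is the mechanism at the forcing step. The paper handles the opening claim by a leaf count (a non-path pendent subtree would give $T$ more than $k$ leaves, contradicting $r_1=k$, cited from the $s=3$ discussion), and in (b) it excludes a branching root of an order-$(s+1)$ pendent subtree by splitting that subtree at its root into two smaller pendent subtrees; you instead run a strengthened induction recording that the first $s$ vertices inward from each leaf have degrees $1,2,\ldots,2$, and obtain the forcing from the disjoint-or-nested property together with path-following. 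Your route is more rigorous exactly where the paper is thinnest: the paper's ``by the arguments as $s=3$ as above'' conceals the induction that your type-A/type-B classification makes explicit, and your degree bookkeeping supplies the final assembly argument that the paper merely asserts. Two caveats, both minor and fixable in a line. In (a), your claim that the root's outward neighbour ``cannot lie on any path'' overlooks that the root of one clean path has a free slot and could a priori be adjacent to the root of another; this is excluded because two attached roots would make the union of their paths a connected component, forcing $T=P_{2s}$ with $k=2$, contradicting $n=sk+1$. In (b), you (exactly like the paper) equate $r_{s+1}$ with the number of pendent subtrees of order $s+1$; this fails when $n$ is even and $s+1=n/2$, where the middle edge carries two pendent subtrees of that order --- under the hypotheses this forces $k=2$ and $t=1$, so $T$ has two leaves and is the path $P_{2s+2}$, for which the conclusion is immediate, but the case should be split off explicitly.
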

\begin{proof}
By the arguments as $s=3$ as above, we first claim that $T$  has exactly $k$ pendent $P_s$ since otherwise $T$ has leaves more than $k$.

(a) Since $r_i=0$ for $i>s$, we have $r_1+\cdots+r_s=n-1$ by  Claim \ref{r-claim-1}(d). It implies that the endpoints of these $k$ pendent $P_s$ join at a center vertex $u$, that is $T=T_{n;k}$.

(b) Now  $T$  contains  exactly $k$ pendent $P_s$ each of them contributes  one leaf to $T$. Also note that $r_{s+1}=t$, we see that $T$ has $t$ pendent subtrees of order $s+1$. Let $T_v$ be such a pendent subtree with respect to root $v$, where $|T_v|=s+1$. If $d_{T_v}(v)\ge2$, let $x_1$ and $x_2$ be two adjacent vertices of $v$ in $T_v$, then $F=T_v$ has two subtree $F_{x_1}$ and $F_{x_2}$, where $F_{x_i}$ is the component in $F-x_iv$ containing $x_i$ for $i=1,2$. Clearly  $|F_{x_i}|< s$.  On the other aspect, since $F_{x_i}$ is also a subtree of $T$, we have $F_{x_i}=P_{l_i}$ for $l_i\le s$. Note that $s+1=|F|\ge l_1+l_2+1$, we have $l_i<s$. Thus $F_{x_i}$ must be a subtree $P_s$, however $F$ does not contain any $P_s$, a contradiction.
Therefore,  $d_{T_v}(v)=1$. Thus $v$ has a unique adjacent vertex $y$ in $T_v$ such that $T_v-vy$ has  a pendent subtree  of order $s$ that will be $P_s$ with root vertex $y$,  i.e., $T_v-vy=P_s\cup\{v\}$. It follows that  $T_v=P_{s+1}$. Since
$n-1=s k+t=(k-t)s+t(s+1)$, $r_{s+1}=t$ and $r_i=0$ for $i>s+1$, we have $(r_1+r_2+\cdots+r_{s+1})=(k-t)s+t(s+1)$. It means that $T-u=(k-t)*P_s\cup t*P_{s+1}$.
\end{proof}

Let $T_{l_1,l_2,\ldots,l_k}$ be a starlike tree with center vertex $u$ such that
\begin{equation}\label{r-eq-1}T_{l_1,l_2,\ldots,l_k}-u=\cup_{i=1}^k P_{l_i}\end{equation}
where $P_{l_i}$ is the path with  $l_i$ vertices. $T_{l_1,l_2,\ldots,l_k}$ is called balanced if  $|l_i-l_j|\le 1$ for $1\leq i,j \leq k$. Clearly, the starlike tree $T_{l_1,l_2,\ldots,l_k}$ described  in (a) and (b) of Lemma \ref{r-lem-1} are balanced. Thus Lemma \ref{r-lem-1} implies that  the edge division vector $\mathbf{r}(T)=(r_1,r_2,\ldots,r_{\lfloor\frac{n}{2}\rfloor})$ uniquely determine balanced starlike trees. Thus we have

\begin{corollary}
The balanced starlike trees are  DEDV-trees. Particularly, star $S_n$ and path $P_n$ are DEDV-trees.
\end{corollary}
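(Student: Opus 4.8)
The plan is to show that the edge division vector of any balanced starlike tree falls squarely under one of the two cases of Lemma \ref{r-lem-1}, so that the lemma itself does all the work: it forces every EDV-equivalent tree to be the very same starlike tree.

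First I would fix a balanced starlike tree $T=T_{l_1,\ldots,l_k}$ with center $u$. By the balance condition $|l_i-l_j|\le 1$, the branch lengths take at most two consecutive values; writing $s$ for the smaller one and $t$ for the number of branches of length $s+1$, I may assume $0\le t<k$ (if all branches have the same length, then $t=0$ and $T=T_{n;k}$; if they were all of length $s+1$ I would simply rename $s$). I would then compute $\mathbf{r}(T)$ directly: along a pendent path $P_{l}$ rooted at a neighbor of $u$, the edge at distance $j$ from the leaf cuts off a pendent subtree of size $j$, hence contributes $\mu=j$ provided $j\le\lfloor\frac{n}{2}\rfloor$. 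Summing over all $k$ branches gives $r_i=|\{j:l_j\ge i\}|$, which equals $k$ for $1\le i\le s$, equals $t$ for $i=s+1$, and vanishes for $i>s+1$. Thus $r_1=\cdots=r_s=k$ and $r_{s+1}=t<k$, which is precisely the hypothesis of Lemma \ref{r-lem-1}(b) (or of part (a) when $t=0$).

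Next, for any $T'\in\mathcal{T}_n$ with $\mathbf{r}(T')=\mathbf{r}(T)$, Lemma \ref{r-lem-1} applies verbatim to $T'$ and forces $T'-u=(k-t)\ast P_s\cup t\ast P_{s+1}$, i.e.\ $T'$ is the starlike tree with the same multiset of branch lengths as $T$. Since a starlike tree is determined up to isomorphism by its multiset of branch lengths, this yields $T'\cong T$, which is exactly the assertion that $T$ is a DEDV-tree. The two named special cases then fall out immediately: the star $S_n=T_{1,1,\ldots,1}$ has all branch lengths equal to $1$ (so $s=1$, $t=0$), and the path $P_n$ is the balanced starlike tree with $k=2$ obtained by taking its central vertex as $u$, its two branches differing in length by at most one.

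The hard part is not conceptual but lies in the edge-division bookkeeping of the second paragraph: one must check that each pendent path of length $l$ really contributes the full set of values $1,2,\ldots,l$ to the $\mu$-spectrum, and in particular that the path is always the smaller of the two sides of each of its edges, namely $l\le\lfloor\frac{n}{2}\rfloor$. For $k\ge 3$ this holds automatically, but the boundary case $k=2$ (the path $P_n$, where the longest branch may have size exactly $\lfloor\frac{n}{2}\rfloor$) has to be verified separately so that no $\mu$-value is miscounted. Once this single computation is settled, the reduction to Lemma \ref{r-lem-1} is immediate and the corollary follows.
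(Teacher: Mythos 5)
Your proposal is correct and follows essentially the same route as the paper: the paper also deduces the corollary directly from Lemma \ref{r-lem-1}, observing that a balanced starlike tree has an edge division vector of exactly the form in parts (a)/(b) of that lemma, so any tree sharing that vector is forced to be the same starlike tree. Your explicit computation of $\mathbf{r}(T)$ and the check that $\mu=j$ still holds at the boundary $j=\lfloor\frac{n}{2}\rfloor$ (the $k=2$ case of $P_n$) simply spell out what the paper dismisses as ``clearly,'' and both checks are sound.
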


However, the following result indicates that the starlike tree is not necessary to be balanced for a DEDV-tree. Let $(m*n)=(\overbrace{n,n,\ldots,n}^m)$ denote a sequence of $n$ of size $m$.

\begin{proposition}\label{pro-s-3}
Starlike tree $T_{l_1,l_2,l_3}$ is a  DEDV-tree.
\end{proposition}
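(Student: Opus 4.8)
The plan is to reduce an arbitrary EDV-equivalent competitor to a three-legged starlike tree and then show that the edge division vector of such a tree already encodes the three leg lengths. Throughout I assume $l_1\le l_2\le l_3$ and write $n=l_1+l_2+l_3+1$. First I would constrain the shape of any competitor: suppose $T'\in\mathcal{T}_n$ satisfies $T'\approx T_{l_1,l_2,l_3}$. Since $T_{l_1,l_2,l_3}$ has exactly three leaves, Claim \ref{r-claim-1}(a) gives $r_1(T')=3$, so $T'$ also has exactly three pendent vertices. For a tree the degree identity $\sum_{v}(d(v)-2)=-2$ then forces $\sum_{d(v)\ge 3}(d(v)-2)=(\#\text{leaves})-2=1$, so $T'$ has a single branching vertex, of degree $3$, and is therefore a starlike tree $T_{m_1,m_2,m_3}$ with $m_1\le m_2\le m_3$. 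It remains to prove that $\mathbf{r}(T_{a,b,c})$ determines the multiset $\{a,b,c\}$.

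The key step is to compute the initial segment of $\mathbf{r}(T_{a,b,c})$ for $a\le b\le c$. An edge of a leg at vertex-distance $j$ from its leaf splits off $j$ vertices, so its $\mu$-value is $\min\{j,n-j\}$. On the two shorter legs every such $j$ satisfies $j\le b<\lfloor\frac{n}{2}\rfloor$ — a strict inequality I would verify from $2b\le b+c\le n-1$, ruling out equality using $a\ge 1$ — so these legs contribute the values $1,\dots,a$ and $1,\dots,b$, one each. On the longest leg the value $\min\{j,n-j\}$ equals $j$ until $j$ passes $\lfloor\frac{n}{2}\rfloor$ and only then ``folds back''; crucially the folded values are all at least $n-c=a+b+1$. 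Consequently, for every index $m\le b+1$ the long leg contributes exactly the edge $j=m$ and no folded edge, since folding would require $m\ge a+b+1>b+1$. Collecting contributions gives, independently of the size of $c$,
\[
r_m=3 \ \ (1\le m\le a),\qquad r_m=2 \ \ (a<m\le b),\qquad r_{b+1}\le 1,
\]
where $r_{b+1}=1$ if $b<c$ and $r_{b+1}=0$ if $b=c$ (when the long leg has no edge at position $b+1$).

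This initial segment pins down $a$ and $b$ through the clean thresholds
\[
a=\min\{m:r_m\le 2\}-1,\qquad b=\min\{m:r_m\le 1\}-1,
\]
both of which are forced by $r_m\ge 3$ for $m\le a$, $r_m\ge 2$ for $m\le b$, and the drops established above. Finally $c$ follows from $\sum_m r_m=n-1=|E(T')|$ by Claim \ref{r-claim-1}(d), namely $c=\bigl(\sum_m r_m\bigr)-a-b$. Hence $\{m_1,m_2,m_3\}=\{l_1,l_2,l_3\}$, so $T'\cong T_{l_1,l_2,l_3}$ and the tree is DEDV. I expect the main obstacle to be the bookkeeping for the longest leg: one must locate exactly where its $\mu$-values fold and confirm that the fold begins strictly beyond index $b+1$, which is precisely where the strict inequality $l_2<\lfloor\frac{n}{2}\rfloor$ (ultimately $a\ge 1$) is essential. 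Once that is secured, the read-off formulas above are immediate and need no further case analysis, even in the degenerate configurations $a=b$ or $b=c$.
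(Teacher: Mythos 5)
Your proof is correct, and its skeleton is the same as the paper's: use Claim \ref{r-claim-1}(a) to force any competitor $T'$ with $\mathbf{r}(T')=\mathbf{r}(T_{l_1,l_2,l_3})$ to have exactly three leaves, conclude that $T'$ is again a three-legged starlike tree (the paper simply asserts this equivalence, you derive it from the degree-sum identity), and then show that the edge division vector determines the three leg lengths. The difference lies in how that last step is carried out, and there your version is genuinely stronger. The paper writes $\mathbf{r}(T_{l_1,l_2,l_3})=(l_1*3,(l_2-l_1)*2,(l_3-l_2)*1,0,\ldots,0)$ and matches coefficients; but this formula is valid only when $l_3\le l_1+l_2+1$. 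When the longest leg passes half the order, its $\mu$-values fold back --- exactly the phenomenon you track --- and the formula fails: for instance $T_{1,1,4}$ has order $7$ and $\mathbf{r}=(3,1,2)$, not $(3,1,1,1)$ as the paper's expression would give (that vector is not even of the admissible length $\lfloor 7/2\rfloor=3$). Your argument avoids this trap by reading off $a$ and $b$ only from the initial segment $m\le b+1$, where folded edges cannot contribute because their values are at least $n-c=a+b+1>b+1$ (this is where your observation that $a\ge 1$ is essential enters), and by recovering $c$ from the total $\sum_m r_m=n-1$ via Claim \ref{r-claim-1}(d) rather than from the tail of the vector. Thus your thresholds $a=\min\{m:r_m\le 2\}-1$, $b=\min\{m:r_m\le 1\}-1$, $c=(n-1)-a-b$ establish the proposition for all leg lengths, whereas the paper's computation, read literally, covers only the non-folding case; the proposition itself is true in general, and your argument is the one that actually proves it.
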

\begin{proof}
A tree has three leaves if and only if it is a $T_{l_1,l_2,l_3}=(uv_1+P_{l_1})+(uv_2+P_{l_2})+(uv_3+P_{l_3})$, where $P_{l_i}$ is a path with $l_i$ vertices and $1\le l_1\le l_2\le l_3$. According to definition, we have
$$\mathbf{r}=\mathbf{r}(T_{l_1,l_2,l_3})=(l_1*3, (l_2-l_1)*2,(l_3-l_2)*1,0,\ldots,0).
$$
Thus a tree $T$ with $\mathbf{r}$ as above must have three leaves by Claim \ref{r-claim-1}(a), and so $T=(uv_1+P_{l_1'})+(uv_2+P_{l_2'})+(uv_3+P_{l_3'})$ where $P_{l_i'}$ is a path with $l_i'$ vertices and $1\le l_1'\le l_2'\le l_3'$. Hence $\mathbf{r}(T)=(l_1'*3, (l_2'-l_1')*2,(l_3'-l_2')*1,0,\cdots,0)=\mathbf{r}$, which leads to $l_i=l_i'$ and so $T\cong T_{l_1,l_2,l_3}$. Hence $T_{l_1,l_2,l_3}$ is a DEDV-tree.
\end{proof}

Let $\mathcal{T}_{n,k}$ be the set of all the starlike trees of the form $T_{l_1,l_2,\ldots,l_k}$ with order $n$. By the similar consideration of Proposition \ref{pro-s-3}, we have the following result.

\begin{lemma}\label{lem-3-2}
Let $T_{l_1,l_2,\ldots,l_k}, T_{l'_1,l'_2,\ldots,l'_k}\in \mathcal{T}_{n,k}$. Then $T_{l_1,l_2,\ldots,l_k}\approx T_{l'_1,l'_2,\ldots,l'_k}$ if and only if $T_{l_1,l_2,\ldots,l_k}\cong T_{l'_1,l'_2,\ldots,l'_k}$.
\end{lemma}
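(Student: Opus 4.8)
The plan is to prove the nontrivial ``only if'' direction by showing that the edge division vector of a starlike tree with $k$ branches determines the multiset of its branch lengths; the ``if'' direction is immediate, since an isomorphism preserves $\mu$ edge by edge and hence preserves $\mathbf{r}$. Write $T=T_{l_1,\dots ,l_k}$ with $1\le l_1\le\cdots\le l_k$ and $l_1+\cdots+l_k=n-1$, and set $c_i=|\{j:l_j\ge i\}|$. This conjugate sequence is non-increasing and recovers the multiset $\{l_j\}$, so it suffices to show $\mathbf{r}$ determines $c$. The cases $k\le 2$ are trivial (every such $T_{l_1,\dots,l_k}$ is the path $P_n$), so I assume $k\ge 3$; then the centre $u$ has degree $k\ge 3$ and each branch is a maximal pendent path.

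First I would record how $\mathbf{r}$ is assembled from $c$. Labelling the edges of branch $j$ by the size $m\in\{1,\dots,l_j\}$ of the leaf-side component they cut off, such an edge has $\mu=\min\{m,n-m\}$. Counting the edges with $\mu(e)=i$ then yields, for $1\le i<\lceil n/2\rceil$,
\[ r_i=c_i+[\,l_k\ge n-i\,], \]
together with $r_{n/2}=c_{n/2}$ when $n$ is even; the bracketed term accounts for the single ``folded'' edge produced by a branch longer than $\lfloor n/2\rfloor$, and the key point is that \emph{at most one} branch can exceed $\lfloor n/2\rfloor$, since $\sum_j l_j=n-1<n$. I expect this folding term to be the main obstacle: when one branch is long, $\mathbf{r}$ no longer reads off $c$ entry by entry, so the inversion must account for the fold rather than the direct reading tacitly used in the three-branch computation of Proposition \ref{pro-s-3}.

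To finish, suppose $\mathbf{r}(T)=\mathbf{r}(T')$ for $T'=T_{l'_1,\dots,l'_k}$ with conjugate $c'$ and largest part $l'_k$, and suppose for contradiction $l_k\ne l'_k$, say $l_k>l'_k$. Subtracting the two displayed identities gives $c'_i=c_i+[\,l'_k<n-i\le l_k\,]$ for $1\le i<\lceil n/2\rceil$; if $l_k\le\lfloor n/2\rfloor$ the bracket vanishes identically, forcing $c=c'$ and $l_k=l'_k$, a contradiction, so in fact $l_k>\lfloor n/2\rfloor$. Put $\alpha=n-l_k$; since $l_k\le n-k$ we have $3\le k\le\alpha\le\lceil n/2\rceil-1$, and $\alpha$ is the least index with $c'_\alpha=c_\alpha+1$ while $c'_{\alpha-1}=c_{\alpha-1}$. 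As $c'$ is non-increasing, $c_{\alpha-1}=c'_{\alpha-1}\ge c'_\alpha=c_\alpha+1$, and $c_{\alpha-1}-c_\alpha$ is exactly the number of branches of $T$ of length $\alpha-1=n-l_k-1$; hence $T$ has such a branch, which is distinct from its longest branch since $\alpha-1<l_k$. But these two branches already have lengths summing to $(n-l_k-1)+l_k=n-1=\sum_j l_j$, leaving no length for the remaining $k-2\ge 1$ branches, which is impossible. Therefore $l_k=l'_k$; substituting back gives $c_i=c'_i$ for all $i<\lceil n/2\rceil$ (and $c_{n/2}=r_{n/2}=c'_{n/2}$ when $n$ is even) together with equal tails $c_i=c'_i=[\,i\le l_k\,]$ for $i>\lfloor n/2\rfloor$. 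Thus $c=c'$, the branch-length multisets coincide, and $T\cong T'$.
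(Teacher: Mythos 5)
Your proof is correct, and it actually does more than the paper does. The paper's entire proof of Lemma \ref{lem-3-2} is the remark that it follows ``by the similar consideration of Proposition \ref{pro-s-3}'', i.e.\ by writing the edge division vector of a member of $\mathcal{T}_{n,k}$ explicitly as $(l_1*k,(l_2-l_1)*(k-1),\ldots,(l_k-l_{k-1})*1,0,\ldots,0)$ and reading the sorted branch lengths off entry by entry. That displayed formula is valid only when the longest branch satisfies $l_k\le\lfloor n/2\rfloor$: otherwise the deep edges of the long branch have $\mu(e)=n-m$ rather than $m$, and the vector is no longer of that weakly decreasing shape (e.g.\ $\mathbf{r}(T_{1,1,7})=(3,1,2,2,1)$ for $n=10$, which the paper's display cannot produce). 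This folding phenomenon is exactly the obstacle you isolate, and your handling of it is the genuinely new content: you encode the lengths by the conjugate sequence $c_i$, prove the exact identity $r_i=c_i+[\,l_k\ge n-i\,]$ for $i<\lceil n/2\rceil$, and then rule out $l_k\ne l'_k$ by the counting contradiction that a branch of length $n-l_k-1$ together with the long branch would already use up all $n-1$ non-central vertices, leaving nothing for the remaining $k-2\ge 1$ branches; I checked the index bookkeeping ($\alpha=n-l_k\ge k\ge 3$, $\alpha\le\lceil n/2\rceil-1$, monotonicity of $c'$, and the tail $c_i=[\,i\le l_k\,]$ for $i>\lfloor n/2\rfloor$, which uses that at most one branch can exceed $\lfloor n/2\rfloor$) and it is sound. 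What the paper's shortcut buys is brevity, and it is harmless where the lemma is actually invoked: in Theorem \ref{st-thm-1} one has $r_i=0$ for $i>l_k$ with $l_k<\lfloor n/2\rfloor$, and a folded competitor would force $r_{\lfloor n/2\rfloor}\ge 1$, so only the unfolded regime arises there. But as a proof of Lemma \ref{lem-3-2} for arbitrary members of $\mathcal{T}_{n,k}$ the paper's reading-off argument has a gap, and your argument closes it.
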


The starlike tree $T_{l_1,l_2,\ldots,l_k}$ described in (\ref{r-eq-1}) is called \emph{weak balanced} if $l_i+l_j\ge\max\{l_q\mid 1\le q\le k\}$ for $1\le i,j\le k$. In general, we have the following result.
\begin{theorem}\label{st-thm-1}
Let $k\ge 4$. The starlike tree $T_{l_1,l_2,\ldots,l_k}$ is a DEDV-tree if and only if it is weak balanced.
\end{theorem}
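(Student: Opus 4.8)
The plan is to prove both implications. Throughout I sort $l_1\le l_2\le\cdots\le l_k$ and write $M=l_k$; "weak balanced" means precisely $l_1+l_2\ge M$ (the smallest pairwise sum dominates the largest part). The first thing I would record is the numerical consequence of weak balance: $2M\le l_k+(l_1+l_2)\le l_k+\sum_{q<k}l_q=n-1$, so $M<n/2$. Hence in the weak balanced case no edge count is ``folded'' by the cap $\mu(e)\le\lfloor n/2\rfloor$, and $\mathbf r(T)$ is exactly the conjugate partition of $(l_1,\dots,l_k)$, namely $r_m=|\{j:l_j\ge m\}|$: a non-increasing sequence supported on $1\le m\le M$ with $r_M=|\{j:l_j=M\}|\ge1$. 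I would also note the refinement $l_{k-1}\ge M-l_1$ (from $l_1+l_{k-1}\ge l_1+l_2\ge M$), so that $r_m\ge2$ for every $m\le M-l_1$.

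For the ``only if'' direction I argue the contrapositive by an explicit branch-exchange. Assume $T$ is not weak balanced, i.e.\ $l_1+l_2<M$. Writing the longest branch as $uw_1\cdots w_M$, put $v=w_{\,M-l_1-l_2}$, which exists and satisfies $v\ne u$ since $M-l_1-l_2\ge1$. Take $S(u)=\{\text{roots of }P_{l_1},P_{l_2}\}\subseteq N(u)\setminus\{w_1\}$ and $S(v)=\{w_{\,M-l_1-l_2+1}\}\subseteq N(v)\setminus\{w_{\,M-l_1-l_2-1}\}$; these are balanced, because $T_{S(u)}$ has $l_1+l_2$ vertices and $T_{S(v)}$ is the pendant path of the same length. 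By Lemma \ref{lem-2-3} the resulting branch-exchange $T'$ satisfies $T\approx T'$. In $T'$ the vertex $u$ has degree $(k-2)+1=k-1\ge3$ and $v$ has degree $2-1+2=3$, so $T'$ has two branching vertices whereas the starlike tree $T$ has only one; since the number of branching vertices is an isomorphism invariant, $T\not\cong T'$, and $T$ is not DEDV.

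For the ``if'' direction, take any $T'$ with $\mathbf r(T')=\mathbf r(T)$; by Claim \ref{r-claim-1}(a) it has $k$ leaves, and I must show $T'\cong T$. The main device is a centroid decomposition. Let $c$ be a centroid of $T'$ and $C_1,\dots,C_d$ the components of $T'-c$, with $|C_i|=n_i$. The edge joining $c$ to $C_i$ has $\mu=\min(n_i,n-n_i)=n_i$, and since $r_m(T')=0$ for $m>M$ we get $n_i\le M<n/2$ for every $i$. As every component is strictly smaller than $n/2$, no pendant subtree of $T'$ can contain $c$, so each pendant subtree lies inside a single $C_i$ and $\mathbf r(T')=\sum_{i=1}^{d}\mathbf g(C_i)$, where $\mathbf g(C_i)$ is the subtree-size profile of the rooted component $C_i$ (its $m$-th entry counts the vertices whose descendant-subtree has exactly $m$ vertices). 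A path of size $n_i$ has profile $(1,\dots,1)$, and $\mathbf r(T)$ is the sum of the $k$ path-profiles of the branches of $T$. Comparing tail sums and using that, among rooted trees of fixed size, a path maximises the number of vertices with descendant-subtree of size $\ge m$, I obtain for every $m$ the inequality $\sum_j(l_j-m+1)^{+}=\sum_{p\ge m}r_p\le\sum_i(n_i-m+1)^{+}$, i.e.\ the component-size partition $(n_i)$ dominates $(l_j)$; moreover equality for all $m$ is equivalent to every $C_i$ being a path, in which case $(n_i)=(l_j)$ and $T'\cong T$.

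The heart of the proof, and the step I expect to be the main obstacle, is therefore to upgrade this dominance to equality, i.e.\ to exclude ``bushy'' (non-path) centroid components; dominance and convexity alone cannot do this, since balanced partitions are dominated by spread-out ones, so the exclusion must use the full coordinate-wise identity together with the leaf budget $r_1=k$. Concretely I would argue top-down: the identity in coordinate $M$ forces exactly $|\{j:l_j=M\}|$ components of size $M$, and a non-path component of size $M$ would simultaneously create a deficit in some intermediate coordinate $r_m$ and consume more than its share of the $k$ available leaves; weak balance, which keeps $r_m\ge2$ for all $m\le M-l_1$ and bounds the overhang $l_k-l_{k-1}\le l_1$ of the longest branch, is exactly what prevents such a deficit from being repaid by the remaining small components (a clean instance is that a bushy size-$M$ component, forced to carry two legs, together with the centroid edges produces one size-$M$ pendant subtree too many, overflowing $r_M$). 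I would formalise this as a downward induction on the coordinate, at each level matching $r_m-r_{m+1}$ — the number of length-$m$ branches of $T$ — against the components and forcing the next component to be a path. Once all components are shown to be paths, the profile identity yields $(n_i)=(l_j)$, hence $T'\cong T_{l_1,\dots,l_k}$, completing the proof.
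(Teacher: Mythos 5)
Your necessity argument is correct and is essentially the paper's own: when $l_1+l_2<l_k$, the branch-exchange of $P_{l_1}\cup P_{l_2}$ with the terminal $(l_1+l_2)$-vertex segment of the longest branch gives, by Lemma \ref{lem-2-3}, a tree $T'\approx T$ with two branching vertices (of degrees $k-1\ge 3$ and $3$), while $T$ has only one, so $T\not\cong T'$ and $T$ is not a DEDV-tree.

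The sufficiency direction, however, has a genuine gap, and it sits exactly where the whole difficulty of the theorem lies. Your centroid framework is sound as far as it goes: every component $C_i$ of $T'-c$ has $n_i\le M<n/2$, every pendent subtree lies inside a single $C_i$, $\mathbf{r}(T')=\sum_i\mathbf{g}(C_i)$, path profiles maximize tail sums, and if all $C_i$ were rooted paths then indeed $(n_i)=(l_j)$ and $T'\cong T$. But the passage from the coordinate-wise identity $\sum_i\mathbf{g}(C_i)=\mathbf{r}(T)$ to ``no $C_i$ is bushy'' is only announced (``I would argue top-down\dots I would formalise this as a downward induction''), not carried out, and the one concrete mechanism you offer is wrong: a non-path component of size $M$ contributes exactly one vertex whose pendent subtree has $M$ vertices, namely its root, just as a path component of size $M$ does, so nothing ``overflows $r_M$''. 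What a bushy component actually creates is a surplus of leaves and of small pendent subtrees together with deficits at intermediate coordinates, and the entire problem is to show that, under weak balance, these cannot be compensated by the remaining components. Moreover, the two consequences of weak balance you record ($r_m\ge 2$ for $m\le M-l_1$ and $l_k-l_{k-1}\le l_1$) are never used in any deduction; since the statement is false without weak balance, an argument that has not yet invoked it cannot be complete. For comparison, the paper closes precisely this step by a different, more direct route: if $T'$ had two branching vertices $u_1,u_2$, then each side of any edge $e$ on the path joining them contains a branching vertex together with two maximal pendent paths, which the paper bounds below in total by $l_1+l_2+1>l_k$ (this is where $l_1+l_2\ge l_k$ enters), so $\mu_{T'}(e)>l_k$, contradicting $r_i=0$ for $i>l_k$; hence $T'$ is starlike, and Lemma \ref{lem-3-2} finishes the proof. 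Translated into your language, that argument is exactly the missing exclusion of bushy components; until you supply a step of this kind, your ``if'' direction is a framework plus an unproven (and, as sketched, partly erroneous) key claim.
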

\begin{proof}
Let  $T=T_{l_1,l_2,\ldots,l_k}$ be a DEDV-tree described as in (\ref{r-eq-1}), and without loss of generality assume that $1\le l_1\le l_2\le\cdots\le l_k$.
If $T$ is not weak balanced, then $l_1+l_2< l_k$. There exists one edge $e=wz\in E(P_{l_k})$ such that $\mu(e)=|T_z|=l_1+l_2=|P_{l_1}|+|P_{l_2}|$.  Then we can get  $T'$  by branch-exchange of $T$ with $P_{l_1}\cup P_{l_2}$ and $T_z$. By Theorem \ref{thm-3-2-1}, we have $T'\approx T$ but $T'\not\cong T$ since $T'$ has two branching vertices, a contradiction.

Conversely, suppose that starlike tree  $T=T_{l_1,l_2,\ldots,l_k}$ is weak balanced, i.e., $l_1+l_2\geq l_k$ and $l_k< \lfloor\frac{n}{2}\rfloor$. According to definition, we have
$$\mathbf{r}=\mathbf{r}(T)=(l_1*k, (l_2-l_1)*(k-1),(l_3-l_2)*(k-2),\ldots,(l_k-l_{k-1})*1,0,\ldots,0)
$$
where $r_{l_k+1}(T)=\cdots=r_{\lfloor\frac{n}{2}\rfloor}(T)=0$.
Suppose that $T'$ is the tree with edge division vector $\mathbf{r}(T')=\mathbf{r}(T)$. Since $r_1=r_2=\cdots=r_{l_1}=k$, $T'$ contains exactly $k$ pendent paths $P_{l_1}$ by Lemma \ref{r-lem-1}, particularly $T'$ has $k$ leaves. First we will show that $T'$ is also a starlike tree. Since otherwise $T'$ has two branching vertices $u_1$ and $u_2$. Let $P'=u_1 u_1'\cdots u_2'u_2$ be the path in $T'$ connecting $u_1$ and $u_2$. Notice that $d_{T'}(u_1),d_{T'}(u_2) \ge 3$, we see that $T_{u_1}'(u_1u_1')$ contains at least two pendent paths $P_{l_1}$ and $P_{l_2}$, thus $|T_{u_1}'(u_1u_1')|\ge |P_{l_1}|+|P_{l_2}|+1=l_1+l_2+1>l_k$. Similarly, $|T_{u_2}'(u_2u_2')|\ge |P_{l_1}|+|P_{l_2}|+1=l_1+l_2+1>l_k$. Therefore,  $\mu_{T'}(e)\ge \min\{|T_{u_1}'(u_1u_1')|, |T_{u_2}'(u_2u_2')|\}>l_k$ for any edge $e\in E(P')$. It implies that $r_{i}\not=0$ for some $i>l_k$, which contradicts the assumption of $\mathbf{r}$. Thus $T'$ is a starlike tree such that $T'\in \mathcal{T}_{n,k}$ and  $T'\approx T$. It immediately follows $T'\cong T =T_{l_1,l_2,\ldots,l_k}$ by Lemma \ref{lem-3-2}.
\end{proof}

A \emph{double star} $S_{p,q}$ is the tree obtained from $K_2$ by attaching $p-1$ pendent vertices to one vertex and $q-1$ pendent vertices to the other vertex, where $p+q=n$.

\begin{lemma}\label{r-lem-2}
Let $\mathbf{r}(T)=(r_1,r_2,\ldots,r_{\lfloor\frac{n}{2}\rfloor})$ be the edge division vector of $T\in \mathcal{T}_n$. Suppose that $r_i=0$ if $i\not=1,p$, where $p<\lfloor\frac{n}{2}\rfloor$. We have\\
(a)  If $r_{p}=1$  then $T=S_{p,n-p}$.\\
(b)  If $r_{p}=t$ then $T$ has $t$ pendent $S_{p}$, each centre  of them  joins  a unique centre vertex of $T$, such a $T$ is called power star and denote by $S_{p}^t$.
\end{lemma}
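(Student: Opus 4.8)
The plan is to read off the shape of $T$ from two facts: every edge satisfies $\mu\in\{1,p\}$, and, by Claim~\ref{r-claim-1}(d), $r_1=(n-1)-t$. The second fact fixes the number of leaves, and hence fixes the number of non-leaf (internal) vertices at $n-r_1=t+1$. For part~(a), where $t=1$, this almost finishes the job: $T$ then has $n-2$ leaves and exactly two internal vertices, and since the internal vertices of a tree always induce a subtree, these two are adjacent; thus $T$ is a double star. Its unique non-pendent edge is the bridge between the two internal vertices, so it is the single edge counted by $r_p$ and therefore has $\mu=p$. As $p<\lfloor n/2\rfloor$, one side of this bridge has exactly $p$ vertices, which pins down $T=S_{p,n-p}$.

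For part~(b) the first step is to analyse each of the $t$ edges counted by $r_p$. Such an edge $e$ cuts off a pendent subtree $A$ of order $p$, and I would show $A$ must be a star $S_p$. Indeed, any edge $f$ lying strictly inside $A$ separates off a subtree of $A$ of some order $s$ with $1\le s\le p-1$; since $s\le p-1<\lfloor n/2\rfloor$ the side of size $s$ is the smaller one, so $\mu_T(f)=s$. But $r_i=0$ for every $i$ with $2\le i\le p-1$, so the only possibility is $s=1$. Hence every non-root vertex of $A$ is a leaf of $T$, i.e.\ $A$ is the star $S_p$ with its root as centre. Because there are exactly $r_p=t$ edges with $\mu=p$, this produces exactly $t$ pendent copies of $S_p$.

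The second step is to attach all $t$ stars to one vertex. Writing $e_i=u_iw_i$ for the $i$-th such edge, with $u_i$ the centre of its star $A_i$, the roots $u_1,\dots,u_t$ are pairwise distinct: at most one edge incident to a given vertex can place that vertex on a pendent (size $\le\lfloor n/2\rfloor$) side, as observed before Claim~\ref{r-claim-1}, so two different $\mu=p$ edges cannot share a root. Each $u_i$ has degree $p$, with $p-1$ leaf-neighbours and one further neighbour $w_i$ that must be internal (were $w_i$ a leaf, $e_i$ would have $\mu=1$, not $p$). Consequently, inside the subtree $W$ spanned by the $t+1$ internal vertices, every $u_i$ is a leaf. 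For $t\ge 2$ a tree on $t+1$ vertices carrying the $t$ distinct leaves $u_1,\dots,u_t$ must be the star $S_{t+1}$, whose centre $c$ is then adjacent to every $u_i$, i.e.\ $w_i=c$ for all $i$ (the case $t=1$ being exactly part~(a)). Thus the $t$ pendent stars $S_p$ are glued through their centres to the single vertex $c$, which is precisely the power star $S_p^t$.

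I expect the main obstacle to be the structural bookkeeping of the last two steps rather than any delicate inequality. The two crucial reductions are that a pendent subtree of order $p$ can only be a star (because all intermediate orders $2,\dots,p-1$ are forbidden by the hypothesis) and that the internal skeleton $W$ is itself a star; the leaf-count $|V(W)|=t+1$ makes the latter transparent once each $u_i$ is shown to be a leaf of $W$. One point to state carefully is that the common centre $c$ may in addition carry some of the remaining pendent vertices of $T$, so the identification $T=S_p^t$ should be understood as this common-centre structure; when $t=1$ this recovers the double star $S_{p,n-p}$ of part~(a), which keeps the two parts consistent.
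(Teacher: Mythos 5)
Your proof is correct, and its key reduction coincides with the paper's: since $r_i=0$ for $2\le i\le p-1$, any pendent subtree of order $p$ must be a star centred at its root (the paper argues this by exhibiting, inside a non-star $T_u$, a path $uu'\cdots x'$ with $1<\mu_T(uu')\le p-1$, contradicting the hypothesis). The difference lies in what comes afterwards. For part (b) the paper stops at that point and simply \emph{asserts} that ``each centre of them joins a unique centre vertex of $T$''; you actually prove this assertion. Your bookkeeping --- $r_1=(n-1)-t$ leaves by Claim~\ref{r-claim-1}(d), hence exactly $t+1$ internal vertices; the roots $u_1,\dots,u_t$ pairwise distinct because two edges with $\mu=p$ cannot share a root (the observation preceding Claim~\ref{r-claim-1}); each $u_i$ a leaf of the internal skeleton $W$; and a tree on $t+1$ vertices with $t$ prescribed leaves being a star when $t\ge2$ --- is exactly the step the paper omits, so your write-up is more complete than the published proof. (A one-line alternative closure: if two star centres attached to distinct vertices $w_1\ne w_2$, every edge on the $w_1$--$w_2$ path would have both sides of order at least $p+1$, forcing $r_i\ne0$ for some $i$ with $p<i\le\lfloor\frac{n}{2}\rfloor$.) Your final caveat is also a genuine catch: the hypothesis does not force $n=tp+1$, so the common centre may carry additional pendent leaves; the paper's identification $T=S_p^t$ glosses over this, though for fixed $n$ the tree is still unique, which is all the subsequent DEDV corollary needs. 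Your part (a) (two internal vertices, necessarily adjacent, with the unique non-pendent edge carrying $\mu=p$) is equivalent to the paper's count that $T$ has only one non-pendent edge.
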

\begin{proof}

(a) Since  $r_p=1$ and $r_i=0$ for $i\not=1,p$,  $T$ has only one non-pendent edge, say $e$, and thus $r_1=n-2$ by Claim \ref{r-claim-1}. Moreover, we have $\mu(e)=p$ due to  $r_{p}=1$. It follows that  $T=S_{p,n-p}$.

(b) Notice  $r_{p}=t\geq 2$, we may assume $T_u$ is  a pendent subtree of order $p$ with root $u$. If $T_u$ is not a star with centre $u$, then $d_{T_u}(u)<p-1$ and thus $T_u$ contains a path $P=uu'\cdots x'$, where $x'$ is a leaf of $T$. Then $1<\mu_{T}(uu')\leq p-1$, which contradicts $r_2=\cdots=r_{p-1}=0$. Therefore, $T$ contains $t$ copies of pendent star $S_p$ and each centre of them joins a unique centre vertex of $T$. Hence $T=S_{p}^t$.
\end{proof}

From Lemma \ref{r-lem-2} we see that  double star $S_{p,q}$ and power star $S_{p}^t$ are uniquely determined by their edge division vectors. Thus we have

\begin{corollary}
The double star $S_{p,q}$ and power star $S_{p}^t$ are DEDV-trees.
\end{corollary}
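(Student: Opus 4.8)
The final statement to prove is the corollary asserting that the double star $S_{p,q}$ and the power star $S_p^t$ are DEDV-trees. The plan is to read off the structure directly from Lemma \ref{r-lem-2}, which already does essentially all the work: it classifies every tree whose edge division vector is supported on the two indices $1$ and $p$ (with $p<\lfloor\frac{n}{2}\rfloor$). I would invoke the definition of a DEDV-tree, namely that $T$ is DEDV if $T'\cong T$ for every $T'\in\mathcal{T}_n$ with $T'\approx T$, and then show that the hypotheses of Lemma \ref{r-lem-2} are forced by the assumption $T'\approx T$.

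First I would treat the double star. Given $T=S_{p,q}$ with $q=n-p$, I would compute its edge division vector explicitly: it has $n-2$ pendent edges and a single central edge $e=uv$ with $\mu(e)=\min\{p,q\}=p$ (assuming $p\le q$), so $\mathbf{r}(S_{p,q})=(r_1,\ldots,r_{\lfloor n/2\rfloor})$ with $r_1=n-2$, $r_p=1$, and all other $r_i=0$. Now suppose $T'\approx S_{p,q}$, i.e.\ $\mathbf{r}(T')=\mathbf{r}(S_{p,q})$. Then $T'$ satisfies exactly the hypotheses of Lemma \ref{r-lem-2}(a): $r_i=0$ for $i\ne 1,p$ and $r_p=1$. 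Hence Lemma \ref{r-lem-2}(a) gives $T'=S_{p,n-p}\cong T$, so $T$ is DEDV.

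Second I would treat the power star $S_p^t$ in the same fashion. From its description in Lemma \ref{r-lem-2}(b)---$t$ pendent stars $S_p$ whose centres are each joined to a common central vertex---I would note that its edge division vector again has support only on indices $1$ and $p$, with $r_p=t$ (the $t$ central edges each cut off a pendent star of order $p$) and $r_1=n-1-t$ counting the pendent edges. Any $T'\approx S_p^t$ then satisfies $r_i=0$ for $i\ne 1,p$ and $r_p=t$, which are precisely the hypotheses of Lemma \ref{r-lem-2}(b), so $T'=S_p^t\cong T$ and $T$ is DEDV.

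I do not expect any genuine obstacle here: the corollary is an immediate consequence of the classification already established in Lemma \ref{r-lem-2}, and the only thing requiring care is the bookkeeping that confirms the edge division vectors of $S_{p,q}$ and $S_p^t$ indeed vanish outside indices $1$ and $p$ and carry the stated values at those indices---together with checking the side condition $p<\lfloor\frac{n}{2}\rfloor$ so that Lemma \ref{r-lem-2} applies. The mild subtlety worth a sentence is that $\approx$ is an equivalence of edge division vectors, so verifying $T'\approx T$ forces $\mathbf{r}(T')$ to match the computed vector coordinate-by-coordinate, after which the uniqueness in Lemma \ref{r-lem-2} closes the argument.
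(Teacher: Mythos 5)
Your proof is correct and follows exactly the paper's route: the paper derives this corollary in one line as an immediate consequence of Lemma \ref{r-lem-2}, and your argument simply spells out the bookkeeping (computing $\mathbf{r}(S_{p,q})$ and $\mathbf{r}(S_p^t)$, noting they are supported on indices $1$ and $p$, and applying the uniqueness in parts (a) and (b) to any $T'\approx T$). Your explicit attention to the side condition $p<\lfloor\frac{n}{2}\rfloor$ is a sensible addition, since both your argument and the paper's inherit that hypothesis from the lemma.
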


Let $T=P_{s_1}\bullet_uP_{s_2}+P_{uv}+P_{t_1}\bullet_vP_{t_2}$ be a tree on $n$ vertices with exactly two branching vertices $u$ and $v$ connecting by a path $P_{uv}=uu_1\cdots u_{k-1}v$ such that $T_u=T_u(uu_1)$ and $T_v=T_v(u_{k-1}v)$ are two starlike trees $T_u=T_{s_1,s_2}$ and $T_v=T_{t_1,t_2}$, respectively, where $s_1+s_2\leq t_1+t_2$, $s_1\le s_2$ and $t_1\le t_2$.

\begin{proposition}\label{pro-4-4}
Let $T=P_{s_1}\bullet_uP_{s_2}+P_{uv}+P_{t_1}\bullet_vP_{t_2}$, where $P_{uv}$ is a path of length $k$, $s_1+s_2\leq t_1+t_2$, $s_1\le s_2$ and $t_1\le t_2$. Then $T$ is a DEDV-tree if and only if the one of the following four conditions holds\\
(i) $s_1+s_2=t_1+t_2$,\\
(ii) $s_1+s_2>t_2$,\\
(iii) $s_1+s_2+k=t_2$ and $s_1+s_2> t_1$,\\
(iv) $s_1+s_2+k=t_2=t_1$.
\end{proposition}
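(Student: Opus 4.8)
The plan is to exploit that $T$ has exactly four leaves, so that by Claim~\ref{r-claim-1}(a) every $T'$ with $T'\approx T$ also has four leaves and is therefore either a starlike tree $T_{l_1,l_2,l_3,l_4}$ (one vertex of degree $4$) or, like $T$, a ``double spider'' with two degree-$3$ branching vertices joined by a path. Writing $A=s_1+s_2$ and $B=t_1+t_2$ (so $A\le B$), I would first compute $\mathbf{r}(T)$: each leg $P_\ell$ adds $1$ to each of $r_1,\dots,r_\ell$, while the $j$-th edge of $P_{uv}$ has $\mu=\min\{A+j,\;B+k+1-j\}$ for $j=1,\dots,k$, the smallest of these path values being $A+1$. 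Because $s_1\ge1$ gives $s_2<A$ and, under each of (i)--(iv), no leg has length exactly $A$, the number of legs of length $\ge m$ agrees for $m=A$ and $m=A+1$, whereas at least one path edge has $\mu=A+1$; hence $r_A<r_{A+1}$. A starlike tree $T_{l_1,l_2,l_3,l_4}$ has $r_m=|\{i:l_i\ge m\}|$, which is non-increasing in $m$, so this strict ascent already rules out the starlike competitor and, moreover, shows that $A$ equals the position of the first strict ascent of $\mathbf{r}(T)$ minus $1$.

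For the ``only if'' direction I would prove the contrapositive: if none of (i)--(iv) holds then $A<B$, $A\le t_2$, and a short case check shows that exactly one of (I) $A=t_2$; (II) $A<t_2$ with $t_2\ne A+k$; (III) $t_2=A+k$ with $A\le t_1<t_2$ must occur. In each I exhibit a balanced branch-exchange (Definition~\ref{def-3-2}), so $T\approx T'$ by Lemma~\ref{lem-2-3}, and then check $T'\not\cong T$ directly by comparing the unordered pair of leg-pairs together with the length of the connecting path. In (I) I exchange the two $u$-legs with the whole $t_2$-leg; then $v$ becomes a degree-$4$ vertex, $T'$ is starlike, and $T'\not\cong T$. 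In (II) I slide the two $u$-legs onto the $t_2$-leg at distance $A$ from its leaf; the result is again a double spider, now with leg-pair $\{t_1,k+A\}$ at $v$ and connecting path of length $t_2-A$, which is isomorphic to $T$ only when $t_2=A+k$. In (III) I slide the two $u$-legs onto the $t_1$-leg (or, if $t_1=A$, exchange them with the whole $t_1$-leg, again making $v$ a degree-$4$ vertex); this produces leg-pair $\{t_2,k+A\}=\{A+k,A+k\}$ at $v$ and connecting path $t_1-A$, not isomorphic to $T$ because $t_1<t_2=A+k$. Thus $T$ is not a DEDV-tree.

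For the ``if'' direction I assume one of (i)--(iv) and take any double spider $T'$ with $\mathbf{r}(T')=\mathbf{r}(T)$, aiming to recover its parameters from the vector and match them to $T$, so that $T'\cong T$. The recovery I have in mind: the first strict ascent returns $A$; the entries $r_m$ with $m\le A$ equal $|\{\text{legs}\ge m\}|$ and hence give all legs of length $<A$ together with the count $r_A$ of longer legs; the top of the vector ($m>A$) returns the lengths of the long legs and accounts for the $k$ path edges; and $A+B+k=n-1$ then fixes $k$ and $B$. With the full leg multiset and $A=s_1+s_2$ in hand, the split of the four legs into a $u$-pair and a $v$-pair is determined up to interchanging the two ends, which only produces an isomorphic tree; so $T'\cong T$. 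Conditions (i)--(iv) are precisely what make this reading unambiguous: under (i),(ii) every leg is shorter than every path value, so $\mathbf{r}$ has a zero at $m=A$ separating a leg-``staircase'' from a unimodal path-``bump''; under (iii),(iv) the equality $t_2=A+k$ fuses the longest leg(s) with the largest path value, so above $m=A$ the vector is a flat block of height $2$ (resp.\ $3$) whose last nonzero entry is at $m=A+k$, fixing $t_2$ and $k$ simultaneously.

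I expect the reconstruction step to be the crux. A competitor $T'$ could in principle carry a strictly smaller $A'=s_1'+s_2'$ hidden by a leg of length exactly $A'$ (so that its own first ascent is cancelled), or could try to trade path length against leg length; ruling out that the overlapping leg and path contributions reassemble into a non-isomorphic double spider is what forces the four-case split. The verification amounts to tracking the partial sums $\sum_{i\ge m}r_i$ in each case, and this bookkeeping---rather than a single clever idea---is where conditions (i)--(iv) are actually used.
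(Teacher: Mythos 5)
Your necessity argument is sound and is essentially the paper's own: your cases (I), (II), (III) coincide with the paper's Case 1 and the subcases of its Case 2, with the same balanced branch-exchanges and the same non-isomorphism checks. The problems are in the sufficiency direction, and the first is a false key claim. For a starlike tree $T_{l_1,l_2,l_3,l_4}$, the identity $r_m=|\{i: l_i\ge m\}|$ (and hence the monotonicity of $\mathbf{r}$) holds only when every leg satisfies $l_i\le\lfloor n/2\rfloor$; if $l_4>\lfloor n/2\rfloor$, the edges of the long leg near the center have $\mu(e)=n-j<j$ and the vector can ascend. Concretely, $T_{1,1,1,10}$ (so $n=14$) has $\mathbf{r}=(4,1,1,2,2,2,1)$, which rises from $r_3=1$ to $r_4=2$. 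So your strict ascent $r_A<r_{A+1}$ does not by itself exclude a starlike competitor. What does work, and what the paper uses, is the zero/nonzero pattern: in any starlike tree one has $r'_m\neq 0$ for every $m\le\min\{l_4,\lfloor n/2\rfloor\}$, while under (i)--(ii) the vector $\mathbf{r}(T)$ has $r_A=0$ and $r_{A+1}\neq 0$ with $A<\lfloor n/2\rfloor$, and under (iii)--(iv) it vanishes on the nonempty range of coordinates beyond $A+k$. Your step ruling out the starlike shape must be replaced by an argument of this kind.

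The second and more serious gap is that the heart of the ``if'' direction --- that a double spider $T'$ with $\mathbf{r}(T')=\mathbf{r}(T)$ is isomorphic to $T$ --- is never actually proved. You describe a reconstruction keyed to the first strict ascent, and then you yourself observe that it can fail: a competitor may hide a smaller $A'=s_1'+s_2'$ behind a leg of length exactly $A'$ (your ``cancelled ascent''), or may trade path length against leg length, which is precisely the danger in cases (iii)--(iv), where the $t_2$-leg and the path edges contribute to the same coordinates. You defer the resolution to ``tracking the partial sums\dots bookkeeping,'' but that bookkeeping \emph{is} the proof, and the proposal gives no indication of how it goes. The paper does real work here: in case (ii) it pins down the longest maximal pendent path, deletes it from both $T$ and $T'$, and invokes the three-leaf result (Proposition \ref{pro-s-3}) to equate the remaining leg multisets; in cases (iii)--(iv) it first analyzes the pendent subtrees of order $t_2$ (they cannot both be paths, their roots must attach to the same branching vertex, etc.) before performing the same deletion-reduction; case (i) needs its own symmetric argument. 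As it stands, your proposal correctly computes $\mathbf{r}(T)$ and establishes necessity, but sufficiency rests on a false lemma plus an unexecuted plan, so the proposition is not yet proved.
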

\begin{proof}
We prove necessity by proving its inverse proposition. Suppose that $s_1+s_2\not=t_1+t_2$, we will show (ii), (iii) or (iv) holds. By  the way of contradiction, we may assume that
$s_1+s_2\le t_2$ and consider the following two situations.
{\flushleft\bf Case 1. } $s_1+s_2= t_2$;

If $s_1+s_2= t_2$, we can construct a tree $T'\approx T $ by branch-exchange the two balanced components $T_u-u=P_{s_1}\cup P_{s_2}$ and $P_{t_2}$. It is clear that   $T'=T_{t_1,s_1,s_2, t_2+k}$  is a starlike tree and so $T'\not\cong T$.  This means that  counterexamples can be found no matter how we choose the conditions under which (iii) and (iv) do not hold.
{\flushleft\bf Case 2. } $s_1+s_2< t_2$;

If $s_1+s_2+k\not=t_2$, we can get a tree $T'\approx T $ by branch-exchange the two balanced components $T_u-u=P_{s_1}\cup P_{s_2}$ and $P_{s_1+s_2}\subseteq P_{t_2}$ of $T$. It is clear that $T'$ is a tree with two branching vertices $u'$ and $v'$ connecting with a path $P_{u'v'}$ of $t_2-(s_1+s_2)+1$ vertices such that $T'_{u'}=T_{s_1,s_2}$ and $T'_{v'}=T_{t_1,s_1+s_2+k}$. Obviously, $T'\not\cong T$ because of $s_1+s_2+k\not=t_2$, a contradiction.

If $s_1+s_2+k=t_2= t_1+k$, it also means $t_2\neq t_1$, we can get a tree $T'\approx T $ by branch-exchange the two balanced components $T_u-u=P_{s_1}\cup P_{s_2}$ and $P_{t_1}$ of $T$. It is clear that $T'=T_{t_2,s_1,s_2, t_1+k}$ is a starlike tree and so $T'\not\cong T$, a contradiction.

If $s_1+s_2+k=t_2<t_1+k$ and $t_2\neq t_1$, we can get a tree $T'\approx T $ by branch-exchange the two balanced components $T_u-u=P_{s_1}\cup P_{s_2}$ and $P_{s_1+s_2}\subseteq P_{t_1}$ of $T$. It is clear that $T'$ is a tree with two branching vertices $u'$ and $v'$ connecting with a path $P_{u'v'}$ of $t_1-(s_1+s_2)+1$ vertices such that $T'_{u'}=T_{s_1,s_2}$ and $T'_{v'}=T_{s_1+s_2+k,t_2}$. Obviously, $T'\not\cong T$ because of $s_1+s_2+k\not=t_1$, a contradiction.

For the sufficiency, we may assume that $\alpha=s_1+s_2\le t_1+t_2=\beta$,  there exists a tree $T'\approx T $ with $(r_1,\ldots,r_{\lfloor\frac{n}{2}\rfloor})=\mathbf{r}(T)=\mathbf{r}(T')=(r_1',\ldots,r_{\lfloor\frac{n}{2}\rfloor}')$. Since $T $ has four leaves, we may assume  $P_{a_i}$ is the maximal pendent path of  $T'$ with $a_i$ vertices, where $a_1\le a_2\le a_3\le a_4$.

Suppose (i) holds, i.e., $\alpha=\beta$. Without loss of generality, assume that
$s_1\le t_1\le t_2\le s_2$.  It is easy to see that $r'_{s_1+s_2+1}=r_{s_1+s_2+1}=1$ if $k=1$ and $2$ otherwise, but in any case $T'$ has two pendent subtrees $T_{u'}'$ and $T_{v'}'$ such that $a_4<|T_{u'}'|=|T_{v'}'|=s_1+s_2+1\le\frac{n}{2}$. It implies that $|T_{u'}'|=a_{1}+a_{4}+1$ and $|T_{v'}'|=a_{2}+a_{3}+1$  due to $\alpha=\beta$. Let  $P_{u'v'}'$ be the path connecting $u'$ and $v'$, we have $T'=T_{u'}'+P_{u'v'}'+T_{v'}'$ and so $|P_{u'v'}'|=k$ by Claim \ref{r-claim-1} (d).
Moreover, we have $a_1=s_1$ by Claim \ref{r-claim-1} (c), and thus $a_4=s_2$.
Since $t_1+1<s_1+s_2$, we have $r_{s_1+1}=\cdots=r_{t_1}=3$ and $r_{t_1+1}=2$. Similarly, $r_{a_1+1}'=\cdots=r_{a_2}'=3$ and $r_{a_2+1}'=2$. Thus one can verify that  $t_1\ge a_2$. By symmetry, we get  $t_1= a_2$ and so $t_2=a_3$. Therefore,  $T'\cong T $.

In what follows we always assume that $\alpha<\beta$ because of (i).

Suppose (ii) holds, i.e., $t_2< s_1+s_2<t_1+t_2$.
First of all,  we claim that  $T'$ has exactly two branching vertices. Otherwise $T'=T_{a_1,a_2,a_3,a_4}$.  Since $r_{\alpha+1}\not=0$, we have $a_4\ge \alpha+1$ and so   $r_{i}=r_{i}'\not=0$ for $i=1,\ldots,a_4$. On the other hand, we have $r_i=0$ for $i=\max\{t_2,s_2\}+1,\ldots,\alpha$, and $\alpha< a_4$, a contradiction. Thus, we may assume that $T'$ has two branching vertices $u'$ and $v'$ connecting with a path  $P_{u'v'}'$. Then $T'=T_{u'}'+P_{u'v'}'+T_{v'}'$ with $a_{i_1}+a_{i_2}+1=|T_{u'}'|< |T_{v'}'|=a_{i_3}+a_{i_4}+1$, where $\{i_1,i_2,i_3,i_4\}=\{1,2,3,4\}$.  Clearly,  $t=\max\{s_2,t_2\}\ge a_4$ since $r_t\not=0$ and  $r_{t+1}=0$. If  $t >a_4$ then $r_{a_4+1}'=r_{a_4+1}\not=0$. Therefore, $|T_{u'}'|=a_{i_1}+a_{i_2}+1=a_4+1\le t<s_1+s_2<\frac{n}{2}$. This is impossible since $r_i'\not=0$ for $i\in [a_4+1,s_1+s_2]$ but $r_{t+1}=0$.  Thus $t=a_4$. Now by deleting $P_{t}$ from $T$ and $T'$, respectively, we get $\tilde{r_i}=r_i-1=r_i'-1=\tilde{r'_i}$ for $i=1,2,\ldots,c=\min\{s_2,t_2\}$.  It is easy to see that $\mathbf{r}(T_3)=(\tilde{r_1},\ldots,\tilde{r_c},0,\ldots,0)$ and $\mathbf{r}(T_3')=(\tilde{r'_1},\ldots,\tilde{r'_c},0,\ldots,0)$  are the edge division vectors of $T_3=T_{c_1,c_2,c_3}$ and $T_3'=T_{a_1,a_2,a_3}$, respectively, where  $\{c_1,c_2,c_3\}=\{s_1,s_2,t_1,t_2\}\setminus t$. By Proposition \ref{pro-s-3},  $\{c_1,c_2,c_3\}=\{a_1,a_2,a_3\}$  and so $\{s_1,s_2,t_1,t_2\}=\{a_1,a_2,a_3,a_4\}$. It implies that $T_{u'}'=P_{s_{1}}\bullet_{u'} P_{s_{2}}$  and $T_{v'}'=P_{t_{1}}\bullet_{v'} P_{t_{2}}$. It follows that $T\cong T'$.

Suppose (iii) holds, i.e., $s_1+s_2+k=t_2$ and $s_1+s_2> t_1$. Since $s_1+s_2+k=t_2<\lfloor\frac{n}{2}\rfloor$, we have $r_{t_2}'=r_{t_2}=2$ and $r_{t_2+1}'=r_{t_2+1}=0$. Then $T'$ has two pendent subtrees $T_{x_1'}'$ and $T_{x_2'}'$ of order $t_2$ and no any  of order $t_2+1$. Thus the root $x_i'$  of $T_{x_i'}'$  appends with  branching vertex $x_i$, i.e., $T_{x_i}'=T_{x_i'}'+x_i'x_i$ where $i=1,2$. First of all, both of $T_{x_1'}'$ and $T_{x_2'}'$ can not be  path since otherwise $r_{s_1+s_2}'\ge 2$ but $r_{s_1+s_2}=1$.  Moreover, $x_1=x_2=v'$ and one of $T_{x_1'}'$ and $T_{x_2'}'$ must be a path since otherwise $T'$ has at least five leaves. Thus  we may assume that $T_{x_2'}'=P_{t_2}$.  By deleting the $P_{t_2}$ from $T$ and $T'$, similar as the arguments in the proof of (ii) we get $\{a_1,a_2,a_3,a_4\}=\{s_1,s_2,t_1,t_2\}$. Let $P_{a_3}$ be the path, its root appends at $v'$. Then $a_3=n-2t_2-1=t_1$. It follows that $T'=T_{u'}'+P_{u'v'}'+T_{v'}'$, where $T_{u'}'=P_{s_1}\bullet_{u'} P_{s_2}$ and $T_{v'}'=P_{t_1}\bullet_{v'} P_{t_2}$, and thus $T\cong T'$.

Suppose (iv) holds. Let $s=s_1+s_2+k=t_2=t_1$, we have $r_s'=r_s=3$ and $r_i'=r_i=0$ for $i>s$. Then $T'$ has three pendent subtrees $T_{x_i'}'$ of order $s$,  its root $x_i'$ appends with  branching vertex $x_i$ for $i=1,2,3$, and no any  of order $s+1$. Since $T'$ has four leaves, $x_i$ can not distinct from each other. Thus we may assume that $x_1=x_2=x_3$ or $x_1=x_2\not=x_3$. Whichever happens, $T'$ contains the maximal pendent path $P_s$. By deleting the $P_{s}$ from $T$ and $T'$, similar as the arguments in the proof of (ii) we get $\{a_1,a_2,a_3,a_4\}=\{s_1,s_2,t_1,t_2\}$. It is clear that $T'\not=T_{a_1,a_2,a_3,a_4}$ because $r_i'=r_i=3$ for $i\in [s_1+s_2+1,t_2]$. Thus $T'=P_{s_1}\bullet_{u'} P_{s_2}+P_{u'v'}'+P_{t_1}\bullet_{v'} P_{t_2}\cong T$.

We complete this proof.
\end{proof}

Let $T$ be a tree on $n$ vertices with exactly two branching vertices $u$ and $v$ connecting by a path $P_{uv}=uu_1\cdots u_{k-1}v$ such that $T_u=T_u(uu_1)$ and $T_v=T_v(u_{k-1}v)$ are two starlike tree, where
\begin{equation}\label{st-eq-1}\left\{\begin{array}{ll}
T_u-u=\cup_{i=1}^{k_1} P_{s_i}, \mbox{ where $1\leq s_1\le s_2\le \cdots\le s_{k_1}$}\\
T_v-v=\cup_{i=1}^{k_2}P_{t_i}, \mbox{ where  $1\leq t_1\le t_2\le \cdots\le t_{k_2}$. }
 \end{array}\right.
\end{equation}
$T$ is called \emph{equivalent double starlike tree} if $s_1=\cdots=s_{k_1}=t_1=\cdots=t_{k_2}=s$, denoted by $DT_{s,k_1,k_2}$. Specially, $DT_{1,k_1,k_2}$ is called \emph{double broom graph}. By using  the above symbols we have the following result.

\begin{theorem}\label{D-thm-1}
Let $T=DT_{s,k_1,k_2}\in \mathcal{T}_n$ be a equivalent double starlike tree with two branching vertices $u$ and $v$ defined in Eq.(\ref{st-eq-1}). If $|k_1-k_2|\leq1$, then $T$ is DEDV-tree.
\end{theorem}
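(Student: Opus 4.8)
The plan is to compute $\mathbf{r}(T)$ explicitly, read off its two decisive features, and then reconstruct any $T'$ with $\mathbf{r}(T')=\mathbf{r}(T)$ until it is forced to coincide with $T$. Assume without loss of generality $k_1\le k_2$, so $k_2\in\{k_1,k_1+1\}$, and recall $k_1,k_2\ge 2$ since $u,v$ are branching. Writing the connecting path as $u=w_0,\dots,w_k=v$, each pendent $P_s$ contributes exactly one edge with each $\mu$-value $1,2,\dots,s$, while each core edge satisfies $\mu_T(w_iw_{i+1})=\min\{1+k_1s+i,\;k-i+k_2s\}$. Since $k_1\ge 2$, the least core value is $1+k_1s>s$ (attained at $i=0$). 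Hence $r_1=\cdots=r_s=k_1+k_2$, there is a gap $r_{s+1}=\cdots=r_{k_1s}=0$, and the remaining nonzero entries come only from the $k$ core edges. In the asymmetric case $k_2=k_1+1$, the value $k_1s+1$ is attained by a single core edge ($i=0$ only, because $k-i+k_2s=k_1s+1$ would force $i=k-1+s\notin[0,k-1]$), so $r_{k_1s+1}=1$.

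Now let $T'\in\mathcal{T}_n$ with $T'\approx T$. Since $r_1=\cdots=r_s=k_1+k_2$, Lemma~\ref{r-lem-1} gives that $T'$ has exactly $k_1+k_2$ pendent paths $P_s$, hence exactly $k_1+k_2$ leaves. Because $r_{s+1}=0$, $T'$ has no pendent subtree of order $s+1$, so no maximal pendent path of $T'$ can exceed length $s$; combined with the previous sentence this forces every maximal pendent path of $T'$ to equal $P_s$. Deleting these $k_1+k_2$ pendent paths leaves a reduced tree $\hat T$ on $n-(k_1+k_2)s=k+1$ vertices; writing $p_w$ for the number of $P_s$ attached at $w\in V(\hat T)$, we have $\sum_w p_w=k_1+k_2$, and every edge of $\hat T$ is a core edge of $T'$.

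The heart of the argument is to pin down $\hat T$, and this is the step I expect to be the main obstacle. First, each core edge of $T'$ has $\mu>s$: removing an edge of $\hat T$ leaves each side containing a leaf of $\hat T$, and such a leaf $w$ must carry $p_w\ge 2$ (otherwise $w$ would be a leaf or degree-two vertex of $T'$, extending its pendent path beyond length $s$), so each side has more than $s$ vertices. By the gap every core edge then has $\mu\ge k_1s+1$. Applying this to the pendant edge of $\hat T$ at a leaf $w$, whose smaller side has $1+s\,p_w$ vertices, yields $p_w\ge k_1$ for \emph{every} leaf $w$ of $\hat T$. If $\hat T$ has $\lambda$ leaves then $\lambda k_1\le\sum_w p_w=k_1+k_2\le 2k_1+1<3k_1$, so $\lambda\le 2$; thus $\hat T=P_{k+1}$ is a path $w_0\cdots w_k$ with $p_{w_0},p_{w_k}\ge k_1$. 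This counting is exactly where $|k_1-k_2|\le 1$ is indispensable: without near-balance one cannot bound the number of leaves of $\hat T$, and an unbalanced branching can genuinely be split into several branching vertices.

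It remains to place the branches. Since $p_{w_0}+p_{w_k}\ge 2k_1$ and $\sum_w p_w=k_1+k_2$, the internal vertices of the path carry at most one $P_s$ in total. If $k_2=k_1$ the total is $2k_1$, forcing $p_{w_0}=p_{w_k}=k_1$ and no internal branch, so $T'$ is the double starlike tree with a length-$k$ path joining two vertices bearing $k_1$ copies of $P_s$ each, i.e.\ $T'\cong T$. If $k_2=k_1+1$, either the internal total is $0$, giving $\{p_{w_0},p_{w_k}\}=\{k_1,k_1+1\}$ and again $T'\cong T$; or $p_{w_0}=p_{w_k}=k_1$ with a single $P_s$ at an internal $w_j$. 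In this last case both pendant edges of $\hat T$ attain the value $k_1s+1$, so $r_{k_1s+1}(T')=2\ne 1=r_{k_1s+1}(T)$, contradicting $T'\approx T$. Hence in every case $T'\cong T$, so $T=DT_{s,k_1,k_2}$ is a DEDV-tree.
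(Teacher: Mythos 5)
Your proof is correct, and its decisive step is genuinely different from the paper's. Both arguments open the same way: compute $\mathbf{r}(T)$, invoke Lemma \ref{r-lem-1} to get exactly $k_1+k_2$ maximal pendent paths $P_s$ in any $T'\approx T$, and exploit the gap $r_{s+1}=\cdots=r_{k_1s}=0$. They diverge in how the global shape of $T'$ is pinned down. The paper anchors on the unique pendent subtree of order $sk_1+1$ guaranteed by $r_{sk_1+1}=1$, proves it is a pendent starlike tree $k_1*P_s$ with centre $u'$, and then asserts that the remaining $k_2$ paths likewise gather at a single vertex $v'$, finishing with an edge count for the connecting path. You instead delete all maximal pendent paths to obtain a reduced tree $\hat T$ on $k+1$ vertices, show each leaf $w$ of $\hat T$ satisfies $p_w\ge k_1$ (the gap applied to the pendant edges of $\hat T$), and force $\hat T$ to be a path by the pigeonhole bound $\lambda k_1\le k_1+k_2\le 2k_1+1<3k_1$; the entry $r_{k_1s+1}=1$ is used only once, to exclude the single residual configuration (one $P_s$ at an internal vertex) when $k_2=k_1+1$, and not at all when $k_1=k_2$. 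This leaf-counting step has no analogue in the paper, and it is exactly what makes the role of the hypotheses $k_1,k_2\ge 2$ and $|k_1-k_2|\le 1$ transparent; it also rigorously rules out a third branching vertex, which is precisely where the paper is terse (its claim that $T'$ contains a pendent subtree $T'_{v'}$ with $T'_{v'}-v'=k_2*P_s$ is asserted rather than argued, and when $k\le s$ that subtree is not even a pendent subtree in the technical sense $\mu=|T'_{v'}|$, so your detour is a real gain in rigor). What the paper's route buys is brevity. One cosmetic remark: in discussing the pendant edge of $\hat T$ you call $1+sp_w$ the ``smaller side,'' which is neither known a priori nor needed, since $\mu\ge k_1s+1$ already forces both sides of that edge to have at least $k_1s+1$ vertices.
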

\begin{proof}
Let $\mathbf{r}(T)=(r_1,r_2,\ldots,r_{\lfloor\frac{n}{2}\rfloor})$ be the edge division vector of $T$. Without loss of generality, we assume that $k_1\leq k_2$. If $k_1=k_2-1$, according to definition, we have
\begin{equation}\label{qq-1}\left\{\begin{array}{ll}
r_j=k_1+k_2&j=1,\ldots,s\\
r_{j}=0& j=s+1,\ldots,sk_1\\
r_{j}=1& j=sk_1+1.\\
\end{array}\right.
\end{equation}
Assume that there exists $T'\approx T$, i.e.,  $\mathbf{r}(T')=(r_1',r_2',\ldots,r_{\lfloor\frac{n}{2}\rfloor}')=\mathbf{r}(T)$, in what follows we will show that  $T'\cong T$.  First we would verify the following Claims:\\
(i) $T'$ has $k_1+k_2$ leaves, each of them is included in a pendent path $P_s$;\\
(ii) $T'$ contains a pendent starlike tree $T'_{u'}$ with centre vertex $u'$ and $T'_{u'}-u'=k_1* P_{s}$.

In fact, (i) is obvious by  Lemma \ref{r-lem-1}. At last, we prove (ii). From (\ref{qq-1}) we know that $r'_{sk_1+1}=r_{sk_1+1}=1$. It implies that $T'$ has pendent subtree $T_{u'}'$ with root vertex $u'$ and $|T_{u'}'|=sk_1+1$. Since $r'_{j}=r_{j}=0$ for $j=s+1,\ldots, sk_1$ and (i) we know that any maximal pendent subtree of $T'$ properly included  in $T_{u'}'$ must be path $P_s$. Therefore, $T_{u'}'$ is a starlike tree with $|T_{u'}'|=sk_1+1$, i.e., $T'_{u'}-u'=k_1* P_{s}$. Recall that $k_2=k_1+1$. $T'$ must contain a pendent subtree $T'_{v'}$ with root vertex $v'$ and $T'_{v'}-v'=k_2* P_{s}$ since $r'_{sk_1+1}=r_{sk_1+1}=1$.
Let  $P_{u'v'}'$ be the path connecting $u'$ and $v'$, we have $T'=T'_{u'}+P_{u'v'}'+T'_{v'}$ and so $|P_{u'v'}'|=k$ by Claim \ref{r-claim-1} (d). Therefore,  $T'\cong T $.
If $k_1=k_2$, we get $T'_{u'}-u'=T'_{v'}-v'=k_1* P_{s}$. By the similar arguments mentioned in previous situation, we get $T'\cong T$.

We complete this proof.
\end{proof}

If $s=1$, such tree  $DT_{1,\lfloor\frac{n-k-1}{2}\rfloor,\lceil\frac{n-k-1}{2}\rceil}$ called the \emph{balanced double broom tree}.
At a special case of Theorem \ref{D-thm-1}, we have the following result.

\begin{corollary}\label{thm-3-4}
If $|k_1-k_2|\leq1$, then $DT_{1,k_1,k_2}$ is a DEDV-tree.
\end{corollary}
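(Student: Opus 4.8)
The plan is to recognize that Corollary~\ref{thm-3-4} is nothing more than the specialization of Theorem~\ref{D-thm-1} to the case $s=1$. Recall that the double broom graph $DT_{1,k_1,k_2}$ is by definition the equivalent double starlike tree $DT_{s,k_1,k_2}$ with all pendent path lengths equal to $s=1$; that is, every pendent path $P_{s_i}$ and $P_{t_j}$ appearing in Eq.~(\ref{st-eq-1}) collapses to a single vertex $P_1$. Thus $DT_{1,k_1,k_2}$ is precisely the tree with two branching vertices $u$ and $v$ joined by the path $P_{uv}$ of length $k$, where $u$ carries $k_1$ pendent leaves and $v$ carries $k_2$ pendent leaves, and this is a legitimate member of the family $DT_{s,k_1,k_2}$ obtained by setting $s=1$.

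First I would check that the hypothesis of Theorem~\ref{D-thm-1} is satisfied. The corollary assumes exactly $|k_1-k_2|\leq 1$, which is verbatim the condition required by the theorem, and Theorem~\ref{D-thm-1} places no restriction on $s$ beyond $s$ being a positive integer, so fixing $s=1$ is admissible. Hence both branching vertices still support $k_1$ and $k_2$ identical pendent paths, and the balancedness hypothesis carries over unchanged.

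Having verified the hypotheses, I would simply invoke Theorem~\ref{D-thm-1} with $s=1$ to conclude that $DT_{1,k_1,k_2}$ is a DEDV-tree. The theorem's proof already treats the two subcases $k_1=k_2-1$ and $k_1=k_2$ by computing the edge division vector as in Eq.~(\ref{qq-1}) and showing that any EDV-equivalent tree $T'$ is forced to reassemble the same two pendent starlike components along a path of the same length $k$; specializing those computations to $s=1$ (so that each $P_s$ is a single leaf and the pendent starlike components become stars) requires no new argument. Consequently there is no real obstacle here: the entire content of the corollary is the observation that a balanced or near-balanced double broom is the $s=1$ instance of an equivalent double starlike tree, whence the DEDV property is immediate from the more general theorem.
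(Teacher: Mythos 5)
Your proposal is correct and coincides exactly with the paper's treatment: the paper states Corollary~\ref{thm-3-4} as the special case $s=1$ of Theorem~\ref{D-thm-1} (the double broom $DT_{1,k_1,k_2}$ being by definition the equivalent double starlike tree with $s=1$), offering no separate argument beyond this specialization. Your verification that the hypothesis $|k_1-k_2|\leq 1$ transfers verbatim and that $s=1$ is admissible is precisely the (implicit) content of the paper's deduction.
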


At the last of this section, we will give a method to construct DEDV-trees from some known DEDV-trees. We begin with some notions and symbols. Given a graph $G$ with vertex set $V=\{v_1,v_2,\ldots,v_n\}$ and a graph $H$ with root vertex $u$, the rooted product graph $G\diamond_u H$ is defined as the graph obtained from $G$ and $n$ copies of $H$ by identifying  vertex $v_i$ in $G$ with $u$ in the $i$-th copy of $H$. Let $P_s$ be the path of order $s$ with  one end point $u$ as its root vertex,  the rooted product graph $\bar{G}_s=G\diamond_u P_s$ is shown in Figure \ref{fig-root}. The corona product graph $G\circ H$ is defined as the graph obtained from $G$ and  $n$ copies of $H$ by joining the vertex $v_i$ of $G$ to every vertex in the $i$-th copy of $H$. If we take $H=sK_1$ ($s\geq 1$), then the corona product graph $\tilde{G}_s=G\circ sK_1$ is shown in Figure \ref{fig-root}. Particularly, if $G$ is taken as a tree $T\in \mathcal{T}_n$, we denote $\bar{T}_s=T\diamond_u P_s$ and $\tilde{T}_s=T\circ sK_1$. Let $\mathbf{r}(T)=(r_1,r_2,\ldots,r_{\lfloor\frac{n}{2}\rfloor})$ and $\mathbf{r}(\bar{T}_s)=(\bar{r}_1,\bar{r}_2,\ldots)$, $\mathbf{r}(\tilde{T}_s)=(\tilde{r}_1,\tilde{r}_2,\ldots)$. According to definition, one can simply verify that
\begin{equation}\label{gamma-eq-1}\bar{r}_i=\left\{\begin{array}{ll}
n& \mbox{ if $i\le s-1$}\\
r_{k}& \mbox{ if $i=ks$, where $k=1,2,\ldots$}\\
0& \mbox{ if $s<i\not=ks$}
\end{array}\right.
\end{equation}
\begin{equation}\label{gamma-eq-2}\tilde{r}_i=\left\{\begin{array}{ll}
ns& \mbox{ if $i=1$}\\
r_{k}& \mbox{ if $i=k(s+1)$, where $k=1,2,\ldots$}\\
0& \mbox{ if $1<i\not=k(s+1)$}
\end{array}\right.
\end{equation}
It is clear that $\mathbf{r}(\bar{T}_s)$ and  $\mathbf{r}(\tilde{T}_s)$ are  determined by $\mathbf{r}(T)$. Using the above symbols,  we can state   the following result.

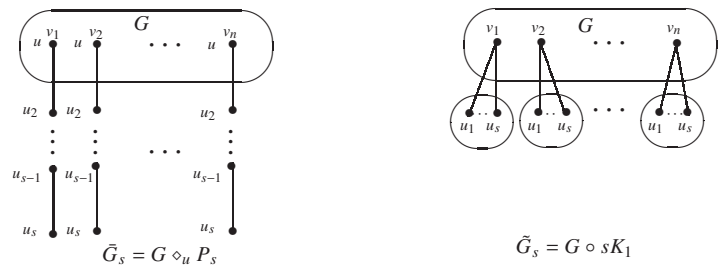
\begin{figure}[htb]
  \centering
\unitlength 1mm 
\linethickness{0.4pt}
\ifx\plotpoint\undefined\newsavebox{\plotpoint}\fi 
\begin{picture}(92.631,32.548)(0,0)
\put(16.794,27.577){\oval(33.588,9.546)[]}
\put(4.419,27.9){\circle*{1}}
\put(10.253,27.9){\circle*{1}}
\put(28.284,27.9){\circle*{1}}
\put(4.419,27.577){\line(0,-1){9.016}}
\put(4.419,11.667){\line(0,-1){9.192}}
\put(4.4,18.738){\circle*{1}}
\put(4.4,11.314){\circle*{1}}
\put(4.4,2.652){\circle*{1}}
\put(3.8,13.3){$\vdots$}
\put(17,26.7){$\cdots$}
\put(3.5,29){\tiny$v_1$}
\put(9.2,29){\tiny$v_2$}
\put(27,29){\tiny$v_n$}
\put(1.8,27.577){\tiny$u$}
\put(0.4,18.562){\tiny$u_2$}
\put(-1,10){\tiny$u_{s-1}$}
\put(.4,2.652){\tiny$u_s$}
\put(10.253,27.952){\line(0,-1){9.016}}
\put(10.253,12.042){\line(0,-1){9.192}}
\put(10.253,19.114){\circle*{1}}
\put(10.076,11.689){\circle*{1}}
\put(10.253,3.027){\circle*{1}}
\put(9.5,13.3){$\vdots$}
\put(7.2,27.5){\tiny$u$}
\put(6.2,18.5){\tiny$u_2$}
\put(6,10){\tiny$u_{s-1}$}
\put(6.2,2.6){\tiny$u_s$}
\put(28.284,27.952){\line(0,-1){9.016}}
\put(28.284,12.042){\line(0,-1){9.192}}
\put(28.284,19.114){\circle*{1}}
\put(28.107,11.689){\circle*{1}}
\put(28.284,3.027){\circle*{1}}
\put(27.5,13.3){$\vdots$}
\put(25,27.2){\tiny$u$}
\put(24,18.2){\tiny$u_2$}
\put(23,10){\tiny$u_{s-1}$}
\put(24,3.027){\tiny$u_s$}
\put(17,12.5){$\cdots$}
\put(15.203,29.7){\scriptsize$G$}
\put(75.837,27.775){\oval(33.588,9.546)[]}
\put(63.462,28.129){\circle*{1}}
\put(69.296,28.129){\circle*{1}}
\put(87.327,27.952){\circle*{1}}
\put(76,27){$\cdots$}
\put(62,29.5){\tiny$v_1$}
\put(68,29.5){\tiny$v_2$}
\put(86,29.5){\tiny$v_n$}
\put(74.246,29.5){\scriptsize$G$}
\put(59.751,18.738){\circle*{1}}
\put(63.463,18.738){\circle*{1}}
\multiput(63.286,28.107)(-.03344424,-.089184639){111}{\line(0,-1){.089184639}}
\put(63.286,28.284){\line(0,-1){9.899}}
\put(58.5,16.5){\tiny$u_1$}
\put(62,16.5){\tiny$u_s$}
\put(60,18){\tiny$\cdots$}
\put(61.43,17.501){\oval(8.309,7.071)[]}
\put(68.943,18.919){\circle*{1}}
\put(72.657,18.919){\circle*{1}}
\put(67.5,16.5){\tiny$u_1$}
\put(71.5,16.5){\tiny$u_s$}
\put(69,18){\tiny$\cdots$}
\put(76,18){$\cdots$}
\put(70.622,17.677){\oval(8.309,7.071)[]}
\put(85.03,18.92){\circle*{1}}
\put(88.744,18.92){\circle*{1}}
\put(83.5,16.5){\tiny$u_1$}
\put(87.5,16.5){\tiny$u_s$}
\put(86,18){\tiny$\cdots$}
\put(86.709,17.677){\oval(8.309,7.071)[]}
\put(69.12,19.092){\line(0,1){9.192}}
\multiput(69.12,28.284)(.033587572,-.097227182){100}{\line(0,-1){.097227182}}
\multiput(85.03,18.738)(.03330575,.135785){69}{\line(0,1){.135785}}
\multiput(87.151,28.284)(.03335409,-.18678292){53}{\line(0,-1){.18678292}}
\put(11,-1){\scriptsize$\bar{G}_s=G\diamond_u P_s$}
\put(66,0){\scriptsize$\tilde{G}_s=G\circ sK_1$}
\end{picture}
  \caption{The rooted product graph $G\diamond_u P_s$ and corona product graph $G\circ sK_1$}\label{fig-root}
\end{figure}

\begin{theorem}\label{PP-thm-1}
Let $T$ be a DEDV-tree of order $n$. Then we have\\
(a) $\bar{T}_s=T\diamond_uP_s$ ($s\geq 1$) is DEDV-tree.\\
(b) $\tilde{T}_s=T\circ sK_1$ ($s\geq 1$) is DEDV-tree.
\end{theorem}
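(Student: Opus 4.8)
The plan is to treat both parts through a common mechanism: the assignments $T\mapsto\bar T_s$ and $T\mapsto\tilde T_s$ are \emph{invertible} at the level of edge division vectors. Reading (\ref{gamma-eq-1}) backwards gives $r_k(T)=\bar r_{ks}$, and reading (\ref{gamma-eq-2}) backwards gives $r_k(T)=\tilde r_{k(s+1)}$, so $\mathbf r(T)$ is uniquely recoverable from $\mathbf r(\bar T_s)$ (resp.\ $\mathbf r(\tilde T_s)$). Consequently the whole theorem reduces to a single structural statement: any tree $G'$ with $\mathbf r(G')=\mathbf r(\bar T_s)$ must itself be a rooted product $G'=H\diamond_u P_s=\bar H_s$ for some tree $H$ (resp.\ $G'=H\circ sK_1=\tilde H_s$). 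Once this is known, applying the inverse formula to the equal vectors $\mathbf r(\bar H_s)=\mathbf r(G')=\mathbf r(\bar T_s)$ yields $r_k(H)=r_k(T)$ for every $k$, i.e.\ $H\approx T$; since $T$ is a DEDV-tree this forces $H\cong T$, and an isomorphism of base trees clearly lifts to $\bar H_s\cong\bar T_s$, that is $G'\cong\bar T_s$. The case $s=1$ is trivial for (a) since $\bar T_1=T$, and the corona case (b) is handled identically with $P_s$ replaced by $sK_1$.

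For (a) I would establish the structural statement as follows. Since $\bar r_1=\cdots=\bar r_{s-1}=n$ and, by Claim \ref{r-claim-1}(a), $G'$ has exactly $n$ leaves, Lemma \ref{r-lem-1} (applied with its ``$s$'' equal to $s-1$ and its ``$k$'' equal to $n$) shows that each of the $n$ leaves terminates a pendent path on at least $s-1$ vertices. Next I would exploit the zero-pattern of (\ref{gamma-eq-1}): $\bar r_i=0$ whenever $s<i<2s$. A maximal pendent path on $m$ vertices realizes every value $\mu=1,\dots,m$, so $m\ge s+1$ would produce the forbidden value $\mu=s+1$; hence every maximal pendent path has exactly $s-1$ or $s$ vertices. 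More generally, the vanishing of $\bar r_i$ for all $i$ strictly between consecutive multiples of $s$ confines the branching: no pendent subtree can have order lying in an open interval $(ks,(k+1)s)$. Using this rigidity I would strip from each leaf the outermost $s-1$ vertices of its arm; the vanishing gaps guarantee that these deletions are disjoint and that exactly one arm is removed per surviving vertex, so that the remaining tree $H$ has $|H|=n$ and satisfies $G'=\bar H_s$.

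For (b) the argument runs parallel but is slightly simpler. Here $\tilde r_1=ns$ while $\tilde r_2=\cdots=\tilde r_s=0$, so by Claim \ref{r-claim-1} the tree $G'$ has $ns$ leaves and no pendent $P_2$, and in fact no pendent subtree of order $2,\dots,s$ at all. Since $\tilde r_{s+1}=r_1(T)$ and $\tilde r_i=0$ for $s+1<i<2(s+1)$, the smallest nontrivial pendent subtrees have order exactly $s+1$ and, having no internal pendent path, must be stars $S_{s+1}$. Propagating this through the zero-gaps forces the $ns$ leaves to occur in bundles of exactly $s$ attached to $n$ centre vertices; deleting all leaves then yields a tree $H$ on $n$ vertices with $G'=\tilde H_s$, and the common recovery step concludes.

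The main obstacle is precisely this structural identification: converting the numerical zero-pattern of $\mathbf r$ into the rigid global shape of a rooted product (resp.\ corona). The delicate point for (a) is that $G'$ may contain pendent subtrees which happen to be paths yet carry a branching vertex in their interior --- exactly as $\bar T_s$ does at the images of the leaves of $T$ --- so controlling only the \emph{maximal} pendent paths does not suffice. What rescues the argument is that the admissible pendent-subtree orders form the set $\{1,\dots,s-1\}\cup\{ks:k\ge 1\}$, with every open interval $(ks,(k+1)s)$ empty; this is what localizes the branching vertices and lets one peel the arms consistently. Making this peeling well-defined and checking that it reconstructs a single tree $H$ with $G'=\bar H_s$ (resp.\ $G'=\tilde H_s$) is the technical heart of the proof, whereas the remaining steps --- the inverse formulas and the appeal to $T$ being a DEDV-tree --- are routine.
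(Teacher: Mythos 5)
Your overall plan is the one the paper follows: peel off the $n$ pendent paths $P_{s-1}$ (resp.\ the pendent leaves), argue that the remainder $H$ satisfies $G'=\bar H_s$ (resp.\ $G'=\tilde H_s$), recover $\mathbf r(H)=\mathbf r(T)$ by inverting (\ref{gamma-eq-1})--(\ref{gamma-eq-2}), and finish with the DEDV hypothesis on $T$. Those reduction steps are correct and match the paper's proof exactly. The problem is the step that you yourself label the technical heart: it is not carried out, and the mechanism you offer for it is insufficient.

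You assert that the vanishing gaps ($\bar r_i=0$ for $ks<i<(k+1)s$) \emph{guarantee} that exactly one arm is removed per surviving vertex. They do not. The gaps only impose a congruence: if a surviving vertex $z$ carries $p$ full arms and its other descending components have orders divisible by $s$, then the pendent subtree at $z$ has order $1+p(s-1)+(\mbox{multiple of }s)$, and this avoids every gap precisely when $p\equiv 1\pmod s$, not only when $p=1$. Concretely, for $s=2$ a pendent star $S_4$ (one surviving vertex carrying three arms) realizes only the pendent-subtree orders $1$ and $4$ and so violates none of the vanishing constraints; an analogous configuration with $s+1$ arms exists for every $s$, and in case (b) a centre carrying $2s+1$ leaves gives a pendent star of order $2(s+1)$, again compatible with all zero entries of (\ref{gamma-eq-2}). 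What actually eliminates these configurations is a global count that your sketch never invokes: $\bar r_{s-1}=n$ gives exactly $n$ arms, hence exactly $ns-n(s-1)=n$ surviving vertices; since every surviving vertex that possesses a pendent subtree must carry $p\equiv 1\pmod s$ arms, hence at least one, a single vertex with $s+1$ of them pushes the total above $n$ (only the one or two centroid-like vertices lacking pendent subtrees need a separate short argument). The paper closes this hole differently, by an induction over pendent subtrees of order $ks$ that carries the stronger invariant $|H_y|=s\,|V(T')\cap V(H_y)|$ rather than mere divisibility by $s$. One of these two ingredients --- the strengthened inductive invariant or the global arm count --- must be supplied; with only the zero-pattern, your peeling step, and with it both (a) and (b), does not go through.
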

\begin{proof}
First we prove (a). Suppose that there is a tree $H\approx\bar{T}_s$ and $H_x$ is any pendent subtree  of $H$ with root $x$. By Lemma \ref{r-lem-1}, we see that $H$ has exactly $n$   pendent paths $P_{s-1}$ due to $\bar{r}_{i}=n$ for $i=1,2,\ldots,s-1$, thus $H_x\cong P_i$ for $i=|H_x|<s$ and is included in  $P_{s-1}$. Now by deleting these $n$  pendent paths $P_{s-1}$ from $H$, we get a subtree $T'$ from  $H$. Suppose that  $\mathbf{r}(T')=(r_1',r_2',\ldots,r_{\lfloor\frac{n}{2}\rfloor}')$. In what follows we only need to show that each vertex of $T'$ joins one end point of path $P_{s-1}$ in $H$ and thus $\mathbf{r}(T')=\mathbf{r}(T)$ according to (\ref{gamma-eq-1}). Consequently,  $T'\cong T$ due to $T$ is a DEDV-tree.

From (\ref{gamma-eq-1}) we know that any pendent subtree of $H$ with order less than $s$ is a path by Lemma \ref{r-lem-1}. Note that $\bar{r}_{s}=r_{1}\not=0$, $H$ has  pendent subtree $H_x$ with $|H_x|=s$. Again by Lemma \ref{r-lem-1}, the root $x$ is pendent vertex of $T'$ and so $H_x=P_s$. Let $H_y$ be any  pendent subtree of $H$ with order $|H_y|=ks\ge s$, where $k\ge 1$, and assume that each vertex of $V(T')\cap V(H_y)$  joins one end point of $P_{s-1}$. Now if $i=|H_z|>ks$ then $i=k's$ for some $k'>k$ because $\bar{r}_i=0$ if $s\nmid i$. Let $z_1,\ldots,z_t$ be  adjacent vertices of $z$ in $H_z$ and $H_{z_i}$ be the component of $H_z-z_iz$ containing $z_i$ where $1\le i\le t$. By induction hypothesis, each vertex of $V(T')\cap V(H_{z_i})$ joins one end point of $P_{s-1}$ and hence $|H_{z_i}|=s|V(T')\cap V(H_{z_i})|$. Note that $|V(T')\cap V(H_z)|=|V(T')\cap V(H_{z_1})|+\cdots+|V(T')\cap V(H_{z_t})|+1$, where $z\in V(T')$ contributes $1$, and we have $s|V(T')\cap V(H_z)|=s|V(T')\cap V(H_{z_1})|+\cdots+s|V(T')\cap V(H_{z_t})|+s=|H_{z_1}|+\cdots+|H_{z_t}|+s$. Since $|H_z|=k's$ has the form of $s| V(T')\cap V(H_z)|$, we claim that $z$ joins exactly one end point of $P_{s-1}$. Therefore, each vertex of $T'$ joins one end point of $P_{s-1}$ by induction. By considering the edge division vector of $H$ we have
\begin{equation}\label{gamma-eq-3}\bar{r}_i=\left\{\begin{array}{ll}
n& \mbox{ if $i\le s-1$}\\
r_{k}'& \mbox{ if $i=ks$}\\
0& \mbox{ if $s<i\not=ks$.}
\end{array}\right.
\end{equation}
From (\ref{gamma-eq-1}) and (\ref{gamma-eq-3}) we see that $r_k=r_k'$ for $k\ge 1$, and so $\mathbf{r}(T)=\mathbf{r}(T')$.

As similar as (a), one can verify (b).
\end{proof}

\begin{example}\label{TT-exa-2}
According to Theorem \ref{PP-thm-1}, $\bar{(P_n)}_s=P_n\diamond_u P_s$ and $\tilde{(P_n)}_s=P_n\circ sK_1$ are DEDV-trees since $P_n$ is DEDV-tree.
\end{example}

\section{Construction for  EDV-equivalent trees and DEDV-trees}

In this section, we will use the branch-exchange transformation  to construct EDV-equivalent trees, especially we give all the  EDV-equivalent trees of vertices  no more than $10$ and consequently the corresponding DEDV-trees are also determined.

First of all, note that a tree of order less than $7$ has at most two branching vertices, we get the following result by Theorem \ref{st-thm-1} and Proposition \ref{pro-4-4}.

\begin{proposition}\label{pro-3-1-0}
There is no any non-isomorphic EDV-equivalent trees of  order less than $7$, and equivalently any tree $T$ of order $n$ is a  DEDV-tree if $n<7$.
\end{proposition}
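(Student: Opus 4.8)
The plan is to prove Proposition \ref{pro-3-1-0} by showing that any non-isomorphic pair of EDV-equivalent trees must have a tree with at least three branching vertices, hence order at least $7$; equivalently, every tree on fewer than $7$ vertices is a DEDV-tree. First I would classify trees by their number of branching vertices (vertices of degree $\ge 3$). A tree with no branching vertex is a path $P_n$, and a tree with exactly one branching vertex is a starlike tree $T_{l_1,\ldots,l_k}$. By the corollary following Lemma \ref{r-lem-1} (stating balanced starlike trees, including $S_n$ and $P_n$, are DEDV-trees) together with Theorem \ref{st-thm-1} and Proposition \ref{pro-s-3}, I claim that every starlike tree on fewer than $7$ vertices is a DEDV-tree: for $k\le 3$ leaves this is immediate from Proposition \ref{pro-s-3}, while for $k\ge 4$ leaves on at most $6$ vertices the tree is forced to be weak balanced (there is simply not enough room for the imbalance $l_1+l_2<l_k$ to occur when the total vertex count is so small), so Theorem \ref{st-thm-1} applies.

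Next I would handle the two-branching-vertex case, which is exactly the family treated in Proposition \ref{pro-4-4}, namely $T=P_{s_1}\bullet_u P_{s_2}+P_{uv}+P_{t_1}\bullet_v P_{t_2}$. The key observation is that such a tree already uses up $n=s_1+s_2+t_1+t_2+k$ vertices (where $k=|P_{uv}|$), and with $n<7$ the parameters are severely constrained. I would enumerate the few feasible parameter tuples with $s_1,s_2,t_1,t_2\ge 1$, $k\ge 1$, and $n\le 6$, and check that in each case at least one of the four conditions (i)--(iv) of Proposition \ref{pro-4-4} holds, so that $T$ is a DEDV-tree. The smallest tree with two branching vertices is the ``H-tree'' (double star $S_{2,2}$-like shape) on $6$ vertices, and a short case analysis confirms DEDV status for all such trees on $\le 6$ vertices.

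Finally I would argue that a tree on fewer than $7$ vertices cannot have three or more branching vertices: three branching vertices each of degree $\ge 3$, together with the internal paths connecting them and the pendent edges they force, require at least $7$ vertices (each branching vertex contributes at least two leaves or path-neighbours beyond the connecting structure). Since the only source of non-isomorphic EDV-equivalent pairs in the branch-exchange framework (Lemma \ref{lem-2-3}, Theorem \ref{thm-3-2-1}) is a genuine choice of balanced components arising from multiple branching vertices, and since paths, starlike trees, and two-branching-vertex trees of order $<7$ have all been shown to be DEDV, no non-isomorphic EDV-equivalent pair can exist below order $7$. The main obstacle I anticipate is the bookkeeping in the second step: making sure the finite enumeration of two-branching-vertex trees on $\le 6$ vertices is exhaustive and that each falls into one of the four DEDV conditions of Proposition \ref{pro-4-4}, rather than any conceptual difficulty.
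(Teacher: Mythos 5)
Your proposal is correct and takes essentially the same route as the paper: the paper's entire justification is that a tree of order less than $7$ has at most two branching vertices, so that Proposition \ref{pro-s-3} and Theorem \ref{st-thm-1} (starlike case) together with Proposition \ref{pro-4-4} (two-branching-vertex case, which for $n\le 6$ reduces to the single tree $s_1=s_2=t_1=t_2=k=1$ satisfying condition (i)) finish the argument, and you simply make explicit the small-case checks the paper leaves implicit. One small caution: you should drop the clause asserting that branch-exchange is ``the only source'' of non-isomorphic EDV-equivalent pairs, since the paper never proves this (it is posed as an open Problem in Section 5) and it is unnecessary anyway --- once every tree of order less than $7$ is shown to be a DEDV-tree, the nonexistence of non-isomorphic EDV-equivalent pairs follows directly from the definition.
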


Except for the individual tree, all the trees of order $7\le n\le 8$ have at most two branching vertices, we can select the trees that do not satisfy Theorem \ref{st-thm-1} and Proposition \ref{pro-4-4}, which are listed in the Figure \ref{fig-3}, fortunately one can verify that they are just all families of non-isomorphic EDV-equivalent trees of order $7\le n\le 8$. We summarize them in Proposition \ref{pro-3-1} and Proposition \ref{pro-3-2}.

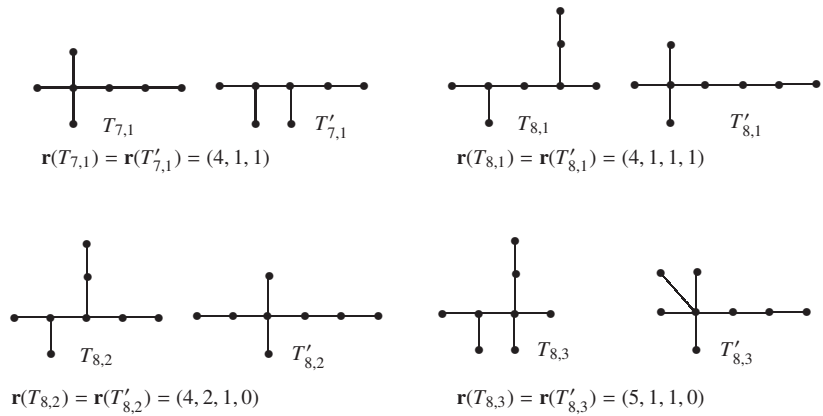
\begin{figure}[h]
\centering
\unitlength .8mm 
\linethickness{0.4pt}
\ifx\plotpoint\undefined\newsavebox{\plotpoint}\fi 
\begin{picture}(134.239,70.898)(0,0)
\put(4.302,57.65){\circle*{1.2}}
\put(10.293,57.65){\circle*{1.2}}
\put(16.284,57.65){\circle*{1.2}}
\put(22.276,57.65){\circle*{1.2}}
\put(28.267,57.65){\circle*{1.2}}
\put(10.293,63.649){\circle*{1.2}}
\put(10.293,51.561){\circle*{1.2}}
\put(4.091,57.658){\line(1,0){24.281}}
\put(10.293,63.859){\line(0,-1){12.298}}
\put(15.128,50.3){\scriptsize$T_{7,1}$}
\put(34.644,57.859){\circle*{1.2}}
\put(40.635,57.859){\circle*{1.2}}
\put(46.336,57.859){\circle*{1.2}}
\put(52.619,57.859){\circle*{1.2}}
\put(58.61,57.859){\circle*{1.2}}
\put(40.635,51.597){\circle*{1.2}}
\put(34.433,57.859){\line(1,0){24.281}}
\put(40.531,57.955){\line(0,-1){6.202}}
\put(46.418,58.06){\line(0,-1){6.517}}
\put(46.535,51.597){\circle*{1.2}}
\put(50.412,50.597){\scriptsize$T'_{7,1}$}
\put(5,45){\scriptsize$\mathbf{r}(T_{7,1})=\mathbf{r}(T'_{7,1})=(4,1,1)$}
\put(71.64,20.041){\circle*{1.2}}
\put(77.631,20.041){\circle*{1.2}}
\put(83.622,20.041){\circle*{1.2}}
\put(89.614,20.041){\circle*{1.2}}
\put(107.973,20.25){\circle*{1.2}}
\put(113.674,20.25){\circle*{1.2}}
\put(119.957,20.25){\circle*{1.2}}
\put(125.948,20.25){\circle*{1.2}}
\put(113.756,20.451){\line(0,-1){6.517}}
\put(113.873,13.988){\circle*{1.2}}
\put(87.315,12.988){\scriptsize$T_{8,3}$}
\put(117.75,12.988){\scriptsize$T'_{8,3}$}
\put(74,5){\scriptsize$\mathbf{r}(T_{8,3})=\mathbf{r}(T'_{8,3})=(5,1,1,0)$}
\put(83.627,31.975){\line(0,-1){12.298}}
\put(83.681,32.218){\circle*{1.2}}
\put(83.83,26.718){\circle*{1.2}}
\put(77.735,13.934){\circle*{1.2}}
\put(77.681,20.123){\line(0,-1){6.095}}
\put(113.803,26.474){\line(0,-1){6.202}}
\put(114.006,27.015){\circle*{1.2}}
\put(125.844,20.272){\line(1,0){6.243}}
\put(132.141,20.326){\circle*{1.2}}
\put(71.438,20.124){\line(1,0){18.135}}
\put(83.681,13.935){\circle*{1.2}}
\put(83.478,20.124){\line(0,-1){6.243}}
\put(125.844,20.124){\line(-1,0){18.135}}
\put(107.911,26.867){\circle*{1.2}}
\multiput(107.708,26.962)(.03359887,-.03779661){177}{\line(0,-1){.03779661}}
\put(73.238,57.923){\circle*{1.2}}
\put(79.229,57.923){\circle*{1.2}}
\put(85.22,57.923){\circle*{1.2}}
\put(91.212,57.923){\circle*{1.2}}
\put(97.203,57.923){\circle*{1.2}}
\put(73.027,57.931){\line(1,0){24.281}}
\put(84.064,50.573){\scriptsize$T_{8,1}$}
\put(103.58,58.132){\circle*{1.2}}
\put(109.571,58.132){\circle*{1.2}}
\put(115.272,58.132){\circle*{1.2}}
\put(121.555,58.132){\circle*{1.2}}
\put(127.546,58.132){\circle*{1.2}}
\put(103.369,58.132){\line(1,0){24.281}}
\put(119.348,50.87){\scriptsize$T'_{8,1}$}
\put(74,45){\scriptsize$\mathbf{r}(T_{8,1})=\mathbf{r}(T'_{8,1})=(4,1,1,1)$}
\put(79.333,51.816){\circle*{1.2}}
\put(79.279,58.005){\line(0,-1){6.095}}
\put(127.442,58.154){\line(1,0){6.243}}
\put(133.739,58.208){\circle*{1.2}}
\put(91.171,70.155){\line(0,-1){12.298}}
\put(91.225,70.398){\circle*{1.2}}
\put(91.374,64.898){\circle*{1.2}}
\put(109.392,58.131){\line(0,-1){6.517}}
\put(109.509,51.668){\circle*{1.2}}
\put(109.439,64.154){\line(0,-1){6.202}}
\put(109.642,64.695){\circle*{1.2}}
\put(.5,19.405){\circle*{1.2}}
\put(6.491,19.405){\circle*{1.2}}
\put(12.482,19.405){\circle*{1.2}}
\put(18.474,19.405){\circle*{1.2}}
\put(24.465,19.405){\circle*{1.2}}
\put(.289,19.413){\line(1,0){24.281}}
\put(11.326,12.055){\scriptsize$T_{8,2}$}
\put(30.842,19.614){\circle*{1.2}}
\put(36.833,19.614){\circle*{1.2}}
\put(42.534,19.614){\circle*{1.2}}
\put(48.817,19.614){\circle*{1.2}}
\put(54.808,19.614){\circle*{1.2}}
\put(30.631,19.614){\line(1,0){24.281}}
\put(42.616,19.815){\line(0,-1){6.517}}
\put(42.733,13.352){\circle*{1.2}}
\put(46.61,12.352){\scriptsize$T'_{8,2}$}
\put(0,5){\scriptsize$\mathbf{r}(T_{8,2})=\mathbf{r}(T'_{8,2})=(4,2,1,0)$}
\put(12.487,31.339){\line(0,-1){12.298}}
\put(12.541,31.582){\circle*{1.2}}
\put(12.69,26.082){\circle*{1.2}}
\put(6.595,13.298){\circle*{1.2}}
\put(6.541,19.487){\line(0,-1){6.095}}
\put(42.663,25.838){\line(0,-1){6.202}}
\put(42.866,26.379){\circle*{1.2}}
\put(54.704,19.636){\line(1,0){6.243}}
\put(61.001,19.69){\circle*{1.2}}
\end{picture}
  \caption{All the pairs of non-isomorphic EDV-equivalent trees on $7,8$ vertices}\label{fig-3}
\end{figure}

\begin{proposition}\label{pro-3-1}
There is exactly one pair of non-isomorphic EDV-equivalent trees on $7$ vertices, which are labelled as $T_{7,1}$ and $T_{7,1}'$ shown in Figure \ref{fig-3}, where $\mathbf{r}(T_{7,1})=\mathbf{r}(T_{7,1}')=(4,1,1)$.
\end{proposition}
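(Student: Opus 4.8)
The plan is to avoid blind comparison of edge division vectors and instead exploit the classification results already in hand (Proposition \ref{pro-s-3}, Theorem \ref{st-thm-1}, Proposition \ref{pro-4-4} and Lemma \ref{r-lem-2}), together with a short finite case analysis. The organizing remark is logical: a DEDV-tree can never occur in a non-isomorphic EDV-equivalent pair, since $T\approx T'$ with $T$ DEDV forces $T'\cong T$. Hence \emph{any} non-isomorphic EDV-equivalent pair in $\mathcal{T}_7$ must consist of two non-DEDV trees, and it suffices to locate all trees on $7$ vertices that fail to be DEDV. First I would bound the number of branching vertices: if a tree in $\mathcal{T}_7$ had three branching vertices, its degree sum would be at least $3\cdot 3+4\cdot 1=13>12=2|E(T)|$, which is impossible. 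So every $T\in\mathcal{T}_7$ has at most two branching vertices, and I would split the analysis into the cases of $0$, $1$ and $2$ branching vertices.

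For zero or one branching vertex the candidates are $P_7$ and the starlike trees $T_{l_1,\dots,l_k}$ with $\sum_i l_i=6$ and $k\ge 3$. Enumerating partitions of $6$ gives $T_{1,1,4},T_{1,2,3},T_{2,2,2}$ ($k=3$), then $T_{1,1,1,3},T_{1,1,2,2}$ ($k=4$), then $T_{1,1,1,1,2}$ ($k=5$) and $S_7$ ($k=6$); together with $P_7$ this is eight trees. Now $P_7$ is a DEDV-tree, the three trees with $k=3$ are DEDV by Proposition \ref{pro-s-3}, and by Theorem \ref{st-thm-1} each starlike tree with $k\ge 4$ is DEDV exactly when it is weak balanced. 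Checking $l_1+l_2\ge l_k$ shows that $T_{1,1,2,2}$, $T_{1,1,1,1,2}$ and $S_7$ are weak balanced, whereas $T_{1,1,1,3}$ fails it ($1+1<3$). Thus among trees with at most one branching vertex, $T_{1,1,1,3}$ is the unique non-DEDV tree, and I would set $T_{7,1}:=T_{1,1,1,3}$.

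For two branching vertices a second degree count shows the two branching degrees are $(3,3)$ or $(4,3)$. In the $(4,3)$ case the budget $4+3+5\cdot 1=12$ is tight, forcing all remaining vertices to be leaves and hence $T=S_{4,3}$, which is DEDV by Lemma \ref{r-lem-2}. In the $(3,3)$ case $T$ is one of the trees of Proposition \ref{pro-4-4}, i.e.\ $P_{s_1}\bullet_u P_{s_2}+P_{uv}+P_{t_1}\bullet_v P_{t_2}$ with $s_1+s_2+t_1+t_2+k=6$; the only solutions (up to the convention $s_1+s_2\le t_1+t_2$) are $s_1=s_2=t_1=t_2=1,\,k=2$ and $s_1=s_2=t_1=1,\,t_2=2,\,k=1$. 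Testing conditions (i)--(iv) of Proposition \ref{pro-4-4}, the former satisfies (i) and is DEDV, while the latter satisfies none of (i)--(iv) and is therefore not DEDV; this second tree is $T_{7,1}'$.

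Finally I would assemble the conclusion. The two non-DEDV trees are exactly $T_{7,1}=T_{1,1,1,3}$ and $T_{7,1}'$, and a short direct computation gives $\mathbf{r}(T_{7,1})=\mathbf{r}(T_{7,1}')=(4,1,1)$; equivalently, $T_{7,1}'$ arises from $T_{7,1}$ by the branch-exchange of Theorem \ref{thm-3-2-1}, which guarantees simultaneously $T_{7,1}\approx T_{7,1}'$ and $T_{7,1}\not\cong T_{7,1}'$ (they have different numbers of branching vertices). Since the remaining nine trees are all DEDV-trees, none of them can lie in a non-isomorphic EDV-equivalent pair, and because a pair requires two non-DEDV members the only possibility is $\{T_{7,1},T_{7,1}'\}$. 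I expect the main obstacle to be the exhaustiveness of the enumeration rather than any single computation: the whole argument hinges on correctly verifying that the two degree-counting reductions leave no stray trees outside the scope of Proposition \ref{pro-4-4} (in particular the degree-$(4,3)$ double star and the boundary instances where conditions (i)--(iv) are delicate), so that precisely two trees escape the DEDV theorems.
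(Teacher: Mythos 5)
Your proposal is correct and takes essentially the same route as the paper, which justifies Proposition \ref{pro-3-1} in a single sentence before stating it (trees of order $7$ have at most two branching vertices; select those failing Theorem \ref{st-thm-1} and Proposition \ref{pro-4-4}); your write-up simply carries out that case analysis in full, including the pieces the paper leaves implicit ($k=3$ starlike trees via Proposition \ref{pro-s-3}, and the double star $S_{4,3}$). One small repair: Lemma \ref{r-lem-2} does not literally apply to $S_{4,3}$, since its hypothesis $p<\lfloor n/2\rfloor$ fails here ($\mu(uv)=3=\lfloor 7/2\rfloor$), so you should instead cite the paper's unqualified corollary that double stars are DEDV-trees, or note directly that any tree with $\mathbf{r}=(5,0,1)$ has five leaves and hence exactly two non-leaf vertices, and of the two such trees only $S_{4,3}$ has $r_3=1$.
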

Proposition \ref{pro-3-1} can also restate as that except of $T_{7,1}$ and $T_{7,1}'$ all the trees on $7$ vertices are DEDV-trees.
\begin{proposition}\label{pro-3-2}
There are exactly three pairs of non-isomorphic EDV-equivalent trees on $8$ vertices, which are labelled as $(T_{8,1}, T'_{8,1})$, $(T_{8,2}, T'_{8,2})$ and $(T_{8,3}, T'_{8,3})$ shown in Figure \ref{fig-3}, where $\mathbf{r}(T_{8,1})=\mathbf{r}(T'_{8,1})=(4,1,1,1)$, $\mathbf{r}(T_{8,2})=\mathbf{r}(T'_{8,2})=(4,2,1,0)$ and $\mathbf{r}(T_{8,3})=\mathbf{r}(T'_{8,3})=(5,1,1,0)$.
\end{proposition}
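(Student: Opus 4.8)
The plan is to turn the claim into a finite verification over the $23$ non-isomorphic trees of $\mathcal{T}_8$, using the DEDV criteria of Section 4 as a filter so that only a handful of trees need direct inspection. First I would bound the combinatorics: a degree-sum count shows that a tree on $8$ vertices has at most three branching vertices, since three vertices of degree $\ge 3$ already use $9$ of the degree sum $2(n-1)=14$, leaving just enough for five leaves and no vertex of degree $2$; this forces a single tree with three branching vertices, the caterpillar whose spine $b_1b_2b_3$ carries $2,1,2$ leaves, and excludes four or more branching vertices. The decisive logical lever is the definition of DEDV itself: a DEDV-tree shares its edge division vector with no other tree of $\mathcal{T}_n$, so a non-isomorphic EDV-equivalent pair can be built only from two non-DEDV trees. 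It therefore suffices to locate every non-DEDV tree in $\mathcal{T}_8$ and sort these by $\mathbf{r}$.

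I would then clear away the trees already certified DEDV by the earlier results. The path $P_8$ and the star $S_8$ are DEDV by the balanced-starlike corollary. Among the starlike trees (one branching vertex), Proposition \ref{pro-s-3} makes every $T_{l_1,l_2,l_3}$ a DEDV-tree, and Theorem \ref{st-thm-1} makes $T_{l_1,\ldots,l_k}$ with $k\ge4$ DEDV exactly when it is weak balanced; scanning the partitions of $7$ leaves precisely three non-weak-balanced starlike trees, namely $T_{1,1,1,4}$, $T_{1,1,2,3}$ and $T_{1,1,1,1,3}$, with vectors $(4,1,1,1)$, $(4,2,1,0)$ and $(5,1,1,0)$. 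For the two-branching-vertex trees of the shape of Proposition \ref{pro-4-4}, the conditions (i)--(iv) fail for exactly two of them, which I name $T_{8,1}$ with $\mathbf{r}=(4,1,1,1)$ and $T_{8,2}$ with $\mathbf{r}=(4,2,1,0)$. The double stars $S_{3,5}$, $S_{4,4}$ and the balanced double broom $DT_{1,2,2}$ are DEDV by the double-star corollary and by Corollary \ref{thm-3-4}.

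This leaves a short explicit list: the two-branching trees possessing a branching vertex of degree $\ge4$ with unequal pendant paths (the $(2,3)$-type configurations, which Proposition \ref{pro-4-4} does not govern) together with the unique three-branching caterpillar. For each I would read off $\mathbf{r}$ directly from the edge function $\mu$. This uncovers exactly one further non-DEDV tree, $T_{8,3}$ (two leaves at $u$, edge $uv$, and two leaves plus a path $P_2$ at $v$) with $\mathbf{r}=(5,1,1,0)$; the remaining trees in the list each land on a vector realized by no other tree (for instance $(5,1,0,1)$, $(5,0,1,1)$ and $(5,0,2,0)$ for the caterpillar) and so are DEDV by default. Collating all vectors then shows that precisely three are repeated, each by exactly two trees: $(4,1,1,1)$ by $T_{1,1,1,4}=T'_{8,1}$ and $T_{8,1}$; $(4,2,1,0)$ by $T_{1,1,2,3}=T'_{8,2}$ and $T_{8,2}$; and $(5,1,1,0)$ by $T_{1,1,1,1,3}=T'_{8,3}$ and $T_{8,3}$. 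In each pair one member is starlike and the other has two branching vertices, so the two are non-isomorphic, which yields exactly the three pairs asserted.

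The step I expect to be the main obstacle is ensuring completeness of the two-branching-vertex analysis, because Proposition \ref{pro-4-4} only handles the case $k_1=k_2=2$; I must independently enumerate the configurations with a higher-degree branching vertex and tabulate their vectors against all the others to confirm that $T_{8,3}$ is the only extra coincidence and that no vector is shared by three distinct trees. A secondary point of care is that any pendant path longer than $\lfloor n/2\rfloor=4$ (as in $T_{1,1,5}$ or $T_{1,2,4}$) must have its $\mu$-values computed as genuine minima rather than by a naive block formula; once the bookkeeping of the $23$ vectors is organized, the remaining verification is routine.
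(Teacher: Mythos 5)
Your proposal is correct and takes essentially the same approach as the paper: the paper likewise treats this as a finite verification over the $23$ trees of $\mathcal{T}_8$, filtered by the DEDV criteria of Section 4 (Theorem \ref{st-thm-1} and Proposition \ref{pro-4-4}), and then asserts that the surviving trees are exactly the three pairs of Figure \ref{fig-3}. If anything, your write-up is more careful than the paper's one-line justification, since you explicitly dispose of the two-branching-vertex trees having a vertex of degree at least $4$ (which lie outside the scope of Proposition \ref{pro-4-4} and are precisely where $T_{8,3}$ arises) and of the unique three-branching caterpillar, steps the paper leaves to ``one can verify''.
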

Proposition \ref{pro-3-2} can also restate as that except of $(T_{8,1}, T'_{8,1})$, $(T_{8,2}, T'_{8,2})$ and $(T_{8,3}, T'_{8,3})$ all the trees on $8$ vertices are DEDV-trees.

The pairs of non-isomorphic EDV-equivalent trees
on 7 and 8 vertices described in Proposition \ref{pro-3-1} and Proposition \ref{pro-3-2}. Similarly,
we can exhaust all the non-isomorphic EDV-equivalent trees on 9 vertices, which correspond to $11$ distinct  edge division vectors: $(4,1,1,2)$, $(4,1,2,1)$, $(4,2,1,1)$, $(4,2,2,0)$, $(5,1,1,1)$, $(5,1,2,0)$, $(5,2,0,1)$, $(5,2,1,0)$, $(6,1,1,0)$,  $(6,1,0,1)$, $(6,0,1,1)$.

\begin{proposition}
There exist only $11$  classes of the non-isomorphic EDV-equivalent trees on $9$ vertices, which are listed in the Figure \ref{fig-8}.
\end{proposition}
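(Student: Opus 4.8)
The plan is to carry out an exhaustive but organized classification of the $47$ trees of order $9$, sorting them by their number of branching vertices and using the structural theorems of Sections 3 and 4 to settle the DEDV status of each shape quickly. The key preliminary observation is that a tree on $9$ vertices has at most three branching vertices (a quick degree count: four vertices of degree $\ge 3$, connected in a subtree, already force at least $10$ vertices). Accordingly I would split the analysis into four regimes: the path $P_9$ (no branching vertex, DEDV by the corollary following Lemma \ref{r-lem-1}); starlike trees (one branching vertex); trees with exactly two branching vertices; and trees with exactly three branching vertices. For each regime the aim is the same: detect the trees that are \emph{not} DEDV-trees, since every such tree, together with the trees obtained from it by branch-exchange, produces a class of non-isomorphic EDV-equivalent trees sharing one common edge division vector $\mathbf{r}$.

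First I would dispose of the starlike trees $T_{l_1,\ldots,l_k}\in\mathcal{T}_9$. By Theorem \ref{st-thm-1}, for $k\ge 4$ such a tree is DEDV if and only if it is weak balanced, i.e.\ $l_i+l_j\ge\max_{q}l_q$; the finitely many partitions of $8=l_1+\cdots+l_k$ with $k\ge 4$ that violate this are easy to list, and for each the branch-exchange of Theorem \ref{thm-3-2-1} produces the non-isomorphic partner together with its shared vector. For $k=3$, Proposition \ref{pro-s-3} guarantees that every $T_{l_1,l_2,l_3}$ is DEDV, so the starlike trees contribute only a controlled set of failures. Next I would treat the trees with exactly two branching vertices $u,v$: these are the trees $P_{s_1}\bullet_uP_{s_2}+P_{uv}+P_{t_1}\bullet_vP_{t_2}$ and their mild generalizations, and Proposition \ref{pro-4-4} (supplemented by Theorem \ref{D-thm-1} for the equivalent double-starlike shapes) names precisely which are DEDV and which are not. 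In each non-DEDV instance the branch-exchange of Lemma \ref{lem-2-3} again exhibits the partner, and I would record the resulting $\mathbf{r}$.

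The remaining work is the three-branching-vertex trees, of which there are only a handful on $9$ vertices; here no single earlier theorem applies verbatim, so I would compute $\mathbf{r}$ directly for each and test for collisions, confirming that each belongs to a class already found or founds a new class via branch-exchange, using Corollary \ref{cor-3-2-0} and Corollary \ref{cor-3-2-2} to certify non-isomorphism of the exchanged trees. Having catalogued all failures, I would collate them by edge division vector and verify that exactly the eleven vectors $(4,1,1,2),(4,1,2,1),(4,2,1,1),(4,2,2,0),(5,1,1,1),(5,1,2,0),(5,2,0,1),(5,2,1,0),(6,1,1,0),(6,1,0,1),(6,0,1,1)$ occur, each realized by the non-isomorphic trees drawn in Figure \ref{fig-8}. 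Throughout, Claim \ref{r-claim-1}---in particular the constraint $\sum_i r_i = 8$ together with the leaf count $r_1$ and the pendent-$P_2$ count $r_2$---is used to prune candidate vectors and to force the coarse shape of any tree realizing a given vector.

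The main obstacle I anticipate is completeness rather than any individual deduction: one must be certain that no tree of order $9$ has been overlooked, and that for each of the eleven vectors the listed trees are pairwise non-isomorphic and are the \emph{only} trees carrying that vector. The structural theorems reduce most of this to finite partition bookkeeping, but the three-branching-vertex trees and the verification that no additional edge division vector is shared must be checked by hand; it is exactly here that the separation between EDV-equivalence ($\approx$) and isomorphism ($\cong$), supplied by Theorem \ref{thm-3-2-1} and its corollaries, does the decisive work.
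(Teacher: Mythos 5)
Your overall strategy (organize the $47$ trees of order $9$ by their number of branching vertices, settle most regimes with the structural results of Sections 3--4, then collate by edge division vector) is sensible, and your degree count showing that a tree on $9$ vertices has at most three branching vertices is correct. But there is a genuine gap in your middle regime: you claim that Proposition \ref{pro-4-4}, ``supplemented by Theorem \ref{D-thm-1},'' names precisely which two-branching-vertex trees are DEDV. It does not. Proposition \ref{pro-4-4} applies only when \emph{both} branching vertices have degree exactly $3$ (each carries exactly two pendent paths), and Theorem \ref{D-thm-1} is merely a \emph{sufficient} condition, and only for the equivalent double starlike trees $DT_{s,k_1,k_2}$ with $|k_1-k_2|\le 1$; neither result decides, say, a tree whose two branching vertices have degrees $4$ and $3$, nor $DT_{1,2,4}$, where $|k_1-k_2|=2$. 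This is not a peripheral omission. A tree with exactly two branching vertices of degrees $d_1,d_2$ has $d_1+d_2-2$ leaves, so every class in Figure \ref{fig-8} whose vector has $r_1\in\{5,6\}$ --- that is $(5,1,1,1)$, $(5,1,2,0)$, $(5,2,0,1)$, $(5,2,1,0)$, $(6,1,1,0)$, $(6,1,0,1)$, $(6,0,1,1)$, seven of the eleven classes --- consists of trees which are either starlike, or have a branching vertex of degree at least $4$, or have three branching vertices; only the starlike members fall under a cited theorem. (Concretely, $DT_{1,2,4}$ realizes $(6,0,1,1)$ and is one of the pair $T_{9,9},T'_{9,9}$, yet none of your named results applies to it.) Followed literally, your plan leaves the majority of the classes unclassified or tacitly misdecided: the direct computation you reserve for the three-branching-vertex trees must in fact be extended to all non-starlike trees containing a vertex of degree at least $4$.

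It is worth noting how this compares with the paper itself. The paper offers no structural proof of this proposition: as stated in Section 6 (see also Table \ref{tab-4}), the classification for $n=9$ and $n=10$ was obtained by generating all trees, computing their edge division vectors, and comparing, with the branch-exchange machinery (Theorem \ref{thm-3-2-1} and its corollaries) serving mainly to explain a posteriori how each equivalent pair arises. A corrected version of your argument --- with Proposition \ref{pro-s-3}, Theorem \ref{st-thm-1} and Proposition \ref{pro-4-4} used strictly within their hypotheses, and everything else checked directly --- would therefore be a more informative proof than the one the paper gives; as written, however, it does not go through.
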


\begin{figure}[h]
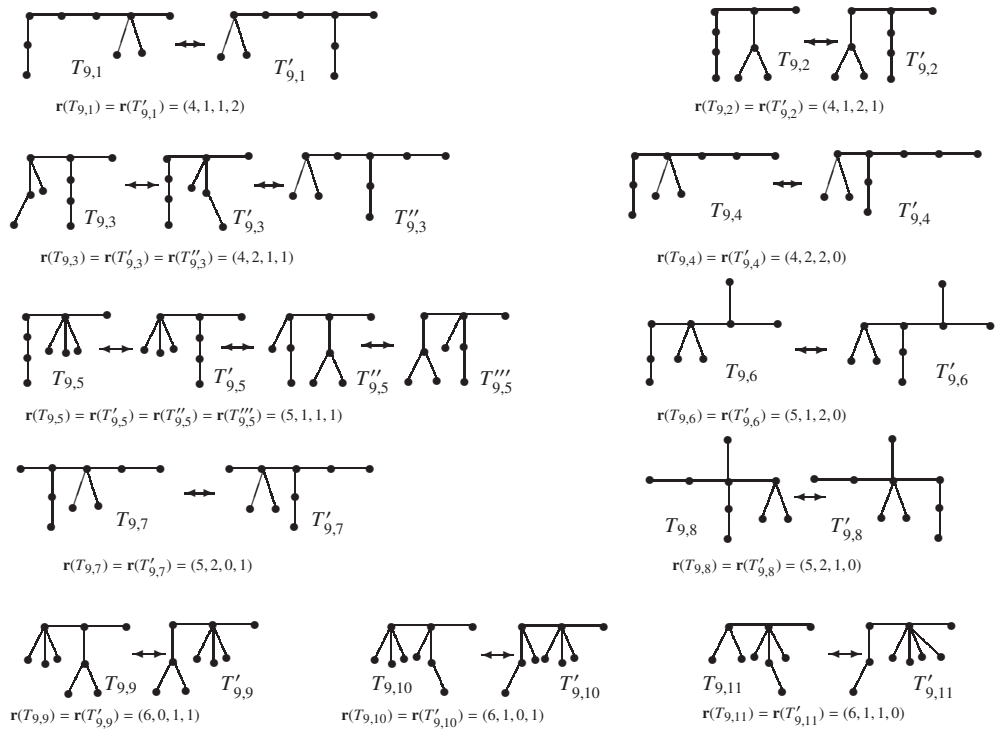

  \centering
\unitlength 1mm 
\linethickness{0.4pt}
\ifx\plotpoint\undefined\newsavebox{\plotpoint}\fi 

  \caption{All the pairs of non-isomorphic EDV-equivalent trees on $9$ vertices}\label{fig-8}
\end{figure}

From Figure \ref{fig-8} we see that there are four non-isomorphic trees corresponding to one edge division vectors $(5,1,1,1)$. Additionally, we mention that except of the 11 classes trees list in Figure \ref{fig-8} all the trees on $9$ vertices are DEDV-trees.

\begin{remark}
There are $106$ connected trees on $10$ vertices,  $59$ of which can be divided into $25$ classes based on edge division vector that are EDV-equivalent trees, and the remaining $47$ trees are DEDV-trees (see Appendix A).
\end{remark}

Let $u$ and $v$ be two vertices of $T$, recall that call $u$ and $v$  similar if there is  an automorphism $\alpha$ of $T$  that contains  the transposition $(u\ v)$ (i.e., $\alpha(u)=v$ and $\alpha(v)=u$), and not similar otherwise. In the following result, we  give a simple method of constructing non-isomorphic EDV-equivalent trees that is a special case of the branch-exchange.

\begin{theorem}\label{thm-3-1}
Let $T$ and  $T'=T-ux+vx$ be assumed as in Lemma \ref{lem-2-2} and $n_u(uv)-n_v(uv)=|T_x(ux)|$. Let $T^*=T-ux-T_x(ux)$. If $u$ and $v$ are not similar vertices of $T^*$ then $T\approx T'$ but $T\not\cong T'$.
\end{theorem}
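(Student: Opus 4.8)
The plan is to split the statement into the relation part $T\approx T'$ and the non-isomorphism part $T\not\cong T'$, handling the first directly and reducing the second to the branch-exchange machinery already proved. For $T\approx T'$ I would just invoke Lemma~\ref{lem-2-2}: since $n_u(uv)-n_v(uv)=|T_x(ux)|\ge 1$ we are in case (2) with $n_u(uv)>n_v(uv)$, and the equality $n_u(uv)-n_v(uv)=|T_x(ux)|$ is precisely the third subcase, which yields $T\approx T'$. So this part needs no new work; all the substance lies in $T\not\cong T'$.

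The first step for the hard part is to record the arithmetic meaning of the hypothesis. Writing $m=|T_x(ux)|$ and $h=n_v(uv)$, the equation $n_u(uv)-n_v(uv)=m$ says exactly that in $T^{*}=T-ux-T_x(ux)$ the edge $uv$ splits $T^{*}$ into two halves $A=T^{*}_u(uv)$ and $B=T^{*}_v(uv)$ of equal size $h$; equivalently the total size of the branches of $A$ at $u$ equals that of the branches of $B$ at $v$. This balance is what lets me reinterpret the (unbalanced) branch-move as a balanced branch-exchange. Concretely, let $T_{S(u)}$ be the branches of $A$ at $u$ and $T_{S(v)}$ the branches of $B$ at $v$; these are balanced, so Definition~\ref{def-3-2} produces a branch-exchange $\tilde T$ of $T$. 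The key check is that $\tilde T\cong T'$: in $\tilde T$ the vertex $u$ carries $T_x(ux)$ together with the branches of $B$, while $v$ carries the branches of $A$, and this is exactly $T'$ after relabelling the two endpoints of $uv$ (the isomorphism is the ``flip'' of the edge $uv$). Hence $T\cong T'$ if and only if $T\cong\tilde T$, and $T\approx\tilde T$ holds by Lemma~\ref{lem-2-3}.

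Next I would apply Theorem~\ref{thm-3-2-1} to the exchange $T\to\tilde T$, whose two hypotheses must be verified against its residual tree $\widehat T=T-T_{S(u)}-T_{S(v)}$ (the object playing the role of ``$T^{*}$'' in Lemma~\ref{thm-3-2-0}), namely the small tree consisting of $T_x(ux)$ attached at $u$ together with the pendant edge $uv$. In $\widehat T$ one has $\deg(u)=2>1=\deg(v)$, so $u$ and $v$ can never be similar in $\widehat T$; this disposes of one hypothesis for free. For the other I would prove the equivalence ``$T_{S(u)}\simeq T_{S(v)}$ iff $u$ and $v$ are similar in $T^{*}$'': a strong isomorphism of the two branch-forests assembles into a strong isomorphism $A_u\simeq B_v$, which extends to an automorphism of $T^{*}$ interchanging $u$ and $v$ while fixing the edge $uv$, and conversely any automorphism of $T^{*}$ realizing the transposition $(u\ v)$ must carry $A$ onto $B$. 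Thus the standing hypothesis ``$u,v$ not similar in $T^{*}$'' is exactly ``$T_{S(u)}\not\simeq T_{S(v)}$'', and Theorem~\ref{thm-3-2-1} then gives $T\not\cong\tilde T$, whence $T\not\cong T'$.

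The main obstacle I anticipate is the bookkeeping in this reduction: verifying rigorously that the balanced branch-exchange $\tilde T$ really is isomorphic to the branch-move $T'$, and translating the hypothesis stated in terms of $T^{*}$ into the condition $T_{S(u)}\not\simeq T_{S(v)}$ demanded by Theorem~\ref{thm-3-2-1}. Both reduce to carefully tracking which branches sit at $u$ and at $v$ before and after each transformation; once the balance $|A|=|B|$ is in hand these identifications are routine, so the whole argument is short modulo this careful matching.
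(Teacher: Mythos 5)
Your proposal is correct, and it proves the theorem by a genuinely different route than the paper. The paper argues directly: assuming an isomorphism $\alpha:T\to T'$ exists, it asserts that $\alpha$ must fix the edge $uv$, then uses the size count $|T_u(uv)|=|T'_v(uv)|>|T_v(uv)|=|T'_u(uv)|$ to force $\alpha(T_v(uv))=T'_u(uv)$, which produces an automorphism of $T^*$ containing the transposition $(u\ v)$ and contradicts the hypothesis. You instead reduce everything to the branch-exchange machinery of Section 3: the balance $|T^*_u(uv)|=|T^*_v(uv)|$ supplied by $n_u(uv)-n_v(uv)=|T_x(ux)|$ lets you exchange all branches at $u$ other than $T_x(ux)$ with all branches at $v$, and the resulting tree $\tilde T$ is isomorphic to $T'$ via the flip of the edge $uv$ --- this identification is the crux of your argument, and it checks out, since the map swapping $u$ and $v$ and fixing every other vertex carries the edges $ux,\,uy_j,\,vx_i$ of $\tilde T$ to the edges $vx,\,vy_j,\,ux_i$ of $T'$. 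Your two hypothesis checks for Theorem \ref{thm-3-2-1} are likewise sound: $u,v$ can never be similar in the residual tree $\widehat T$ because $d_{\widehat T}(u)=2>1=d_{\widehat T}(v)$, and $T_{S(u)}\simeq T_{S(v)}$ holds if and only if $u,v$ are similar in the theorem's $T^*$, so the stated hypothesis translates exactly into $T_{S(u)}\not\simeq T_{S(v)}$. As for what each approach buys: the paper's direct proof is shorter and self-contained, but its pivotal claim that any isomorphism must fix the edge $uv$ is stated without justification; your reduction encapsulates precisely that kind of analysis inside the already-proved Lemma \ref{thm-3-2-0}, leaving no step hanging, and it makes literal the paper's own remark that this theorem is ``a special case of the branch-exchange.'' In a final write-up you might add one sentence on the degenerate case $n_v(uv)=1$, where $S(u)=S(v)=\emptyset$: there $T^*$ is a single edge, so $u$ and $v$ are similar in $T^*$ and the hypothesis excludes this case, consistently with your equivalence.
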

\begin{proof}
Since  $n_u(uv)-n_v(uv)=|T_x(ux)|$, we have $T\approx T'$ from Lemma \ref{lem-2-2}(2). Also note that $T=T^*+xu+T_x(ux)$ and $T'=T^*+xv+T_x(ux)$, we see that the two components $T^*_u(uv)$ and $T^*_v(uv)$  have the same number of vertices, i.e., $|T^*_u(uv)|=n_u(uv)-|T_x(ux)|=n_v(uv)=|T^*_v(uv)|$. If $T\cong T'$ then there exists an isomorphism $\alpha$ such that $\alpha(T)=T'$. It implies that $\alpha$ must fix the edge $uv$ (i.e., $\alpha$ maps the edge $uv$ of $T$ to the edge $uv$ of $T'$). Therefore,  $\alpha(T_u(uv))=T'_v(uv)$ and $\alpha(T_v(uv))=T'_u(uv)$ because of $|T_u(uv)|=|T_v(uv)|+|T_x(ux)|=|T'_v(uv)|>|T_v(uv)|=|T_u(uv)|-|T_x(ux)|=|T'_u(uv)|$.
Since $T_v(uv)=T^*_v(uv)$ and $T'_u(uv)=T^*_u(uv)$, we have $\alpha(T^*_v(uv))=T^*_u(uv)$. This proves that $u$ and $v $ are similar vertices of $T^*$, a contradiction. Therefore, $T\not\cong T'$.
\end{proof}

According to Theorem \ref{thm-3-1}, we can give a method of constructing non-isomorphic EDV-equivalent trees. Given any tree $T^*$ with one specified edge $uv$, we construct the trees $T$ and $T'$ shown in Figure \ref{fig-0}. If $T^*$ satisfies the following conditions:\\
(a) $u$ is not similar with $v$ in $T^*$,\\
(b) $|T^*_u(uv)|= |T^*_v(uv)|$,\\
we have $T\approx T'$ but $T\not\cong T'$. It leads the following conclusion.

\begin{proposition}
There exist infinite pairs of EDV-equivalent trees that are not isomorphic such that they contain any given tree $T^*$ as their subtree.
\end{proposition}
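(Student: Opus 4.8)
The plan is to reduce everything to the construction recorded just above, which rests on Theorem~\ref{thm-3-1}: once we have a base tree $T^*$ carrying an edge $uv$ that is balanced in the sense of condition~(b), $|T^*_u(uv)|=|T^*_v(uv)|$, and non-similar in the sense of condition~(a), $u$ and $v$ not similar in $T^*$, then attaching a branch $T_x(ux)$ of any size at $u$ and moving it to $v$ produces a pair $T,T'$ with $T\approx T'$ and $T\not\cong T'$, both containing $T^*$. So the statement will follow from two steps: first embed an arbitrarily prescribed tree into such a base $T^*$, and then vary the attached branch to obtain infinitely many pairs.

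First I would produce the base tree. Let $F$ be the given tree of order $m$. I build $T^*$ by joining two rooted trees along an edge $uv$: take the $u$-side to be a rooted tree $L_u$ that contains $F$ (for instance $L=F$ with $u$ any vertex of $F$, after attaching a pendant path when necessary so that the order reaches at least $3$), and choose the $v$-side $R_v$ to be any rooted tree with $|R|=|L|$ and $R_v\not\simeq L_u$. Setting $T^*=L\cup R+uv$ gives $|T^*_u(uv)|=|L|=|R|=|T^*_v(uv)|$, so (b) holds; moreover $F\subseteq L\subseteq T^*$.

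Next I would verify condition~(a) and then generate the infinite family. If some $\alpha\in Aut(T^*)$ contained the transposition $(u\ v)$, it would fix the edge $uv$ and hence restrict to a strong isomorphism $L_u\to R_v$; since we chose $R_v\not\simeq L_u$, no such $\alpha$ exists, so $u$ and $v$ are not similar in $T^*$. Now, for each integer $k\ge 1$ let the branch $T_x(ux)$ be a path on $k$ vertices with endpoint $x$ adjacent to $u$, form $T_k=T^*+ux+T_x(ux)$ as in Figure~\ref{fig-0}, and put $T_k'=T_k-ux+vx$. Because $|T^*_u(uv)|=|T^*_v(uv)|$, we automatically get $n_u(uv)-n_v(uv)=|T_x(ux)|$, so Theorem~\ref{thm-3-1} applies and yields $T_k\approx T_k'$ with $T_k\not\cong T_k'$. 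Both trees contain $T^*$ and hence $F$, and since $|T_k|=|T^*|+k$ the orders are pairwise distinct, so the pairs $\{T_k,T_k'\}$ are pairwise distinct, giving infinitely many non-isomorphic EDV-equivalent pairs all containing $F$.

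The only genuine obstacle is the existence of the $v$-side $R_v$ with $|R|=|L|$ and $R_v\not\simeq L_u$, that is, the existence of two non-isomorphic rooted trees of equal order; this holds whenever $|L|\ge 3$, which is exactly why the $u$-side is padded up to order at least $3$ when $F$ is too small, while all of $F$ is retained inside $L$. The accompanying point that deserves a short argument is the translation between condition~(a) and the rooted-isomorphism type of the two sides, and this is precisely the fixed-edge observation used above; everything else is a direct application of Theorem~\ref{thm-3-1}.
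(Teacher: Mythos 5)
Your proof is correct and takes essentially the same route as the paper: both rest on Theorem~\ref{thm-3-1}, applied to a base tree having an edge $uv$ with $|T^*_u(uv)|=|T^*_v(uv)|$ and $u,v$ not similar, with the attached branch varied (here, paths of every length) to generate infinitely many non-isomorphic EDV-equivalent pairs containing the prescribed tree. You are in fact more careful than the paper, which leaves implicit both the step of embedding an arbitrary given tree into a base tree satisfying conditions (a) and (b) and the observation that non-strong-isomorphism of the two rooted sides forces condition (a).
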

\begin{remark}
It is an interesting problem  to find the necessary and sufficient condition of the transformation for a tree $T$  such that it can produce all the trees from $T$ that are EDV-equivalent with $T$ but not isomorphic to it.
\end{remark}

We examine trees having the same edge division vector in Figure \ref{fig-3} and Figure \ref{fig-8} and find that each one can be obtained from another by using the  branch-exchange transformation. We now propose a problem.

\begin{problem}
Let $T$ be a tree and $\mathcal{B}(T)$, the set of trees that can be obtained from $T$ by using branch-exchange transformation. Then $T$ is DEDV-tree if and only if $\mathcal{B}(T)$ does not contain any tree except of $T$ itself.
\end{problem}

\section{Topological indices on trees}
The first topological index, named Wiener index, was introduced in \cite{Wiener} and is defined as  $W(G)=\sum_{\{u,v\}\in V(G)}d(u,v)$. It was already known to Wiener that on the class of  trees Wiener index  can be represented by a function of $\mu(e)$ as follows:
\begin{equation}\nonumber\begin{array}{ll}
W(T)&=\sum_{e=xy\in E(T)}n_x(e)n_y(e)=\sum_{e=xy\in E(T)}n_x(e)(n-n_x(e))\\
&=\sum_{e\in E(T)}\mu(e)(n-\mu(e)).
\end{array}
\end{equation}

Let $\mathbf{r}=(r_1,r_2,\ldots,r_{\lfloor\frac{n}{2}\rfloor})$ be the edge division vector of $T$ and $f(x)=x(n-x)$. As in the proof of Theorem 8 in \cite{Vuki}, $W(T)$ can be further simplified as
\begin{equation}\label{WF-eq-22}W(T)=\sum_{e\in E(T)}\mu(e)(n-\mu(e))=\sum_{1\le i\le \lfloor\frac{n}{2}\rfloor}r_if(i).\end{equation}
The authors in \cite{Vuki} introduced the notions bellow.

\begin{definition}\label{def-3-1}
Let $F:\mathcal{T}_n\rightarrow \mathbb{R}$ be a topological index and let $f:\mathbb{N}\rightarrow \mathbb{R}$ be a real function defined for positive integers. The  topological index $F$ is an edge additive eccentric topological index if it holds that
$F(T)=\sum_{e\in E(T)}f(\mu(e))=\sum_{1\le i\le \lfloor\frac{n}{2}\rfloor}r_if(i)$.
Function $f$ is called the edge contribution function of index $F$.
\end{definition}

In \cite{Vuki}, the authors showed that several well known topological indices on the class of trees can be also  represented by some functions of $\mu(e)$ described as Eq. (\ref{WF-eq-22}), and they are edge additive eccentric topological indices.
They are Wiener index \cite{Wiener}, modified Wiener indices \cite{Gutman-1}, variable Wiener indices \cite{Vukic-1} and Steiner $k$-Wiener index \cite{Li-1}. Recently, Song and Huang et al. \cite{Song} extended the above conclusions to the more topological indices: hyper-Wiener index \cite{Rand}, Wiener-Hosoya index \cite{Randic}, degree distance \cite{Klein}, Gutman index \cite{Gutman} and  second atom-bond connectivity index \cite{Graovac}. All these topological indices and their edge contribution functions are summarized in the Table \ref{tab-1}.

\begin{table}[htbp]
\caption{Some edge additive eccentric topological indices \label{tab-1}}
\centering
\scriptsize
\begin{tabularx}{430pt}{c|c|c}
\toprule
Indices & Definition & Edge contribution function    \\
\midrule
\makecell[t]{  Wiener  index } &
\makecell[t]{ $W(T)=\sum_{\{u,v\}\in V(T)}d(u,v)$} &
\makecell[t]{ $f(x)=x(n-x)$} \\
\midrule
\makecell[t]{ Modified  Wiener  indices } &
\makecell[t]{ $^{\lambda}W(T)=\sum_{\{u,v\}\in V(T)}d^{\lambda}(u,v)$} &
\makecell[t]{ $f(x)=x^\lambda(n-x)^\lambda$} \\
\midrule
\makecell[t]{ Variable  Wiener  indices } &
\makecell[t]{ $_{\lambda}W(T)=\frac{1}{2}\sum_{e=uv\in E(T)}(n^{\lambda}-{n_u(e)}^{\lambda}-{n_v(e)}^{\lambda})$} &
\makecell[t]{$f(x)=n^\lambda-x^\lambda-(n-x)^\lambda$ } \\
\midrule
\makecell[t]{ Steiner $k$-Wiener  index } &
\makecell[t]{ $SW_k(T)=\sum_{e=uv\in E(T)}\sum_{i=1}^{k-1}\binom{n_u(e)}{i}\binom{n_v(e)}{k-i}$} &
\makecell[t]{$f(x)=\binom{n}{k}-\binom{x}{k}-\binom{n-x}{k}$}   \\
\midrule
\makecell[t]{ hyper-Wiener  index } &
\makecell[t]{ $WW(T)=\sum_{e=uv\in E(T)}(\frac{1}{2}n_u(e)n_v(e)+\frac{1}{2}n_u(e)^2n_v(e)^2)$} &
\makecell[t]{$f(x)=\frac{1}{2}x(n-x)+\frac{1}{2}x^2(n-x)^2$}   \\
\midrule
\makecell[t]{ Wiener-Hosoya  index } &
\makecell[t]{ $h(T)=\sum_{e=uv\in E(T)}[n_u(e)n_v(e)+(n_u(e)-1)(n_v(e)-1)]$} &
\makecell[t]{$f(x)=x(n-x)+(x-1)(n-x-1)$}  \\
\midrule
\makecell[t]{ degree  distance } &
\makecell[t]{ $D'(T)=\sum_{e=uv\in E(T)}(4n_u(e)n_v(e)-n)$} &
\makecell[t]{$f(x)=4x(n-x)-n$}  \\
\midrule
\makecell[t]{ Gutman  index } &
\makecell[t]{ $Gut(T)=\sum_{e=uv\in E(T)}[4n_u(e)n_v(e)-(2n-1)]$} &
\makecell[t]{$f(x)=4x(n-x)-(2n-1)$}  \\
\midrule
\makecell[t]{ second  atom-bond \\ connectivity  index } &
\makecell[t]{ $ABC_2(T)=\sum_{e=uv\in E(T)}\sqrt{\frac{n-2}{n_u(e)(n-n_u(e))}}$} &
\makecell[t]{$f(x)=\sqrt{n-2}x^{-\frac{1}{2}}(n-x)^{-\frac{1}{2}}$}  \\
\bottomrule
\end{tabularx}
\end{table}

Instead of dealing with the extremal problem for individual topological index one by one, we can use the concept of edge additive eccentric topological index to unify the problem of determining  the extreme topological index in certain classes of trees. In the literature \cite{Song} the authors gave and summarized all the results for the extremal  topological indices involved in Table \ref{tab-1}. Here, at the last of the paper, we turn to consider whether trees with different structures can have the same topological indices? Based on the relation of $\langle \mathcal{T}_n,\preceq\rangle$ and the notion of  edge additive eccentric topological index, the following results give us a way to solve this problem.

\begin{theorem}\label{thm-6-1}
Let $F:\mathcal{T}_n\longrightarrow \mathbb{R}$ be an edge additive eccentric topological index and let $T,T' \in \mathcal{T}_n$. If $T\approx T'$, then $F(T)=F(T')$.
\end{theorem}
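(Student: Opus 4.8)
The plan is to reduce everything to Definition~\ref{def-3-1}, which already expresses an edge additive eccentric topological index as a linear functional of the edge division vector. Concretely, since $F$ is such an index with edge contribution function $f$, its defining property gives $F(T)=\sum_{1\le i\le \lfloor\frac{n}{2}\rfloor}r_i(T)f(i)$ and, applied to $T'$, likewise $F(T')=\sum_{1\le i\le \lfloor\frac{n}{2}\rfloor}r_i(T')f(i)$. So the whole argument is to write both index values in this common summation form and then compare them coefficient by coefficient.

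The key (and essentially only) observation I would invoke is the meaning of the relation $\approx$: by the definition of $\langle\mathcal{T}_n,\preceq\rangle$, we have $T\approx T'$ precisely when $\mathbf{r}(T)=\mathbf{r}(T')$, that is $r_i(T)=r_i(T')$ for every $i=1,\dots,\lfloor\frac{n}{2}\rfloor$. Substituting these equalities term by term into the two sums above yields
\[
F(T)=\sum_{1\le i\le \lfloor\frac{n}{2}\rfloor}r_i(T)f(i)=\sum_{1\le i\le \lfloor\frac{n}{2}\rfloor}r_i(T')f(i)=F(T'),
\]
which is exactly the claim. Note that no property of $f$ beyond its being well defined on $\{1,\dots,\lfloor\frac{n}{2}\rfloor\}$ is needed, so the conclusion holds uniformly for every index listed in Table~\ref{tab-1} at once.

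There is essentially no obstacle here: the theorem is an immediate consequence of the fact that an edge additive eccentric index depends on the tree only through its edge division vector, while $\approx$ asserts that this vector coincides for $T$ and $T'$. The genuine content of the paper lies not in this implication itself but in its constructive use --- combined with the branch-exchange machinery (Lemma~\ref{lem-2-3}, Theorem~\ref{thm-3-2-1} and Corollary~\ref{cor-3-2-0}), which manufactures non-isomorphic trees satisfying $T\approx T'$ --- to exhibit infinite families of non-isomorphic trees that share the value of every topological index in Table~\ref{tab-1} simultaneously.
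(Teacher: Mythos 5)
Your proof is correct and follows essentially the same route as the paper's: both write $F(T)$ and $F(T')$ as $\sum_{1\le i\le \lfloor\frac{n}{2}\rfloor}r_i f(i)$ via Definition~\ref{def-3-1} and then use the fact that $T\approx T'$ means $\mathbf{r}(T)=\mathbf{r}(T')$ to conclude the two sums coincide. Nothing is missing; this is exactly the paper's argument.
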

\begin{proof}
Since $F$ is an edge additive eccentric topological index, we know that it is defined by
$$F(T)=\sum_{e\in E(T)}f(\mu(e)),$$
where $f$ is its edge contribution function. Let $\mathbf{r}=(r_1,r_2,\ldots,r_{\lfloor\frac{n}{2}\rfloor})$ be the edge division vector of $T$ and $\mathbf{r'}=(r'_1,r'_2,\ldots,r'_{\lfloor\frac{n}{2}\rfloor})$ be the edge division vector of $T'$. By the definition of topological index we have
\[\left\{\begin{array}{ll}
F(T)=\sum_{1\le i\le \lfloor\frac{n}{2}\rfloor}r_if(i),\\
F(T')=\sum_{1\le i\le \lfloor\frac{n}{2}\rfloor}r'_if(i).\\
\end{array}\right.
\]
Note that $\mathbf{r}=\mathbf{r'}$ since $T\approx T'$. Therefore, we have $F(T)=F(T')$.
\end{proof}

Theorem \ref{thm-6-1} shows that non-isomorphic EDV-equivalent trees have the same topological index value, which does not depend on individual form of topological index.
\begin{table}[htb]
\footnotesize
\caption{\footnotesize The values of topological index of all non-isomorphic EDV-equivalent trees of order $7,8,$ and $9$}\label{tab-3}
\centering
\begin{tabular*}{15cm}{p{20pt}p{90pt}p{90pt}p{50pt}p{50pt}p{50pt}}
\hline
 $n$ & trees & edge division vector & $W(\cdot)$ & $h(\cdot)$ & $Gut(\cdot)$ \\\hline
7 & $T_{7,1},T'_{7,1}$ & $(4,1,1)$     &46     &56 &106\\
8 & $T_{8,1},T'_{8,1}$& $(4,1,1,1)$    &71     &93 &179\\
8 & $T_{8,2},T'_{8,2}$& $(4,2,1,0)$    &67     &85 &163 \\
8 & $T_{8,3},T'_{8,3}$& $(5,1,1,0)$    &62     &75 &143\\
9 & $T_{9,1},T'_{9,1}$& $(4,1,1,2)$    &104     &144 &280 \\
9 & $T_{9,2},T'_{9,2}$& $(4,1,2,1)$    &102     &140 &272\\
9 & $T_{9,3},T'_{9,3},T''_{9,3}$& $(4,2,1,1)$    &98      &132 &256 \\
9 & $T_{9,4},T'_{9,4}$& $(4,2,2,0)$    &96     &128 &248\\
9 & $T_{9,5},T'_{9,5},T''_{9,5},T'''_{9,5}$& $(5,1,1,1)$    &92  &120 &232    \\
9 & $T_{9,6},T'_{9,6}$& $(5,1,2,0)$    &90     &116 &224\\
9 & $T_{9,7},T'_{9,7}$& $(5,2,0,1)$    &88      &112 &216 \\
9 & $T_{9,8},T'_{9,8}$& $(5,2,1,0)$    &86     &108 &208\\
9 & $T_{9,9},T'_{9,9}$& $(6,0,1,1)$    &86      &108 &208 \\
9 & $T_{9,10},T'_{9,10}$& $(6,1,0,1)$    &82     &100 &192\\
9 & $T_{9,11},T'_{9,11}$& $(6,1,1,0)$    &80    &96 &184  \\\hline
\end{tabular*}
\end{table}

Notice that the pairs of non-isomorphic EDV-equivalent trees on $7$, $8$ and $9$ vertices along with their edge division vectors are described in Figure \ref{fig-3} and Figure \ref{fig-8} (other trees that are not depicted are DEDV-trees), from which we give  Table \ref{tab-3} as an example of application  that list the values of various topological indices  of these non-isomorphic EDV-equivalent trees. From Table \ref{tab-3} we see that pairs of these non-isomorphic EDV-equivalent trees have the same value of topological index although the value depending on individual form of topological index would be varied.

\begin{remark}
From Table \ref{tab-3} we know that the EDV-equivalent trees have the same value of topological index, and sometimes the trees with different edge division vectors can also have the same topological index, for example the trees $T_{9,8},T'_{9,8}$ and $T_{9,9},T'_{9,9}$ given in Table \ref{tab-3}.
\end{remark}

\begin{table}[htb]
\footnotesize
\caption{\footnotesize Fractions of DEDV trees and EDV-equivalent trees}\label{tab-4}
\centering
\begin{tabular*}{15cm}{p{15pt}p{40pt}p{60pt}p{100pt}p{60pt}p{90pt}}
\hline
 $n$ & $\#$ trees & $\#$ DEDV trees& $\#$ EDV-equivalent trees & DEDV trees & EDV-equivalent trees \\\hline
2  & 1   & 1     &0      &1         &0      \\
3  & 1   & 1     &0      &1         &0          \\
4  & 2   & 2     &0      &1         &0          \\
5  & 3   & 3     &0      &1         &0         \\
6  & 6   & 6     &0      &1         &0         \\
7  & 11  & 9     &2      &0.8181    &0.1818     \\
8  & 23  & 17    &6      &0.7391    &0.2609     \\
9  & 47  & 22    &25     &0.4681    &0.5319     \\
10 & 106 & 47    &59     &0.4434    &0.5566    \\\hline
\end{tabular*}
\end{table}

To determine the all pairs of EDV-equivalent trees  we first of all had to generate the all connected trees by computer and then determine their edge division vectors. These would have to be stored and then compared. For example, the trees on $7$, $8$ and $9$ vertices are described in Section 5, and there are $106$ trees on $10$ vertices, $59$ of which are divided into $25$ classes based on edge division vector that are EDV-equivalent trees, and the remaining $47$ trees are DEDV-trees (see Appendix A). The results are in Table \ref{tab-4}, where we give the fractions of EDV-equivalent trees and DEDV-trees. Notice that for $n\leq 6$ there are no EDV-equivalent trees, all of them are DEDV-trees. An interesting result from the table is that the fraction of EDV-equivalent trees is nondecreasing for small $n$. If this tendency continues, almost all trees will be the EDV-equivalent pairs in the table. Indeed, the fraction of trees that are DEDV-trees tends to zero as $n$ tends to infinity. The conclusion may be that the present data give some indication that, the fraction of non-isomorphic EDV-equivalent pairs tends to one as $n$ tends to infinity.

\appendix
\section{All DEDV-trees on $10$ vertices}

\begin{figure}[htb]
\centering
\unitlength .9mm 
\linethickness{0.4pt}
\ifx\plotpoint\undefined\newsavebox{\plotpoint}\fi 

\end{figure}

\newpage

\end{document}